\def\E{\ifmmode{\mathbb E}\else{$\mathbb E$}\fi} 
\def\N{\ifmmode{\mathbb N}\else{$\mathbb N$}\fi} 
\def\R{\ifmmode{\mathbb R}\else{$\mathbb R$}\fi} 
\def\Q{\ifmmode{\mathbb Q}\else{$\mathbb Q$}\fi} 
\def\C{\ifmmode{\mathbb C}\else{$\mathbb C$}\fi} 
\def\Z{\ifmmode{\mathbb Z}\else{$\mathbb Z$}\fi} 
\def\P{\ifmmode{\mathbb P}\else{$\mathbb P$}\fi} 
\def\T{\ifmmode{\mathbb T}\else{$\mathbb T$}\fi} 
\def\SS{\ifmmode{\mathbb S}\else{$\mathbb S$}\fi} 
\def\DD{\ifmmode{\mathbb D}\else{$\mathbb D$}\fi} 
\newcommand{\del}{\partial}
\newcommand{\Cont}{{\operatorname{Cont}}}
\newcommand{\ben}{\begin{enumerate}}
\newcommand{\een}{\end{enumerate}}
\newcommand{\be}{\begin{equation}}
\newcommand{\ee}{\end{equation}}
\newcommand{\bea}{\begin{eqnarray}}
\newcommand{\eea}{\end{eqnarray}}
\newcommand{\beastar}{\begin{eqnarray*}}
\newcommand{\eeastar}{\end{eqnarray*}}
\newcommand{\bc}{\begin{center}}
\newcommand{\ec}{\end{center}}
\theoremstyle{theorem}
\newtheorem{thm}{Theorem}[section]
\newtheorem{cor}[thm]{Corollary}
\newtheorem{lem}[thm]{Lemma}
\newtheorem{prop}[thm]{Proposition}
\theoremstyle{definition}
\newtheorem{defn}[thm]{Definition}
\newtheorem{rem}[thm]{Remark}
\newtheorem{hypo}[thm]{Hypothesis}
\newtheorem{choice}[thm]{Choice}
\newtheorem{observation}[thm]{Observation}
\newtheorem*{thm*}{Theorem}
\numberwithin{equation}{section}
\def\R{{\mathbb R}}
\def\osc{{\hbox{\rm osc }}}
\def\Crit{{\hbox{Crit}}}
\def\E{{\mathbb E}}
\def\Z{{\mathbb Z}}
\def\C{{\mathbb C}}
\def\R{{\mathbb R}}
\def\P{{\mathbb P}}
\def\N{{\mathbb N}}
\def\11{{\mathbb I}}
\def\delbar{{\overline \partial}}
\def\Fix{{\text{\rm Fix}}}
\def\id{{\text{\rm id}}}
\def\H{\mathbb{H}}
\def\C{\mathbb{C}}
\def\Z{\mathbb{Z}}
\def\T{\mathbb{T}}
\def\Q{\mathbb{Q}}
\def\E{\ifmmode{\mathbb E}\else{$\mathbb E$}\fi} 
\def\N{\ifmmode{\mathbb N}\else{$\mathbb N$}\fi} 
\def\R{\ifmmode{\mathbb R}\else{$\mathbb R$}\fi} 
\def\Q{\ifmmode{\mathbb Q}\else{$\mathbb Q$}\fi} 
\def\C{\ifmmode{\mathbb C}\else{$\mathbb C$}\fi} 
\def\Z{\ifmmode{\mathbb Z}\else{$\mathbb Z$}\fi} 
\def\P{\ifmmode{\mathbb P}\else{$\mathbb P$}\fi} 
\def\SS{\ifmmode{\mathbb S}\else{$\mathbb S$}\fi} 
\def\DD{\ifmmode{\mathbb D}\else{$\mathbb D$}\fi} 
\def\R{{\mathbb R}}
\def\osc{{\hbox{\rm osc}}}
\def\Crit{{\hbox{Crit}}}
\def\E{{\mathbb E}}
\def\Z{{\mathbb Z}}
\def\C{{\mathbb C}}
\def\R{{\mathbb R}}
\def\N{{\mathbb N}}
\def\CL{{\mathcal L}}
\def\ev{{\text{\rm ev}}}
\def\delbar{{\overline \partial}}
\def\CA{{\mathcal A}}
\def\CB{{\mathcal B}}
\def\CC{{\mathcal C}}
\def\CL{{\mathcal L}}
\def\CM{{\mathcal M}}
\def\CP{{\mathcal P}}
\def\CP{{\mathcal P}}
\def\Dev{\operatorname{Dev}}
\def\Image{\operatorname{Image}}
\def\Ev{\text{\rm Ev}}
\def\mq{\mathfrak{q}}
\def\mp{\mathfrak{p}}
\def\mH{\mathfrak{H}}
\def\mh{\mathfrak{h}}
\def\ma{\mathfrak{a}}
\def\ms{\mathfrak{s}}
\def\mm{\mathfrak{m}}
\def\mn{\mathfrak{n}}
\def\mz{\mathfrak{z}}
\def\mw{\mathfrak{w}}
\def\Hoch{{\tt Hoch}}
\def\mt{\mathfrak{t}}
\def\ml{\mathfrak{l}}
\def\mT{\mathfrak{T}}
\def\mL{\mathfrak{L}}
\def\mg{\mathfrak{g}}
\def\md{\mathfrak{d}}
\def\mr{\mathfrak{r}}
\def\dim{\operatorname{dim}}
\def\Dev{\operatorname{Dev}}
\def\Cont{\operatorname{Cont}}
\def\Crit{\operatorname{Crit}}
\def\Spec{\operatorname{Spec}}
\def\Sing{\operatorname{Sing}}
\def\GFQI{\mathfrak{G}}
\def\Index{\operatorname{Index}}
\def\Image{\operatorname{Image}}
\def\ev{\operatorname{ev}}
\def\ben{\begin{enumerate}}
\def\een{\end{enumerate}}
\def\be{\begin{equation}}
\def\ee{\end{equation}}
\def\bea{\begin{eqnarray}}
\def\eea{\end{eqnarray}}
\def\beastar{\begin{eqnarray*}}
\def\eeastar{\end{eqnarray*}}
\def\bc{\begin{center}}
\def\ec{\end{center}}
\begin{document}

\quad \vskip1.375truein

\def\mq{\mathfrak{q}}
\def\mp{\mathfrak{p}}
\def\mH{\mathfrak{H}}
\def\mh{\mathfrak{h}}
\def\ma{\mathfrak{a}}
\def\ms{\mathfrak{s}}
\def\mm{\mathfrak{m}}
\def\mn{\mathfrak{n}}
\def\mz{\mathfrak{z}}
\def\mw{\mathfrak{w}}
\def\Hoch{{\tt Hoch}}
\def\mt{\mathfrak{t}}
\def\ml{\mathfrak{l}}
\def\mT{\mathfrak{T}}
\def\mL{\mathfrak{L}}
\def\mg{\mathfrak{g}}
\def\md{\mathfrak{d}}
\def\mr{\mathfrak{r}}
\def\Cont{\operatorname{Cont}}
\def\Crit{\operatorname{Crit}}
\def\Spec{\operatorname{Spec}}
\def\Sing{\operatorname{Sing}}
\def\GFQI{\text{\rm g.f.q.i.}}
\def\Index{\operatorname{Index}}
\def\Cross{\operatorname{Cross}}
\def\Ham{\operatorname{Ham}}
\def\Fix{\operatorname{Fix}}
\def\Graph{\operatorname{Graph}}
\def\id{\text{\rm id}}

\title[Proof of Shelukhin's conjecture]
{Contact instantons, anti-contact involution and proof of Shelukhin's conjecture}

\author{Yong-Geun Oh}
\address{Center for Geometry and Physics, Institute for Basic Science (IBS),
77 Cheongam-ro, Nam-gu, Pohang-si, Gyeongsangbuk-do, Korea 790-784
\& POSTECH, Gyeongsangbuk-do, Korea}
\email{yongoh1@postech.ac.kr}
\thanks{This work is supported by the IBS project \# IBS-R003-D1}

\begin{abstract}  In this paper, we prove Shelukhin's conjecture
on the translated points on \emph{any closed} contact manifold $(Q,\xi)$ which reads that
for any choice of function $H = H(t,x)$ and contact form $\lambda$ the contactomorphism $\psi_H^1$
carries a translated point in the sense of Sandon, whenever the inequality
$$
\|H\| \leq T(\lambda,M)
$$
holds the case. Main geometro-analytical tools are those of bordered contact instantons
employed in \cite{oh:entanglement1} with  Legendrian boundary condition via the Legendrianization
of contact diffeomorphisms. Along the way, we 
utilize the functorial construction of the \emph{contact product} that carries an involutive
symmetry and develop relevant contact Hamiltonian geometry with involutive symmetry. This involutive
symmetry plays a fundamental role in our proof in combination with the analysis of contact instantons.
\end{abstract}

\keywords{Shelukhin's conjecture, contact instantons,
translated points of contactomorphism, Legendrian submanifolds, translated Hamiltonian chords,
anti-contact involution, contact product}
\subjclass[2010]{Primary 53D42; Secondary 58J32}

\maketitle

\tableofcontents

\section{Introduction}

Let $(M,\xi)$ be a coorientable contact manifold, which will be assumed throughout
the paper without further mentioning, unless otherwise said.

The notion of \emph{translated points} was introduced by
Sandon \cite{sandon:translated} in the course of her applications of
Legendrian spectral invariants to the problem of contact nonsqueezing
initiated by Eliashberg-Kim-Polterovich \cite{eliash-kim-pol} and further studied by
\cite{chiu}, \cite{fraser}.

We first recall the definition of translated points.
For each given coorientation preserving contact diffeomorphism $\psi$ of $(M,\xi)$, we call
the function $g$ appearing in $\psi^*\lambda = e^g \lambda$ the \emph{conformal exponent}
for $\psi$ and denote it by $g=g_\psi$ \cite{oh:contacton-Legendrian-bdy}.

\begin{defn}[Sandon \cite{sandon:translated}] Let $(M,\xi)$ be a
contact manifold equipped with a contact form $\lambda$.
A point $x \in M$ is called a $\lambda$-translated point of a contactomorphism $\psi$
if $x$ satisfies
$$
\begin{cases}
g_\psi(x) = 0 \\
\psi(x) = \phi_{R_\lambda}^\eta(x)\,  \text{\rm for some $\eta \in \R$}.
\end{cases}
$$
We denote the set of $\lambda$-translated points of $\psi$ by $\Fix_\lambda^{\text{\rm trn}}(\psi)$.
\end{defn}

The present paper is a continuation of the present author's paper
\cite{oh:entanglement1}  which introduces the general problem of 
\emph{entanglement} of Legendrian links, which 
also provides the geometro-analytical framework for the present paper.
However
the nature of the present paper is largely geometric in the spirit of contact dynamics and
contact Hamiltonian geometry. The heart of the matter in our
 proofs of the present paper lies in the quantitative study of
the moduli space of perturbed contact instantons and its interplay 
with the contact Hamiltonian geometry and its $\Z_2$-involution symmetry of
the contact instanton homology introduced in \cite{oh:entanglement1,oh:contacton-gluing,
oh:entanglement2}.

Other analytical foundations
such as a priori elliptic estimates, Gromov-Floer-Hofer style convergence, 
relevant Fredholm theory and the generic mapping and evaluation transversalities
are already established in \cite{oh:entanglement1}, \cite{oh:entanglement1},
\cite{oh:contacton-gluing}, \cite{oh:perturbed-contacton} and
\cite{oh:contacton-transversality} which
 will not be repeated in the present paper but
referred thereto. Only the essential components are proved and
others are summarized in Appendix for readers' convenience and for the self-containedness of
the paper. Frequently we will be brief
leaving only the core of the argument
when we mention these analytical details. We also
use the same notations and terminologies therefrom.
(We refer readers to the survey article 
\cite{oh-kim:survey} for some coherent exposition of these
analytic foundations of (perturbed) contact instantons,
 and with some comparison with the framework of
 pseudoholomorphic curves on the symplectization.)
  
\subsection{Statement of main result}

We consider the following union
\be\label{eq:S-trace}
Z_S: = \bigcup_{t \in \R} \phi_{R_\lambda}^t(S)
\ee
which is called the \emph{Reeb  trace} of a subset $S\subset M$ in \cite{oh:entanglement1}.

We recall the following standard definition $T(M,R;\lambda)$ in contact geometry.

\begin{defn}\label{defn:spectrum} Let $\lambda$ be a contact form of contact manifold $(M,\xi)$
and $R \subset M$ a connected Legendrian submanifold.
Denote by $\mathfrak{Reeb}(M,\lambda)$ (resp. $\mathfrak{Reeb}(M,R;\lambda)$) the set of closed Reeb orbits
(resp. the set of self Reeb chords of $R$).
\begin{enumerate}
\item
We define $\operatorname{Spec}(M,\lambda)$ to be the set
$$
\operatorname{Spec}(M,\lambda) = \left\{\int_\gamma \lambda \mid \gamma \in \mathfrak Reeb(M,\lambda)\right\}
$$
and call the \emph{action spectrum} of $(M,\lambda)$.
\item We define the \emph{period gap} to be the constant given by
$$
T(M,\lambda): = \inf\left\{\int_\gamma \lambda \mid \lambda \in \mathfrak Reeb(M,\lambda)\right\} > 0.
$$
\end{enumerate}
We define
$$
\operatorname{Spec}(M,R;\lambda): =\left \{ \int_\gamma \lambda \mid \gamma \in \mathfrak{Reeb}(M,R;\lambda)\right\}
$$
and the associated $T(M,R;\lambda)$ similarly. We define
\be\label{eq:TMR}
T_\lambda(M;R): = \min\{T(M,\lambda), T(M,R;\lambda)\}
\ee
and call it the \emph{(chord) period gap} of $R$ in $M$.
\end{defn}
We denote by $\Cont(M,\xi)$ the set of contactomorphisms and by $\Cont_0(M,\xi)$ its
identity component. We also use the notation $H \mapsto \psi$ when $\psi = \psi_H^1$
for a given $\psi \in \Cont_0(M,\xi)$.  We recall the definition of 
the $L^{(1,\infty)}$-norm of $H$ defined by
\be\label{eq:||H||-intro}
\|H\|: =  \int_0^1  \max H_t - \min H_t \, dt.
\ee
The main goal of the present paper is to give the proof of
the following conjecture of Shelukhin  \cite[Conjecture 31]{shelukhin:contactomorphism}.

\begin{thm}[Main Theorem; Shelukhin's conjecture]\label{thm:translated-intro}
Assume $(M,\xi)$ is a compact contact manifold and let $\psi \in \Cont_0(M,\xi)$.
Let $\lambda$ be a contact form of $\xi$ and denote by $T(M,\lambda) > 0$ the minimal
period of closed Reeb orbits of $\lambda$. Then for any $H \mapsto \psi$, the following holds:
\begin{enumerate}
\item Provided $\|H\| \leq  T(M,\lambda)$, we have
$$
\# \Fix_\lambda^{\text{\rm trn}}(\psi) \neq \emptyset
$$
\item Provided $\|H\| < T(M,\lambda)$ and $H$ is nondegenerate, we have
$$
\# \Fix_\lambda^{\text{\rm trn}}(\psi) \geq \dim H^*(M; \Z_2).
$$
\end{enumerate}
\end{thm}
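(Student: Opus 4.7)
The plan is to translate the existence of $\lambda$-translated points of $\psi$ into a Legendrian Reeb-chord problem on the contact product $M \times M \times \R$, and then run the bordered perturbed contact instanton machinery of \cite{oh:entanglement1}. First, to each coorientation-preserving contactomorphism $\psi$ I would associate its graph $\Gamma_\psi$, a Legendrian submanifold of the contact product, together with the ``diagonal'' Legendrian $\Delta := \Gamma_{\id}$. A direct unraveling of the definitions shows that $x \in \Fix_\lambda^{\text{\rm trn}}(\psi)$ if and only if there is a Reeb chord of the contact product from $\Delta$ to $\Gamma_\psi$ of some length $\eta \in \R$ (with $\eta = 0$ corresponding to an honest intersection point). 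The Hamiltonian $H$ with $H \mapsto \psi$ lifts to a contact Hamiltonian on the contact product generating a Legendrian isotopy of $\Delta$ onto $\Gamma_\psi$, providing the dynamical perturbation datum.

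Next I would set up the moduli space of perturbed contact instantons $u: \R \times [0,1] \to M \times M \times \R$ with Legendrian boundary on $\Delta$ and $\Gamma_\psi$, asymptotic at $\pm\infty$ to translated-point chords. The analytic backbone --- elliptic estimates, Gromov--Floer--Hofer convergence, Fredholm theory, and mapping/evaluation transversality --- is imported wholesale from \cite{oh:entanglement1} and the companion papers. The quantitative cornerstone is the energy identity, which bounds the total energy of any such solution by the $L^{(1,\infty)}$-norm $\|H\|$ of \eqref{eq:||H||-intro}; on the other hand, by Definition~\ref{defn:spectrum} and the functorial properties of the contact product, any nontrivial Reeb splitting in the Gromov--Floer compactification costs action at least $T(M,\lambda)$. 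Under the hypothesis $\|H\| < T(M,\lambda)$ no such splitting can occur, and one obtains a well-defined $\Z_2$-coefficient chain complex whose generators are precisely the translated points of $\psi$.

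The main obstacle is that the contact product is \emph{non-tame}: it is non-compact in the Reeb direction and carries no cylindrical end structure, so the usual $C^0$-bounds on contact instantons are unavailable. This is where the anti-contact $\Z_2$-involution $\iota$ of the contact product, built into its functorial construction, plays the decisive role. I would arrange the Hamiltonian perturbation and the boundary pair $(\Delta, \Gamma_\psi)$ to be $\iota$-equivariant, so that the moduli space inherits a $\Z_2$-action whose symmetry concentrates the images of the perturbed contact instantons near the relatively compact fixed locus of $\iota$. Combined with the action bound from the energy identity, this would yield the uniform a priori $C^0$-estimates that substitute for the missing tameness and close the compactness argument; this is the geometric heart of the proof.

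With compactness and transversality in place, standard continuation arguments --- valid so long as $\|H\|$ stays strictly below $T(M,\lambda)$ --- show that the resulting contact instanton homology is invariant under admissible homotopies of $H$, and for $H \equiv 0$ it reduces to a Morse-type complex on $M$ computing $H^\ast(M;\Z_2)$. The rank inequality of Part~(2) then follows from the usual Floer--Morse dimension bound. For Part~(1), I would approximate a given $H$ with $\|H\| \leq T(M,\lambda)$ by a sequence of nondegenerate $H_n$ with $\|H_n\| < T(M,\lambda)$, invoke Part~(2) to obtain a translated point of each $\psi_{H_n}^1$, and extract a subsequential limit using compactness of $M$; the resulting point is a translated point of $\psi$, which may correspond to a Reeb chord of length exactly $T(M,\lambda)$.
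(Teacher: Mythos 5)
Your high-level strategy matches the paper: Legendrianize $\psi$ in the contact product $M_Q = M \times M \times \R$, translate translated points into Reeb chords via Proposition~\ref{prop:Fix-Reeb-correspondence}, import the perturbed contact instanton machinery with its $\pi$- and $\lambda$-energy bounds by $\|H\|$, and invoke the anti-contact involution $\iota$ to overcome the non-tameness of $M_Q$. But your account of the ``geometric heart'' --- how the $\Z_2$-symmetry actually delivers the $C^0$-bound --- contains a concrete error and misses the mechanism.

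First, the boundary pair $(\Delta,\Gamma_\psi)$ is \emph{not} $\iota$-equivariant, and cannot be arranged to be so for general $\psi$: a direct check using $g_{\psi^{-1}} = -g_\psi\circ\psi^{-1}$ shows $\iota(\Gamma_\psi) = \Gamma_{\psi^{-1}}$, which equals $\Gamma_\psi$ only when $\psi$ is an involution. The paper's resolution is to work with the \emph{dynamical} CI equation \eqref{eq:K-intro}, in which both strip boundaries lie on the single Legendrian $\Gamma_{\id} = \Fix\iota$, and all dependence on $\psi$ is pushed into the lifted Hamiltonian $\H'$ of \eqref{eq:HH'}, built from the space--time symmetric function $\H(t,x,y) = H(1-t,x)+H(t,y)$. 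It is this Hamiltonian, not the boundary pair, that carries the anti-symmetry $\iota^* X_{\H'} = -X_{\H'}$ (Proposition~\ref{prop:symmetry}); moreover one must use the rescaled contact form $\CA = e^{-\eta/2}\mathscr{A}$ rather than the naive $\mathscr{A}$ to make $\iota^*\CA = -\CA$ hold (Lemma~\ref{lem:anti-contact}). None of these choices appear in your sketch, yet without them the symmetry of Proposition~\ref{prop:tildeU} fails.

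Second, the assertion that the symmetry ``concentrates the images near the relatively compact fixed locus of $\iota$'' is not how the $C^0$-estimate is obtained and would not give a bound by itself. The correct argument is a two-sided one: $M_Q$ is \emph{tame on one end} $\{\eta > 0\}$ (Proposition~\ref{prop:tame}), so the maximum principle applied to the quasi-plurisubharmonic function $\eta$ bounds $(\eta\circ U)_+$ uniformly; then the conjugate solution $\widetilde U(\tau,t) = \iota(U(\tau,1-t))$ --- which involves the time reversal $t\mapsto 1-t$, not merely the spatial $\iota$ --- satisfies $\eta\circ\widetilde U = -\eta\circ U(\cdot,1-\cdot)$, so the same one-sided bound applied to $\widetilde U$ controls $(\eta\circ U)_-$. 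Your proposal never mentions the tameness of the $\{\eta>0\}$ end, which is what makes the reflection argument close; as written the ``concentration'' claim is unsupported. Since you yourself identify the $C^0$-estimate as the decisive step, this is a genuine gap in the proposal rather than a cosmetic omission.
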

\begin{rem}
According to a recent arXiv posting by Cant and Hedicke \cite[Theorem 1]{cant-hedicke}, 
this result is sharp: They 
construct a contact isotopy of $S^{2n-1}$, $n \geq 2$, with the standard contact structure, whose time-one
maps have no translated points relative the standard contact form $\alpha$ thereof, and which satisfy
$$
T(M,\alpha) < \|H|\| \leq T(M,\alpha) + \epsilon, \quad H: = \Dev(t \mapsto \psi_t)
$$
for any $\epsilon > 0$. While the statements of  \cite[Conjecture 31]{shelukhin:contactomorphism}, \cite[Theorem 1]{cant-hedicke} are made differently,
they are equivalent to the inequality given in Theorem \ref{thm:translated-intro}. 
(See the remark made in \cite[Section 1.2]{cant-hedicke}
for a clarification of  some difference in the conventions used
in \cite{albers-frauen} and \cite{shelukhin:contactomorphism}.)
\end{rem}

We now provide this theorem with some general perspective in terms of quantitative contact dynamics
recalling from \cite{oh:entanglement1}.

\begin{defn}[Definition 24 \cite{shelukhin:contactomorphism}] For given $\psi \in \Cont_0(M,\xi)$, we define
$$
\|\psi\|^{\text{\rm osc}}_\lambda = \inf_{H \mapsto \psi} \|H\|.
$$
\end{defn}

\begin{defn}[Definition 1.24 \cite{oh:entanglement1}] Let $(M,\xi)$ be a contact manifold, and let
$S_0, \, S_1$ of compact subsets $(M,\xi)$.
\begin{enumerate}
\item  We define
\be\label{eq:lambda-untangling-energy-intro}
e_\lambda^{\text{\rm trn}}(S_0, S_1) : = \inf_H\{ \|H\| ~|~ \psi_H^1(S_0) \cap Z_{S_1} = \emptyset\}.
\ee
We put $e_\lambda^{\text{\rm trn}}(S_0, S_1) = \infty$ if $\psi_H^1(S_0) \cap Z_{S_1} \neq \emptyset$ for
all $H$. We call $e_\lambda^{\text{\rm trn}}(S_0, S_1)$ the \emph{$\lambda$-untangling energy} of $S_0$ from $S_1$
or just of the pair $(S_0,S_1)$.
\item We put
\be\label{eq:Reeb-untangling-energy-intro}
e^{\text{\rm trn}}(S_0,S_1) = \sup_{\lambda\in \CC(\xi)} e_\lambda^{\text{\rm trn}}(S_0, S_1).
\ee
We call $e^{\text{\rm trn}}(S_0,S_1)$ the \emph{Reeb-untangling energy} of $S_0$ from $S_1$ on $(M,\xi)$.
\end{enumerate}
\end{defn}
We mention that the quantity $e_\lambda^{\text{\rm trn}}(S_0, S_1)$ is \emph{not} symmetric, i.e.,
$$
e_\lambda^{\text{\rm trn}}(S_0, S_1) \neq e_\lambda^{\text{\rm trn}}(S_1, S_0)
$$
in general.

The following dynamical invariant of contact form $\lambda$ is considered in
\cite{oh:entanglement1}. (See also \cite{shelukhin:contactomorphism} for various
invariants of contactomorphisms of similar types.)

\begin{defn}[Untanglement energy of $\lambda$] Let $(M,\xi)$ be a contact manifold.
For each contact form $\lambda$ of $\xi$, we define
\be\label{eq:e-tra-psi}
e_\lambda^{\text{\rm trn}}(M,\xi): = \inf_{H \mapsto \psi}
\{\|\psi\|_\lambda^{\text{\rm osc}} \mid \Fix_\lambda^{\text{\rm trn}}(\psi) = \emptyset\}.
\ee
We call $e_\lambda^{\text{\rm trn}}(M,\xi)$ the \emph{untanglement energy} of the contact form
$\lambda$.
\end{defn}

In this perspective, an immediate corollary of Main Theorem is the following lower bound on
the $e_\lambda^{\text{\rm trn}}(M,\xi)$.
\begin{cor}
$$
e_\lambda^{\text{\rm trn}}(M,\xi) \geq T(M,\lambda).
$$
\end{cor}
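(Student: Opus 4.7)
The plan is to derive the corollary as a direct logical consequence of part (1) of the Main Theorem (Theorem \ref{thm:translated-intro}) by contrapositive and by passing to infima in the defining formula \eqref{eq:e-tra-psi}. The inequality is exactly what should be expected: Theorem \ref{thm:translated-intro}(1) forces a translated point whenever the oscillation norm is at most $T(M,\lambda)$, so the infimum of the oscillation norm, taken over the complementary class of contactomorphisms having no translated point, cannot drop below $T(M,\lambda)$. Conceptually this mirrors how Chekanov-type displacement energy bounds arise from existence of Floer-theoretic intersection points below a threshold energy.

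Concretely, I would proceed in three short steps. \textbf{Step 1.} Fix any $\psi \in \Cont_0(M,\xi)$ with $\Fix_\lambda^{\text{\rm trn}}(\psi) = \emptyset$. By the contrapositive of Theorem \ref{thm:translated-intro}(1), every Hamiltonian $H$ satisfying $H \mapsto \psi$ must obey $\|H\| > T(M,\lambda)$, for otherwise the theorem would produce at least one translated point of $\psi$ relative to $\lambda$. \textbf{Step 2.} Taking the infimum over all such $H$, the definition of $\|\psi\|_\lambda^{\text{\rm osc}}$ yields
\[
\|\psi\|_\lambda^{\text{\rm osc}} \;=\; \inf_{H \mapsto \psi} \|H\| \;\geq\; T(M,\lambda),
\]
where the passage from strict inequality pointwise to non-strict inequality for the infimum is the only minor point to be careful about, and is of course automatic. \textbf{Step 3.} Taking the further infimum over all $\psi \in \Cont_0(M,\xi)$ with empty translated-point set in the definition \eqref{eq:e-tra-psi} delivers
\[
e_\lambda^{\text{\rm trn}}(M,\xi) \;=\; \inf_{\psi : \Fix_\lambda^{\text{\rm trn}}(\psi) = \emptyset} \|\psi\|_\lambda^{\text{\rm osc}} \;\geq\; T(M,\lambda).
\]

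There is no real obstacle here beyond the logical unwrapping above; all of the analytical difficulty has already been absorbed into the proof of Theorem \ref{thm:translated-intro}(1). The only ambiguity worth flagging is whether the set of $\psi$ with no translated points is nonempty (otherwise the infimum is $+\infty$ by convention, and the inequality is trivially true); in either case the stated bound holds.
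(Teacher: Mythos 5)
Your argument is correct and is precisely the immediate logical unwinding the paper has in mind; the paper offers no separate proof, presenting the corollary as an immediate consequence of Theorem~\ref{thm:translated-intro}(1), and your three-step contrapositive-plus-infima chain is exactly that. The one subtlety you flag (strict vs.\ non-strict inequality surviving the infimum, and the vacuous case of an empty class of $\psi$) is handled correctly.
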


There have been various partial results concerning Sandon-Shelukhin's conjecture
for the strongly fillable case for which we refer readers to  \cite{albers-fuchs-merry} and
\cite{shelukhin:contactomorphism},  and \cite{albers-merry}
for the general compact contact manifold but with no precise estimate on the upper bound.
These results rely on the work on leaf-wise intersection points from \cite{albers-frauen}
which uses the machinery of Rabinowitz Floer homology.

\begin{rem} In his arXiv posting \cite{cant}, Cant produced a contactomorphism isotopic
to the identity on the contact sphere $S^{2n+1}$ that has no translated points and hence disproves
Sandon's conjecture on the Arnold-type conjecture on the translated points. He also showed that certain 
contact manifold carries a contactomorphism that has no translated point at all.  It will be a very
interesting question to study what the implication of this untanglement means
in the context of contact dynamics e.g., in the context of thermodynamics
\cite{lim-oh:thermodynamics}.
\end{rem}

\subsection{Contact diffeomorphisms and their Legendrianizations}

The main geometric operation  we employ in the present paper is
the \emph{Legendrianization} of contactomorphism $\psi$. Such an operation was
used by Chekanov \cite{chekanov:generating},  Bhupal \cite{bhupal} and
Sandon \cite{sandon:translated}. It has been also utilized to give a 
locally contractible topology on the group of contactomorphisms by Lychagin \cite{lychagin}, 
and in the study of simpleness question of group of contactomorphisms by
Tsuboi \cite{tsuboi}, Rybicki \cite{rybicki} and the present author \cite{oh:simplicity}. 

For a given contact manifold $(Q,\xi)$ equipped with a contact form $\lambda$, the product $M_Q = Q \times Q \times \R$
carries a natural contact structure given by
\be\label{eq:tildeCA}
\Xi : = \ker {\mathscr A},  \quad {\mathscr A} = -e^{\eta}\pi_1^*\lambda + \pi_2^*\lambda
\ee
associated to each given contact form $\lambda$ of $(Q,\xi)$ which was used in \cite{bhupal}
(modulo the sign).  An upshot of the present paper is to utilize some hidden involutive 
symmetry of this contact structure $\Xi$. For this purpose, it is natural to take the contact form
$$
\CA = e^{-\frac{\eta}{2}} \mathscr A = -e^{\frac{\eta}{2}}\pi_1^*\lambda +
e^{-\frac{\eta}{2}}  \pi_2^*\lambda
$$
for which the anti-symmetry is apparent under the involution $\iota(x,y,\eta): = (y,x-\eta)$.

\begin{defn}[$\Gamma_{\psi}$] Let $\psi: (Q,\xi) \to (Q,\xi)$ be a contactomorphism. 
For each given contact form $\lambda$, 
we define the \emph{Legendrianization} of $\psi$ to be the submanifold
\be\label{eq:contact-graph}
\Gamma_\psi = \Gamma_\psi^\lambda: = \{(\psi(q),q, g_\psi^\lambda(q)) \in Q \times Q \times \R \mid 
q \in Q \}.
\ee
\end{defn}
Here we would like to recall readers 
 that the definitions of Legendrian submanifold and contactomorphisms do not
 on the choice of contact forms \emph{depending only on  the contact
structure}. Therefore for the purpose of proving the main result of
the present paper, the contact form can be freely chosen 
\emph{as long as the contact form $\lambda$ on $Q$ and the contact structure 
$\Xi$ on $M_Q$ are fixed.} In our case the choice of $\CA$ is the one
we use because it is compatible with the involutive symmetry of the contact structure $\Xi$.

The following lemma  provides a link between the intersection set $\Gamma_\psi \cap Z_{\Gamma_\id}$
in $M_Q$ and the translated points of a contactomorphism $\psi$ on $Q$ which
converts the translated point problem into that of translated intersection problem
\emph{with quantitative comparison}.

\begin{lem}[Corollary \ref{cor:link}] \label{lem:link-intro}
$$
e^{\text{\rm trn}}_{\CA}(\Gamma_{\text{\rm id}},\Gamma_{\text{\rm id}}) \geq
T(M_Q,\CA;\Gamma_{\text{\rm id}})
\Longrightarrow e^{\text{\rm trn}}_\lambda(Q,\xi) \geq T(Q,\lambda).
$$
\end{lem}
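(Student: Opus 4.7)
The plan is to convert the conclusion about translated points on $Q$ into a statement about the Legendrian intersection $\Gamma_\psi \cap Z_{\Gamma_{\id}}$ inside the contact product $M_Q$, via the Legendrianization functor $\psi \mapsto \Gamma_\psi$, and then invoke the hypothesis. The first task is the qualitative dictionary: $\Gamma_\psi \cap Z_{\Gamma_{\id}} \neq \emptyset$ if and only if $\psi$ carries a $\lambda$-translated point. A direct computation of the Reeb vector field of $\CA$ shows that its $\partial_\eta$-component vanishes, so the Reeb flow preserves the slice $\{\eta=0\}$ on which $\Gamma_{\id}$ sits, and $Z_{\Gamma_{\id}} = \{(q_1, \phi_{R_\lambda}^u(q_1), 0) \mid q_1 \in Q,\, u \in \R\}$. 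A point $(\psi(q), q, g_\psi(q)) \in \Gamma_\psi$ lies in this set exactly when $g_\psi(q) = 0$ and $\psi(q) = \phi_{R_\lambda}^{-u}(q)$ for some $u$, i.e., $q$ is a translated point. The same picture gives $T(M_Q, \CA; \Gamma_{\id}) = T(Q, \lambda)$: self Reeb chords of $\Gamma_{\id}$ are in action-preserving bijection with closed Reeb orbits of $(Q, \lambda)$, while closed Reeb orbits of $M_Q$ force both $Q$-coordinates to close up simultaneously and so have action at least $2T(Q, \lambda)$.

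The heart of the argument is then a Hamiltonian lift: for any $H \mapsto \psi$ on $Q$, I would construct $\widetilde H$ on $M_Q$ generating a contact isotopy $\Psi^t$ of $(M_Q, \Xi)$ with $\Psi^t(\Gamma_{\id}) = \Gamma_{\psi_H^t}$ and $\|\widetilde H\| \leq \|H\|$. The natural candidate is $\Psi^t(x, y, \eta) = (\psi_H^t(x),\, y,\, \eta - g_{\psi_H^t}(x))$, with the $\eta$-shift uniquely fixed by requiring $(\Psi^t)^*\CA \in \R_{>0} \cdot \CA$. Its contact Hamiltonian with respect to $\CA$ equals, up to the conformal factor $e^{\eta/2}$, the pullback of $H$ under $\pi_1$. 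Since $\bigcup_{t \in [0,1]} \Gamma_{\psi_H^t}$ sits in a bounded $\eta$-slab, I would multiply $\widetilde H$ by a cutoff supported in a neighborhood of that slab; the symmetric form of $\CA$ relative to the anti-contact involution $\iota$ then allows one to balance the $e^{\pm\eta/2}$ contributions on the two factors and extract the norm bound $\|\widetilde H\| \leq \|H\|$.

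The implication closes by contrapositive. Suppose $\psi$ has no $\lambda$-translated point and $H \mapsto \psi$ with $\|H\| < T(Q, \lambda)$. By the dictionary of the first step, $\Psi^1(\Gamma_{\id}) = \Gamma_\psi$ is disjoint from $Z_{\Gamma_{\id}}$, whence by the definition of $e^{\text{\rm trn}}_\CA$ and the standing hypothesis
\[
\|\widetilde H\| \;\geq\; e^{\text{\rm trn}}_\CA(\Gamma_{\id}, \Gamma_{\id}) \;\geq\; T(M_Q, \CA; \Gamma_{\id}) \;=\; T(Q, \lambda),
\]
contradicting $\|\widetilde H\| \leq \|H\| < T(Q, \lambda)$; taking the infimum over $H$ yields $e^{\text{\rm trn}}_\lambda(Q, \xi) \geq T(Q, \lambda)$. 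The main obstacle is precisely the norm estimate in the lift step: the naive lift has oscillation blowing up in $\eta$ because of the exponential conformal factor, which is the manifestation of the non-tameness of $M_Q$ alluded to in the introduction; the $\iota$-symmetry engineered into $\CA$ (rather than $\mathscr A$) is what makes it possible to control this loss.
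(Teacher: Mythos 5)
Your architecture matches the paper's: the Legendrianization dictionary (your step one is Proposition \ref{prop:Fix-Reeb-correspondence}), the identity $T(M_Q,\CA;\Gamma_{\id})=T(Q,\lambda)$ (your step two is Lemma \ref{lem:Tlambda=TA}), a Hamiltonian lift with a norm bound, and a contrapositive; steps one and two are correct. The gap is in the lift, which is exactly the quantitative comparison this statement hinges on. Your candidate $\Psi^t(x,y,\eta)=(\psi_H^t(x),\,y,\,\eta-g_{\psi_H^t}(x))$ is indeed a $\CA$-contactomorphism (one computes $(\Psi^t)^*\CA=e^{g_{\psi_H^t}(x)/2}\CA$), but its $\CA$-contact Hamiltonian works out to $\widetilde H(t,x,y,\eta)=-e^{\eta/2}H(t,x)$, whose oscillation over $M_Q$ is infinite. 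The cutoff idea does not recover $\|\widetilde H\|\le\|H\|$: the cutoff must be $\equiv 1$ on a slab $\{|\eta|\le\max_t\|g_{\psi_H^t}\|_{C^0}\}$ containing $\bigcup_t\Psi^t(\Gamma_{\id})$ (nonempty unless the isotopy is strict), and on that slab the weight $e^{\eta/2}>1$ already inflates the oscillation by a constant depending on $H$ in an uncontrolled way. And there is nothing to ``balance'': your lift moves only the $x$-factor, producing a single exponential weight, not two of opposite sign.

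The paper's lift moves both $Q$-factors so that the two conformal weights really cancel. It takes $\H(t,x,y,\eta)=-\widetilde H(t,x)+H(t,y)=H(1-t,x)+H(t,y)$, which is independent of $\eta$ and $\upsilon$-invariant (Proposition \ref{prop:symmetry}), hence $\iota^*X_{\H}=-X_{\H}$; its flow acts on $\pi_1$ through $e^{-\eta/2}\widetilde H$ and on $\pi_2$ through $e^{\eta/2}H$ (Corollary \ref{cor:liftedflow}), and contracting with $\CA=-e^{\eta/2}\pi_1^*\lambda+e^{-\eta/2}\pi_2^*\lambda$ cancels the $e^{\pm\eta/2}$ factors and leaves $\H$ $\eta$-free. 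After the time-rescaling $\H'(t,\cdot)=\tfrac12\H(\tfrac{t}{2},\cdot)$ of \eqref{eq:HH'}, a change of variables gives $\|\H'\|=\|H\|$ exactly, and Proposition \ref{prop:time1/2-map} gives $\psi_{\H'}^1(\Gamma_{\psi})=\Gamma_{\id}$, which is what feeds the correspondence $\Fix_\lambda^{\text{\rm trn}}(\psi)\leftrightarrow\mathfrak{Reeb}(\Gamma_\psi,\Gamma_{\id})$ and closes the untanglement-energy estimate. (A minor aside: the extra observation that closed Reeb orbits of $M_Q$ have $\CA$-action at least $2T(Q,\lambda)$ is correct but not needed here, since $T(M_Q,\CA;\Gamma_{\id})$ in the hypothesis is already the chord period gap alone; and the norm blowup you correctly detect is a flaw of the one-sided lift, not a manifestation of the non-tameness of $M_Q$, which is a separate $C^0$-estimate issue handled elsewhere.)
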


The following general result on the existence of Reeb chords 
was proved for the two-component Legendrian link
$(\psi(R),R)$ for any compact Legendrian submanifold and contactomorphism of
a \emph{tame} contact manifold $(M,\lambda)$ in \cite{oh:entanglement1}.

\begin{thm}[Theorems 1.13 \cite{oh:entanglement1}]\label{thm:shelukhin-intro}
Let $(M,\xi)$ be a contact manifold equipped with a tame contact form $\lambda$.
Let $\psi \in \Cont_0(M,\xi)$ and
consider any Hamiltonian $H = H(t,x)$ with $H \mapsto \psi$.
Assume $R$ is any compact
Legendrian submanifold of $(M,\xi)$. Then the following hold:
\begin{enumerate}
\item Provided $\|H\| \leq T_\lambda(M,R)$, we have
$$
\#\mathfrak{Reeb}(\psi(R),R) \neq \emptyset.
$$
\item Provided $\|H\| < T_\lambda(M,R)$
and $\psi= \psi_H^1$ is nondegenerate to $(M,R)$, then
$$
\#\mathfrak{Reeb}(\psi(R),R) \geq \dim H^*(R; \Z_2)
$$
\end{enumerate}
\end{thm}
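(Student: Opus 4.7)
The plan is to translate the search for Reeb chords between $\psi(R)$ and $R$ into a Floer-theoretic count of \emph{perturbed bordered contact instantons} with Legendrian boundary in $R$. Concretely, one considers maps $u:\R\times[0,1]\to M$ satisfying the perturbed contact instanton equation (a gauge-transformed form of the unperturbed equation on the strip with one boundary on $R$ and the other on $\psi(R)$) whose asymptotic limits at $\tau=\pm\infty$ are precisely Reeb chords from $\psi(R)$ to $R$. The existence claim in (1) then reduces to nonemptiness of the corresponding moduli space.

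The first step is the action--energy identity. For any such solution $u$, the contact instanton energy $E(u)$ is controlled by the oscillation norm $\|H\|$ of the generating Hamiltonian, modulo a boundary contribution from the conformal exponent $g_\psi$. Combined with the hypothesis $\|H\|\le T_\lambda(M,R)$, this rules out every bubbling phenomenon --- sphere and disc bubbles, splittings along a self-Reeb chord of $R$, and splittings along a closed Reeb orbit --- since each consumes at least $T_\lambda(M,R)$ units of action by the very definition of the chord period gap \eqref{eq:TMR}. Tameness of $\lambda$ then supplies the $C^0$ target estimate needed to run Gromov--Floer--Hofer compactness, following the analysis already established in \cite{oh:entanglement1, oh:contacton-gluing, oh:perturbed-contacton}.

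With compactness and the generic mapping/evaluation transversality of \cite{oh:contacton-transversality} in hand, one assembles the usual Floer package: a $\Z_2$-coefficient chain complex $CF(R,\psi(R); H)$ generated, under the nondegeneracy hypothesis of (2), by Reeb chords from $\psi(R)$ to $R$, with differential counting rigid perturbed contact instantons. A continuation argument along a path of Hamiltonians from $H$ to a $C^2$-small autonomous Morse function $f$ on $R$, arranged so that the oscillation norms along the path stay below the gap $T_\lambda(M,R)$, yields isomorphisms
\[
HF(R,\psi(R); H;\Z_2) \;\cong\; HF(R,R; f;\Z_2) \;\cong\; H^*(R;\Z_2),
\]
the last being the standard Floer-to-Morse reduction for a small autonomous perturbation. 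This is exactly the lower bound claimed in (2). For (1), one approximates $H$ by a nondegenerate $\widetilde H$ with $\|\widetilde H\|\le T_\lambda(M,R)$, produces a chord between $\psi_{\widetilde H}^1(R)$ and $R$ from the nonvanishing of $H^*(R;\Z_2)$, and extracts a limit chord for $\psi$ itself by a final application of compactness.

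The principal obstacle is the sharp compactness assertion. Contact instantons do not live in a symplectization and are not immediately amenable to the standard SFT bubbling dictionary, so one must exclude by hand every possible degeneration: Reeb-direction spiking, boundary sliding along $R$ or $\psi(R)$, and horizontal/vertical bubbling. The $\varepsilon$-regularity package of \cite{oh:entanglement1} together with the sharp threshold $T_\lambda(M,R)$ is what forces these degenerations to be energy-free and therefore absent. A secondary but nontrivial issue is carrying out the continuation without sacrificing the strict inequality $\|H\|<T_\lambda(M,R)$, which is handled by a monotone reparametrisation of the interpolating path so that no intermediate Hamiltonian has oscillation norm exceeding that of the endpoints.
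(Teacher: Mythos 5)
Your proposal is correct in its main thrust, but it routes through a genuinely different organization than what the paper (following \cite{oh:entanglement1}, recalled in Appendix~\ref{sec:recollection}) actually does. You set up a Lagrangian-Floer-style complex $CF(R,\psi(R);H;\Z_2)$ generated by Reeb chords, run a continuation isomorphism down to a $C^2$-small autonomous $f$, and read off the rank bound from $HF\cong H^*(R;\Z_2)$. The paper instead builds the \emph{parameterized moduli space} $\CM^{\text{\rm para}}_{[0,K_0]}(M,R;J,H)$ attached to the curvature-free two-parameter family $H^s = \Dev_\lambda(t\mapsto\psi_H^{st})$ elongated by the cut-off functions $\chi_K$. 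At $K=0$ the moduli space is diffeomorphic to $R$ (constant solutions, Lemma~\ref{lem:ev0}); for $K\ge K_0$ it is empty if $\psi(R)\cap Z_R=\emptyset$ (Proposition~\ref{prop:obstruction}); the pointed one-dimensional cobordism $N_\Gamma = \Ev^{-1}(\Gamma)$ then has an odd number of boundary points, which forces non-compactness, which forces a bubble of energy $\ge T_\lambda(M,R)$, contradicting $E^\pi(u)\le\|H\|$. This cobordism/Gromov-trick route never has to assemble a chain complex or verify chain-map and chain-homotopy identities to get nonemptiness; it pays for that economy by requiring the sharp evaluation transversality for the one-manifold with boundary. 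Your Floer route is arguably cleaner for the quantitative bound $\#\mathfrak{Reeb}\ge\dim H^*(R;\Z_2)$, and is indeed closer in spirit to what \cite[Part 2]{oh:entanglement1} does for the nondegenerate statement~(2).

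The one place where your write-up is a bit too glib is the energy control for the continuation step. You say the interpolating family is ``arranged so that the oscillation norms along the path stay below the gap $T_\lambda(M,R)$'' and that this is ``handled by a monotone reparametrisation.'' The energy of a continuation-type solution is not controlled merely by $\max_s\|H^s\|$: there is a curvature contribution from the $s$-dependence, and the sharp estimate $E^\pi(u)\le\|H\|$ in Proposition~\ref{prop:energy-estimate-H} hinges on the very specific choice $H^s=\Dev_\lambda(t\mapsto\psi_H^{st})$ together with the monotonicity conditions $\chi_K'\ge 0$ on $[-K-1,-K]$ and $\chi_K'\le 0$ on $[K,K+1]$, which make the cross-derivative terms come with a sign. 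An arbitrary monotone reparametrisation of a path from $H$ to $f$ need not enjoy this cancellation, so you should either use the same curvature-free family (in which case your continuation and the paper's parameterized moduli space become the same object) or supply a separate argument that the curvature term of your chosen homotopy is nonpositive. This is a fixable gap, not a fatal one, but it is exactly the point where the paper's construction is doing essential work.

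For statement~(1) with the borderline case $\|H\|=T_\lambda(M,R)$, your limiting argument via approximating Hamiltonians is the right idea; just note that you need $\|\widetilde H_n\|<T_\lambda(M,R)$ strictly (for instance $\widetilde H_n=(1-1/n)H$ after time-reparametrisation), not $\le$, so that each approximant falls under case~(2). The resulting chords have uniformly bounded period, so a subsequence converges to a Reeb chord for $\psi_H^1$.
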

This theorem is known to be optimal on tame contact manifold
in general. (See \cite[Example 1.4 \& Lemma 1.6]{rizell-sullivan:algebra}. That paper
also contains relevant results on the \emph{contactification} $P \times \R$ of an exact symplectic
manifold $P$ which utilizes the machinery of Chekanov-Eliashberg DGA.)

\subsection{Contact product $M_Q$ and anti-contact involution}

Furthermore Theorem \ref{thm:shelukhin-intro}, together with Lemma \ref{lem:link-intro},
 would have already proved Shelukhin's conjecture 
stated in \cite[Conjecture 31]{shelukhin:contactomorphism} \emph{provided}
\begin{enumerate}
\item the contact product
 $M_Q = Q \times Q \times \R$ were tame:The notion of tame contact manifolds
 is introduced by the present author in \cite{oh:entanglement1}. 
 See the beginning of Part \ref{part:shelukhin-conjecture} for the recall of the definition.
 \item the Hamiltonian $H$ generating $\psi = \psi_H^1$ can be lifted to one $\H$ which
 generates the isotopy $\Gamma_{\psi_H^t}$ on $M_Q$ so that $\|H\| = \|\H\|$.
 \end{enumerate}

So the main task, in the presence of the aforementioned theorem, is to establish these two
results in the present paper.

For example, the \emph{noncompact} contact manifold  $M_Q = Q \times Q \times \R$ 
of our main interest in the present paper is  tame. 

Here enters the following crucial observation:

\begin{observation}\label{observ:iota} $\Gamma_{\text{\rm id}}$ is the fixed point set of
anti-contact involution $\iota: M_Q \to M_Q$ defined by
\be\label{eq:iota}
\iota(x,y,\eta) = (y,x, -\eta).
\ee
\end{observation}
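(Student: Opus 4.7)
The plan is to verify the observation through three elementary checks on the map $\iota(x,y,\eta) = (y,x,-\eta)$: that it is a smooth involution of $M_Q$, that it pulls $\CA$ back to $-\CA$, and that its fixed-point set coincides with $\Gamma_{\id}$.

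The first point is visibly immediate since $\iota(\iota(x,y,\eta)) = \iota(y,x,-\eta) = (x,y,\eta)$. For the anti-contact property I would use the identities $\pi_1 \circ \iota = \pi_2$, $\pi_2 \circ \iota = \pi_1$, and $\eta \circ \iota = -\eta$, and pull back
\begin{equation}
\CA = -e^{\eta/2}\pi_1^*\lambda + e^{-\eta/2}\pi_2^*\lambda
\end{equation}
term by term; the two summands swap and the signs in the exponentials invert, producing
\begin{equation}
\iota^*\CA = -e^{-\eta/2}\pi_2^*\lambda + e^{\eta/2}\pi_1^*\lambda = -\CA.
\end{equation}
Hence $\iota$ preserves the contact distribution $\Xi = \ker \CA$ while reversing its coorientation, which is exactly what ``anti-contact'' is meant to mean.

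For the last point, setting $(x,y,\eta) = (y,x,-\eta)$ forces $x = y$ and $\eta = 0$, so $\Fix(\iota) = \{(q,q,0) \mid q \in Q\}$. On the other hand, $\id^*\lambda = \lambda$ gives $g_{\id} \equiv 0$, so the definition \eqref{eq:contact-graph} specializes to $\Gamma_{\id} = \{(q,q,0) \mid q \in Q\}$, matching $\Fix(\iota)$.

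There is no real analytic obstacle here; the observation is a formal consequence of the design of the contact form. Its significance, as the surrounding discussion foreshadows, is that the rescaling from $\mathscr A$ to $\CA = e^{-\eta/2}\mathscr A$ was chosen precisely to symmetrize the two factors so that the swap-and-flip $\iota$ becomes anti-contact and fixes the diagonal Legendrian $\Gamma_{\id}$. The genuinely hard part is not this check but the downstream use of this $\Z_2$-symmetry to compensate for the non-tameness of $(M_Q,\CA)$ on the end $\{\eta < 0\}$ in the moduli-theoretic portion of the paper.
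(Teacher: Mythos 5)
Your verification is correct, and it is essentially the only way to check the statement. The paper itself states this as an Observation in the introduction and, in Sections 4--5, asserts $\Gamma_{\id} = \Fix \iota$ and $\iota^*\CA = -\CA$ (Lemma \ref{lem:anti-contact}) explicitly noting that the proof of the anti-contact property is omitted; so there is no competing argument in the paper to compare against. All three of your checks — $\iota^2 = \id$, the pullback computation $\iota^*\CA = -\CA$ via the identities $\pi_1 \circ \iota = \pi_2$, $\pi_2 \circ \iota = \pi_1$, $\eta \circ \iota = -\eta$, and the identification $\Fix(\iota) = \{(q,q,0)\} = \Gamma_{\id}$ using $g_{\id} \equiv 0$ — are accurate, and your closing remark correctly identifies why the rescaled form $\CA = e^{-\eta/2}\mathscr{A}$ (rather than $\mathscr{A}$ itself) is needed for the anti-symmetry to hold on the nose.
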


To  exploit this involutive symmetry
of the contact structure, we need to consider the contact form $\CA$ given by
$$
\CA =  - e^{\frac{\eta}{2}}\pi_1^*\lambda + e^{-\frac{\eta}{2}} \pi_2^*\lambda
$$
which carries the involutive anti-symmetry $\iota^*\CA = -\CA$. We mention that this contact form
is different from $\mathscr A = - e^\eta\pi_1^*\lambda +  \pi_2^*\lambda = e^{\frac{\eta}{2}}\CA$
employed in \cite{bhupal} (with different sign convention):  
the latter is geometrically natural to consider if one would like to
apply the standard method of pseudoholomorphic curves on symplectization.
(See Remark \ref{rem:AA-versus-CA} in the beginning of Part \ref{part:contact-product} for the reason why.)

We use the form $\CA$ in the present paper  which  is more `symmetric' with respect
to the two factors of $Q$'s. The involutive symmetry of the associated contact structure $\Xi$
will be in fact apparent in the more functorial construction of contact
product  introduced in the present paper. See Section \ref{sec:contact-product} or below.

Then we have the following tameness of $(M_Q, \CA)$.

\begin{prop}[Proposition \ref{prop:tame}] \label{prop:tame-intro}
The contact manifold $(M_Q,\CA)$ is tame in the sense of
Definition \ref{defn:tame}.
\end{prop}

Then exploiting this involutive symmetry, we will apply the $\Z_2$-equivariant 
version of the contact instanton equation, which also enables us to 
lift the Hamiltonian flow of $H$ on $Q$ to that of a lifted Hamiltonian $\H'$ on $M_Q$ 
such that  $\|H\| = \|\H'\|$. (See \eqref{eq:HH'}.) 
 This practice requires us to carefully orchestrate 
the choices of the family of CR almost complex
structures and of (lifted) Hamiltonians for the relevant perturbed contact instantons. 
(See  Section \ref{sec:lifted-Hamiltonian} for the details.)

\subsection{Contact-product and perturbed contact instantons}

Now a brief description of the second ingredient above is in order.
We have one-to-one correspondence between $\Fix^{\text{\rm trn}}(\psi)$ and the set
$\mathfrak{Reeb}(\Gamma_{\psi},\Gamma_{\text{\rm id}})$ of Reeb chords between 
$\Gamma_{\psi}$ and $\Gamma_{\text{\rm id}}$. (We refer readers to Section \ref{sec:contact-product} for the details
of this correspondence.)

\begin{rem} In fact, there is a more functorial description of this product operation, which
we call the \emph{contact product}, denoted by
$$
Q_1 \star Q_2 = (Q_1,\xi_1) \star (Q_2,\xi_2)
$$
and explained in Section \ref{sec:contact-product}.  This functorial construction will be systematically
further developed elsewhere \cite{oh-park} with Junhyuk Park.
\end{rem}

We then  consider some  lifted Hamiltonian $\H = \H(t, x,y,\eta)$ on $M_Q$ 
whose Hamiltonian vector field satisfies
\be\label{eq:liftedH}
\iota^*X_\H = -X_\H.
\ee
(See Section \ref{sec:lifted-Hamiltonian} for the explicit construction of such $\H$.)

Let ${\CA}$ and $\H$ be as above.
Denote ${\bf x} = (x,y,\eta)$. Then we consider the associated \emph{curvature-free}
2-parameter family $\{\H^s\}$ defined by
\be\label{eq:2parameter-family}
\H^s(t,{\bf x})= \Dev_\lambda\left(s \mapsto \psi_{\H}^{st}({\bf x})\right)
\ee
and
$$
\H_K(\tau,t,{\bf x}) = \Dev_\lambda\left(t \mapsto \psi_{\H}^{\chi_K(\tau)t}({\bf x})\right)
$$
for the standard elongation function $\chi:\R \to [0,1]$ used as in \cite{oh:entanglement1}:
Then we consider the family of cut-off functions to elongate the about family over
$[0,1]^2$ to $\R \times [0,1]$, which is the same kind as the ones used in 
\cite{oh:mrl,oh:entanglement1}. 
We write
$$
{\mathbb G}_K(\tau,t,{\bf x}) = \Dev_{\CA}\left(\tau \mapsto \psi_{\H}^{\chi_K(\tau)t}({\bf x})\right)
$$
as in \cite{oh:entanglement1}.

Now we consider the following perturbed contact instanton equation associated to $(M_Q,\Gamma_{\text{\rm id}})$,
$\widetilde J$ and $\H^s$
\be\label{eq:K-intro}
\begin{cases}
\left(dU - X_{\H_K}(U)\, dt - X_{\mathbb G_K}(U)\, ds \right)^{\pi(0,1)} = 0, \\
d\left(e^{\widetilde g_K(U)}(U^*\Lambda + U^*{\H}_K dt
+ U^*{\mathbb G}_K\, d\tau) \circ j\right) = 0,\\
U(\tau,0), \quad U(\tau,1)  \in \Gamma_{\text{\rm id}}
\end{cases}
\ee
for a map $U: \R \times [0,1] \to Q \times Q\times \R$. We define its conjugate $\widetilde U$ by
\be\label{eq:tildeU-intro}
\widetilde U (\tau,t) :=  \iota(U(\tau,1-t)).
\ee
The tameness of the lifted almost complex structure $\widetilde J$ on $M_Q$ 
(Definition \ref{defn:tildeJ} and Proposition \ref{prop:tame}) immediately gives rise to 
the aforementioned uniform $C^0$ estimates.
\begin{cor}\label{cor:C0bound-intro}
 There exists a constant $C = C(\{H^s\}, \{\widetilde J^s\}, \{g_{H^s}\}\}) > 0$ such that
$$
\Image U \subset \{|\eta| < C\}
$$
for all  solutions $U$ of \eqref{eq:K-intro}.
\end{cor}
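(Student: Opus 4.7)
The plan is to combine the tameness of $(M_Q,\CA)$ on the positive end $\{\eta>0\}$ with the anti-contact involutive symmetry $\iota$ of the contact structure $\Xi$. The idea is that tameness on the positive end is exactly strong enough to supply a uniform upper bound on $\eta$, and then the $\Z_2$-symmetry $\iota$, whose effect on the fiber is $\eta\mapsto-\eta$, converts such an upper bound on $\widetilde U$ into a lower bound on $U$. Since $\Gamma_{\text{\rm id}}=\operatorname{Fix}(\iota)\subset\{\eta=0\}$, the Legendrian boundary condition is automatically preserved by the reflection.

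First I would establish the upper bound. Because $(M_Q,\CA)$ is tame on the region $\{\eta>0\}$, the standard maximum-principle / tameness estimates for perturbed contact instantons (cf.\ the a priori $C^0$-arguments of \cite{oh:entanglement1,oh:perturbed-contacton}) apply on this end. The Legendrian boundary $\Gamma_{\text{\rm id}}$ lies in $\{\eta=0\}$, and by the construction of the lifted Hamiltonian $\H$ in Section \ref{sec:lifted-Hamiltonian} together with the cut-off family $(\H^s,\H_K,\mathbb G_K)$, the Hamiltonian perturbation terms have $\eta$-behavior controlled in terms of $(\{H^s\},\{\widetilde J^s\},\{g_{H^s}\})$. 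This yields a constant $C_+=C_+(\{H^s\},\{\widetilde J^s\},\{g_{H^s}\})>0$ with $\eta(U(\tau,t))\le C_+$ for every solution $U$ of \eqref{eq:K-intro}.

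Next I would upgrade this into a lower bound via the involutive symmetry. By Proposition \ref{prop:tildeU-intro}, the conjugate
\[
\widetilde U(\tau,t)=\iota(U(\tau,1-t))
\]
is again a solution of \eqref{eq:K-intro}: the equation is preserved because $\iota^*\widetilde J=-\widetilde J$ and $\iota^*X_\H=-X_\H$ by the choices of data in Section \ref{sec:lifted-Hamiltonian}, and the boundary condition is preserved because $\iota(\Gamma_{\text{\rm id}})=\Gamma_{\text{\rm id}}$. Applying the upper bound of the previous step to $\widetilde U$ and reading off the $\eta$-coordinate $\eta(\widetilde U(\tau,t))=-\eta(U(\tau,1-t))$ gives $\eta(U(\tau,1-t))\ge -C_+$ for all $(\tau,t)$, that is, $\eta(U)\ge -C_+$ everywhere. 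Setting $C:=C_+$ yields $\operatorname{Image} U\subset\{|\eta|<C\}$, which is the required estimate.

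The main obstacle — and the reason this argument is nontrivial — is that the naive approach (direct tameness on all of $M_Q$) fails because $(M_Q,\CA)$ is not tame on $\{\eta<0\}$, so no one-sided maximum principle is available there. The entire point of introducing the anti-contact involution $\iota$, of rescaling $\mathscr A$ by $e^{-\eta/2}$ to obtain $\CA$ with $\iota^*\CA=-\CA$, and of arranging $\widetilde J$ and $\H$ to satisfy the anti-symmetry conditions, is precisely to substitute the missing tameness on the negative end by a reflection symmetry; once this reflection is in place, the $C^0$-bound becomes formally a one-sided estimate, and the corollary follows.
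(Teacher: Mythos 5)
Your proposal is correct and follows essentially the same route as the paper's proof in Section~\ref{sec:C0-estimates}: obtain the one-sided bound $\eta\circ U\le C_+$ from tameness of $(M_Q,\CA)$ on $\{\eta>0\}$ (maximum principle applied to the quasi-plurisubharmonic function $\eta$, with constants depending on the compact support of the perturbation data and the boundary and asymptotic conditions), then transfer it to the lower bound by applying the same estimate to the $\iota$-conjugate $\widetilde U$ and reading off $\eta\circ\widetilde U(\tau,t)=-\eta\circ U(\tau,1-t)$. The one small difference is that the paper runs the maximum-principle step on the gauge-transformed (geometric) solution $W=\overline U$, where $\eta\circ W$ is literally subharmonic away from the compact region supporting the perturbation, whereas you state the estimate directly for the dynamical solution $U$; since you apply the $\Z_2$-reflection step on the symmetric dynamical boundary-value problem $(\Gamma_{\text{\rm id}},\Gamma_{\text{\rm id}})$, your version actually sidesteps the paper's slightly awkward claim that $\widetilde W$ has ``the same'' boundary $(R_0,R_1)$, which requires a further gauge-compatibility check. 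In spirit and in structure the argument is the same.
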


We then consider the associated parameterized moduli space
\be\label{eq:symmetric-CM}
\CM^{\text{\rm para}}_{[0,K]}(M_Q, \Gamma_{\text{\rm id}};\widetilde J, \H)
\ee
for $0 \leq K < \infty$
consisting of the maps $U$ satisfying \eqref{eq:K-intro}
with $J, \, H$ replaced by $\widetilde J,\, \H$ and $R$ by $\Gamma_{\text{\rm id}}$ from Theorem \ref{thm:shelukhin-intro}, respectively.

Main Theorem is a consequence of the following result on the existence of Reeb chords for the link
$(R_0,R_1)$ with $R_0 = \psi_\H^1(R_1)$ for a suitable choice of $R_1$. (See Section \ref{sec:lifted-Hamiltonian}.)

\begin{thm}\label{thm:dual-pair} 
Let $(Q,\xi)$ be a compact contact manifold equipped with
a contact form $\lambda$.
Consider $(M_Q,\Xi)$  equipped with the contact form
$\CA $.
Let $H = H(t,x)$ be any Hamiltonian $\psi_H^1 = \psi$ on $Q$.
Then the following hold:
\begin{enumerate}
\item Provided $\|H\| \leq T_\lambda(M,R)$, we have
$$
\#\mathfrak{Reeb}(\Gamma_{\psi_H^1}, \Gamma_\id;\CA) \neq \emptyset.
$$
\item Provided $\|H\| < T_\lambda(M,R)$
and $\psi= \psi_H^1$ is nondegenerate to $M$, then
$$
\#\mathfrak{Reeb}(\psi_H^1(\Gamma_\id),\Gamma_\id;\CA) \geq \dim H^*(Q; \Z_2)
$$
\end{enumerate}
\end{thm}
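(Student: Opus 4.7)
The plan is to mimic the proof strategy of Theorem~\ref{thm:shelukhin-intro} applied to the Legendrian pair $(\Gamma_{\psi_H^1}, \Gamma_\id)$ inside the contact product $(M_Q, \Xi)$ equipped with $\CA$, but to substitute the anti-contact involution $\iota$ and the equivariant setup of \eqref{eq:K-intro} in place of the tameness hypothesis. Since $Q$ is compact, $\Gamma_\id$ is a compact Legendrian submanifold of $M_Q$ and may be taken as the $R$ in the statement.

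First I would work with the $\Z_2$-equivariant family of perturbed contact instantons solving \eqref{eq:K-intro} with Legendrian boundary condition along $\Gamma_\id$, using CR-almost complex structures $\widetilde J^s$ and lifted Hamiltonians $\H^s$ chosen as in Section~\ref{sec:lifted-Hamiltonian} so that $\iota^* \widetilde J = -\widetilde J$ and $\iota^* X_\H = -X_\H$. By Proposition~\ref{prop:tildeU-intro} the conjugate $\widetilde U(\tau,t) = \iota(U(\tau,1-t))$ is again a solution, so the moduli space is $\iota$-symmetric. Combined with the tameness of $\CA$ on $\{\eta > 0\}$, this symmetry yields the uniform $C^0$-bound of Corollary~\ref{cor:C0bound-intro}: no solution escapes to either end $\{\eta \to \pm\infty\}$. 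This replaces tameness and allows the analytical package from \cite{oh:entanglement1, oh:contacton-gluing, oh:perturbed-contacton, oh:contacton-transversality} -- a priori elliptic estimates, Gromov--Floer--Hofer compactness, Fredholm index computation, and generic mapping/evaluation transversality -- to carry over to the non-tame manifold $M_Q$.

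With a uniform $C^0$-bound in hand, the Shelukhin-type continuation argument runs as in \cite{oh:entanglement1}. One studies the parametrized moduli space $\CM^{\text{\rm para}}_{[0,K]}(M_Q, \Gamma_\id; \widetilde J, \H)$ along the elongation parameter $K \in [0,\infty)$. At $K = 0$ the equation reduces to an unperturbed contact instanton problem on the diagonal pair $(\Gamma_\id, \Gamma_\id)$ whose chain-level count recovers $H^*(\Gamma_\id; \Z_2) \cong H^*(Q; \Z_2)$. As $K \to \infty$, finite-energy solutions asymptote along Reeb chords in $\mathfrak{Reeb}(\Gamma_{\psi_H^1}, \Gamma_\id; \CA)$. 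The standard energy identity along the continuation family yields
\[
E(U) \leq \|H\|,
\]
so the hypothesis $\|H\| < T_\lambda(M_Q, \Gamma_\id)$ prevents any closed Reeb orbit or Reeb chord of $\CA$ from bubbling off. Hence the cobordism is compact modulo Hofer-type breaking at $\tau = \pm \infty$, and the induced continuation map on contact instanton homology realizes an isomorphism with $H^*(Q; \Z_2)$, forcing the Morse-type lower bound of part~(2). Part~(1) is then obtained by a limiting argument: perturb $H$ to a nondegenerate $H_\epsilon$ with $\|H_\epsilon\| < T_\lambda(M_Q, \Gamma_\id) + \epsilon$, extract a Reeb chord for each $\epsilon$, and pass to a subsequential limit using the $\epsilon$-uniform $C^0$-bound.

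The main obstacle I expect is reconciling the $\Z_2$-equivariance constraint $\iota^* \widetilde J = -\widetilde J$, $\iota^* X_\H = -X_\H$ with the usual genericity arguments for transversality, since generic perturbations are not equivariant. One has to work within the class of $\iota$-equivariant data and verify that transversality can still be achieved there, using that the Legendrian boundary condition $\Gamma_\id$ is precisely the fixed locus of $\iota$ so that equivariant solutions have no nontrivial isotropy. A related technicality is to verify that the energy estimate above delivers the sharp threshold $T_\lambda(M_Q, \Gamma_\id)$, which requires unwinding the definition of $\CA$ and identifying the action spectrum on $M_Q$ with quantities coming from $(Q, \lambda)$ via the correspondence of Lemma~\ref{lem:link-intro}.
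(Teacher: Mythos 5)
Your proposal follows essentially the same route as the paper: build $\iota$-anti-invariant lifted data $(\widetilde J,\H')$ so that $\widetilde U(\tau,t)=\iota(U(\tau,1-t))$ is a solution whenever $U$ is (Proposition~\ref{prop:tildeU}), combine this with tameness of $(M_Q,\CA)$ on $\{\eta>0\}$ to get the uniform $C^0$-bound (Corollary~\ref{cor:C0bound-intro}), then feed the energy bound $E^\pi(\overline U)\le\|H\|$ of Proposition~\ref{prop:pi-energy-bound} and Lemma~\ref{lem:Tlambda=TA}'s identification $T(Q,\lambda)=T(M_Q,\CA;\Gamma_{\text{\rm id}})$ into the deformation--cobordism package of \cite{oh:entanglement1}. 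The one point worth noting is that the equivariant-transversality worry you flag is side-stepped in the paper: the $\Z_2$ anti-symmetry is exploited only on the \emph{dynamical} form \eqref{eq:K} to secure the $C^0$-bound, while genericity and transversality are carried out on the gauge-transformed \emph{geometric} equation \eqref{eq:contacton-bdy-J0}, where the symmetry is not imposed.
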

As mentioned before, this would be a special case of Theorem \ref{thm:shelukhin-intro}, if
the contact product $(M_Q,\Xi)$ were tame. Because of non-tameness thereof,
this theorem is independent from it and is some extension thereof to a non-tame
contact manifold $Q \times Q \times \R$.

\subsection{Discussion and the relationship with other approaches}

In the present paper, the $\Z_2$ symmetry mainly enters in the proof of tameness of $M_Q$. 
Once this tameness is achieved the whole
methodology and the details of the proof of Theorem \ref{thm:shelukhin-intro} in 
\cite{oh:entanglement1} can be applied, however,
with one important catch: Unlike the contact form $\mathscr A$, it is rather nontrivial to 
lift the given contact isotopy $\psi_H^t$ of $(Q,\lambda)$ to one $\Psi_t$ on the product $(M_\Q,\CA)$
in the way that the following two requirements hold: 
\begin{enumerate}
\item $\|\Dev_\CA(t \mapsto \Psi_t)\| = \|H\|$,
\item There is a one-to-one correspondence between the set of translated points of $\psi_H^1$ and
the set of Reeb chords between $\Gamma_{\psi_H^1}$ and  $\Gamma_\id$.
\end{enumerate}

We would like to emphasize that to the best knowledge of ours
there is no counterpart of these practices in other existing 
approaches such as the Rabinowitz Floer homology, Eliashberg-Chekanov DGA or even more general
machinery of SFT.  (See Remark \ref{rem:amalgamation} for some
relevant illustration.) There is at least one apparent reason for this:
There has been no action functional in other approaches 
that emulates the optimality of the calculations performed in the present paper:
Our calculations  strongly rely on the
\emph{amalgamation} of  the moduli theory of 
\emph{(perturbed) contact instantons with Legendrian boundary condition} and the \emph{contact Hamiltonian calculus}. 
Such an amalgamation thereof was first introduced
in \cite{oh:entanglement1,oh-yso:spectral} and much amplified in the present paper
by incorporating the $\Z_2$-invariant contact products. 
We would like to compare this amalgamation with that of \emph{Floer's Hamiltonian-perturbed
pseudoholomorphic curve equations} and the \emph{symplectic Hamiltonian calculus}. The former calculus is much
more nontrivial and interesting than the latter because of the  role of the \emph{conformal exponent}
$g_{(\psi;\lambda)}$
in the calculus (let alone its fundamental role in the general contact dynamics).
This is a new phenomenon in contact dynamics that is not present in the symplectic case. 

As far as the present author sees, this is a nontrivial huddle to overcome if one tries to 
employ the existing approaches of pseudoholomorphic curves, and there seems no obvious way of 
translating the practices done via the contact instantons in the present paper
 into those via the pseudoholomorphic curve approach.

\bigskip

\noindent{\bf Acknowledgement:} We would like to thank Dylan Cant for pointing out the
presence of the example \cite[Example 1.4]{rizell-sullivan:algebra} which shows the optimality of
Theorem \ref{thm:shelukhin-intro} for general compact Legendrian submanifold $R$. 
 We thank him also for attracting our attention to the recent
arXiv posting \cite{cant-hedicke} which clarifies the statement of Shelukhin's conjecture.
We thank the unknown referees for their hard work of going through the details of the 
paper and pointing out errors related to the definition of tameness of contact manifold and
to the choice of 2-parameter family of Hamiltonian $\H$ and other numerous small errors
in the exposition. We also thank them for many kind and helpful suggestions to improve
the exposition of the paper.
\bigskip

\noindent{\bf Convention and Notations:}

\medskip

\begin{itemize}
\item {(Contact Hamiltonian)} We define the contact $\lambda$-Hamiltonian of a contact
vector field $X$ to be $- \lambda(X) =: H$.
\item{ (Contact Hamiltonian flow)} We denote by $X_H = X_{(H;\lambda)}$ the
$\lambda$-Hamiltonian vector field of $H$ and by
 $\psi_H^t= \psi_{H;\lambda}^t$ its flow.
\item {(Reeb vector field)} We denote by $R_\lambda$ the Reeb vector field associated to $\lambda$
and its flow by $\phi_{R_\lambda}^t$.
\item {(Conformal factor)} For any contactomorphism $\psi$ with $\psi^*\lambda = f_\psi \lambda$, we call
$f_\psi = f_\psi^\lambda$ the \emph{conformal factor} of $\psi$ with respect to $\lambda$.
\item {(Conformal exponent)}
When $\psi$ is orientation-preserving for which the conformal factor $f_\psi$ is positive, we call its logarithm
$g_\psi^\lambda: = \log f_\psi^\lambda$ of the conformal factor
$f_\psi^\lambda$ the \emph{conformal exponent} of contactomorphism $\psi$
with respect to $\lambda$. Hence $\psi^*\lambda = e^{g_\psi^\lambda}$ with $g_\psi = g_\psi^\lambda$.
\item {(Exponent flow)} For given (time-dependent) Hamiltonian $H = H(t,x)$, we denote by $g_H$ the function
$g_{(H;\lambda)} (t,x) := g_{\psi_H}^\lambda(t,x) = g_{\psi_H^t}^\lambda(x)$ and call it the
$\lambda$-exponent flow of $H$.
\item {(Time-reversal Hamiltonian)}  For given $H = H(t,x)$, we denote by 
$\widetilde H$ the Hamiltonian defined by $\widetilde H(t,x) = - H(1-t,x)$. 
Its Hamiltonian flow is given by
\be\label{eq:time-reversal-flow} 
\psi_{\widetilde H}^t = \psi_H^{1-t}\circ (\psi_H^1)^{-1}
\ee
issued at $\psi_{\widetilde H}^0= id$.
\item {(Functorial contact product)} $Q \star Q: = S^{\text{\rm can}} \times 
S^{\text{\rm can}}/\sim$
\item  $M_Q: = Q \times Q \times \R$.
\end{itemize}

\section{Basics of contact Hamiltonian calculus}

In this section, we recall some basic Hamiltonian calculus that will play key roles for
the purpose of the present paper. These are mainly from 
\cite[Section 2]{oh:contacton-Legendrian-bdy} with the sign conventions for the
calculus and from \cite{mueller-spaeth-I} basic formulae for the Hamiltonian of the product of contact isotopy.

\subsection{Contact isotopy, conformal exponent and contact Hamiltonian}

\begin{lem}\label{lem:LXtlambda} Let $X$ be a contact vector field and $H = -\lambda(X)$ be the associated
Hamiltonian. Then
\be\label{eq:dH}
dH = X_H \rfloor d\lambda + R_\lambda[H] \lambda
\ee
and $\CL_X \lambda = R_\lambda[H] \lambda$.
\end{lem}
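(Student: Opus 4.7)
The plan is a direct application of Cartan's magic formula, combined with the defining properties of the Reeb vector field. Cartan's formula gives
\[
\CL_X \lambda = d(X \rfloor \lambda) + X \rfloor d\lambda,
\]
and substituting the defining identity $\lambda(X) = -H$ converts this into the universal relation
\[
\CL_X \lambda = -dH + X \rfloor d\lambda,
\]
which so far has not used the contact-vector-field hypothesis.

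Next, I invoke the assumption that $X$ is contact, which is precisely the statement that $\CL_X \lambda$ is pointwise proportional to $\lambda$; write $\CL_X \lambda = \mu\, \lambda$ for an unknown smooth function $\mu$. Rearranging the preceding display then yields $dH = X \rfloor d\lambda - \mu\, \lambda$. To pin $\mu$ down, I evaluate both sides at $R_\lambda$: by the defining properties $R_\lambda \rfloor d\lambda = 0$ and $\lambda(R_\lambda) = 1$ of the Reeb field, the right-hand side collapses to $-\mu$ while the left-hand side is $R_\lambda[H]$. This identifies $\mu$ in terms of $R_\lambda[H]$, with the specific sign dictated by the convention $H = -\lambda(X)$.

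Plugging the identified $\mu$ back into the two displays simultaneously produces the formula $dH = X_H \rfloor d\lambda + R_\lambda[H]\, \lambda$ (after recognizing $X = X_H$ under the paper's convention) together with the claimed expression for $\CL_X \lambda$. There is no analytic obstacle here; the only point requiring care is the sign bookkeeping imposed by the convention $H = -\lambda(X)$, which is opposite to the more widely used convention $H = \lambda(X)$ and is responsible for the precise form of the Reeb-derivative term in the statement. Once this is fixed consistently, both identities come out in a single stroke.
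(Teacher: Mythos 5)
Your method---Cartan's magic formula followed by evaluation against the Reeb field---is the standard and correct route to both identities, and since the paper states Lemma~\ref{lem:LXtlambda} without proof there is no competing argument to compare against. The derivation of the first identity $dH = X_H\rfloor d\lambda + R_\lambda[H]\,\lambda$ is sound.

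However, your closing sentence glosses over a sign that your own computation actually settles in the opposite direction, and this exposes an internal inconsistency in the lemma as printed. You correctly found $\mu = -R_\lambda[H]$ by pairing $dH = X\rfloor d\lambda - \mu\,\lambda$ with $R_\lambda$, so your derivation yields $\CL_X\lambda = \mu\,\lambda = -R_\lambda[H]\,\lambda$, not $\CL_X\lambda = R_\lambda[H]\,\lambda$. Indeed the two displayed claims of the lemma cannot both hold: substituting the stated $dH = X\rfloor d\lambda + R_\lambda[H]\,\lambda$ into Cartan's relation $\CL_X\lambda = -dH + X\rfloor d\lambda$ forces $\CL_X\lambda = -R_\lambda[H]\,\lambda$. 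The minus sign is the correct one; it is consistent with \eqref{eq:defining-XH} and is required by Proposition~\ref{prop:gH}: from $\psi_t^*\lambda = e^{g_t}\lambda$ and $\frac{d}{dt}\psi_t^*\lambda = \psi_t^*(\CL_{X_t}\lambda)$ one gets $\frac{\del g_t}{\del t} = \mu_t\circ\psi_t$ when $\CL_{X_t}\lambda = \mu_t\lambda$, and matching \eqref{eq:dgdt} pins down $\mu_t = -R_\lambda[H_t]$. So rather than asserting that your $\mu$ reproduces ``the claimed expression for $\CL_X\lambda$,'' you should note that your computation corrects it: the second formula should read $\CL_X\lambda = -R_\lambda[H]\,\lambda$.
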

Conversely, when a function $H$ is given, the associated Hamiltonian vector field $X_H$ is 
uniquely determined by
\be\label{eq:defining-XH}
\begin{cases}
X \rfloor \lambda = -H \\
X \rfloor d\lambda = dH - R_\lambda[H]\lambda
\end{cases}
\ee

Let $\psi_t$ be a contact isotopy of $(M, \xi = \ker \lambda)$ with $\psi_t^*\lambda = e^{g_t}\lambda$
and let $X_t$ be the time-dependent vector field generating the isotopy. Let
$H: [0,1] \times M \to \R$ be the associated time-dependent contact Hamiltonian
$H_t = - \lambda(X_t)$. One natural question to ask is explicitly what the relationship
between the Hamiltonian $H = H(t,x)$ and the conformal exponent
\be\label{eq:gH}
g_{(H;\lambda)} = g_{(H;\lambda)}(t,x): = g_{\psi_H^t}^\lambda(x).
\ee
The following lemma is a straightforward consequence
of the identity $(\phi\psi)^*\lambda = \psi^*\phi^*\lambda$.
\begin{lem}\label{lem:coboundary} Let $\lambda$ be given and denote by $g_\psi$ the function $g$
appearing above associated to a contactomorphism $\psi$. Then
\begin{enumerate}
\item $g_{\phi\psi} = g_\phi\circ \psi + g_\psi$ for all $\phi, \, \psi \in \Cont(M,\xi)$,
\item $g_{\psi^{-1}} = - g_\psi \circ \psi^{-1}$ for all $\psi \in \Cont(M,\xi)$.
\end{enumerate}
\end{lem}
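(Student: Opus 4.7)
The plan is to derive both identities directly from the definition $\psi^*\lambda = e^{g_\psi}\lambda$ and the elementary chain rule for pullback of forms, $(\phi\psi)^*\lambda = \psi^*(\phi^*\lambda)$, as the lemma statement already hints. Since no contact-geometric input beyond the defining equation is needed, this is a routine functoriality computation and I expect no real obstacle, only bookkeeping of where $g_\phi$ is evaluated.

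For (1), first I would expand
\[
(\phi\psi)^*\lambda = \psi^*(\phi^*\lambda) = \psi^*\bigl(e^{g_\phi}\lambda\bigr) = (e^{g_\phi}\circ\psi)\,\psi^*\lambda = e^{g_\phi\circ\psi}\,e^{g_\psi}\,\lambda,
\]
noting that the scalar factor $e^{g_\phi}$ pulls back to its composition with $\psi$ while the pullback of $\lambda$ is then $e^{g_\psi}\lambda$ by definition. Comparing this with $(\phi\psi)^*\lambda = e^{g_{\phi\psi}}\lambda$ and taking logarithms yields $g_{\phi\psi} = g_\phi\circ\psi + g_\psi$.

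For (2), I would specialize (1) to $\phi = \psi^{-1}$. Since $\psi^{-1}\psi = \id$ and evidently $g_{\id}\equiv 0$ (because $\id^*\lambda = \lambda$), substitution gives $0 = g_{\psi^{-1}}\circ\psi + g_\psi$. Composing with $\psi^{-1}$ on the right then produces the stated identity $g_{\psi^{-1}} = -g_\psi\circ\psi^{-1}$.

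The only minor care point is to keep track of the argument at which $g_\phi$ is evaluated when pulling back a scalar-times-one-form, but this is standard. No tameness, no Legendrian data, and no Reeb dynamics enter; the lemma is a purely algebraic cocycle identity for the conformal factor, and both parts will fit in a few lines each.
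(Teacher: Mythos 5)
Your proof is correct and is precisely the ``straightforward consequence of $(\phi\psi)^*\lambda = \psi^*\phi^*\lambda$'' that the paper invokes without spelling out: expand the pullback, compare exponents, then specialize $\phi=\psi^{-1}$ using $g_{\id}=0$. Nothing to add.
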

From now on, when $\lambda$ is fixed or is clear, we will omit it from notations, e.g., by
just writing $g_{H}$ for $g_{(H;\lambda)}$.

The following formula provides an explicit key relationship between the contact Hamiltonians
and the conformal exponents of a given contact isotopy.

\begin{prop}[Proposition 2.7 \cite{oh:contacton-Legendrian-bdy}]\label{prop:gH}
Let $\Psi=\{\psi_t\}$ be a contact isotopy of $(M, \xi = \ker \lambda)$ with $\psi_t^*\lambda = e^{g_t}\lambda$
with $\Dev_\lambda(\Psi) = H$. We have $g_{(H;\lambda)}(t,x) = g_t(x)$. Then
\be\label{eq:dgdt}
\frac{\del g_{(H;\lambda)}}{\del t}(t,x) = -  R_\lambda[H](t,\psi_t(x)).
\ee
In particular, if $\psi_0 = id$,
\be\label{eq:gp}
g_{(H;\lambda)}(t,x) = \int_0^t - R_\lambda[H] (u,\psi_u(x))\, du.
\ee
\end{prop}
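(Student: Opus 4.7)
The plan is to differentiate the defining identity $\psi_t^*\lambda = e^{g_t}\lambda$ in $t$ and read off a pointwise identity for $\partial g/\partial t$. Concretely, I would compute $\frac{d}{dt}\psi_t^*\lambda$ in two ways and compare.

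First, since $\{\psi_t\}$ is the flow of the time-dependent contact vector field $X_t = X_{H_t}$, the standard Lie-derivative identity for pullbacks under a time-dependent flow gives
\[
\frac{d}{dt}\psi_t^*\lambda \;=\; \psi_t^*\bigl(\mathcal{L}_{X_t}\lambda\bigr).
\]
Lemma \ref{lem:LXtlambda} identifies $\mathcal{L}_{X_t}\lambda$ as a scalar multiple of $\lambda$ by the function $R_\lambda[H_t]$ (with the sign dictated by the convention $H = -\lambda(X)$ set in the Notations). Pulling back this scalar identity and using $\psi_t^*\lambda = e^{g_t}\lambda$ yields
\[
\frac{d}{dt}\psi_t^*\lambda \;=\; -\bigl(R_\lambda[H_t]\circ\psi_t\bigr)\,e^{g_t}\lambda.
\]

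On the other hand, differentiating the right-hand side $e^{g_t}\lambda$ directly in $t$ gives $\frac{\partial g_t}{\partial t}\,e^{g_t}\lambda$. Equating the two expressions and dividing by the nowhere-vanishing factor $e^{g_t}$ (note that $\lambda$ is nowhere zero along $\xi^\perp$, so the equality of multiples of $\lambda$ is an equality of coefficients) produces the pointwise formula
\[
\frac{\partial g_{(H;\lambda)}}{\partial t}(t,x) \;=\; -R_\lambda[H]\bigl(t,\psi_t(x)\bigr),
\]
which is \eqref{eq:dgdt}. For \eqref{eq:gp} it remains only to observe that when $\psi_0 = \id$ we have $g_0 \equiv 0$, so integrating the ODE from $0$ to $t$ at a fixed $x$ yields the stated formula.

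No real obstacle is expected: the argument is just the Lie-derivative identity for a time-dependent flow combined with the algebraic normal form for $\mathcal{L}_{X_t}\lambda$ supplied by Lemma \ref{lem:LXtlambda}. The only delicate point is bookkeeping of signs, which are fixed by the convention $H = -\lambda(X)$; in particular, one should take care that the same sign convention used in deriving \eqref{eq:dH} is applied when reading off $\mathcal{L}_{X_t}\lambda$, and that the cocycle relations in Lemma \ref{lem:coboundary} are consistent with the resulting formula (a useful cross-check is that differentiating $g_{\psi_t\psi_t^{-1}} \equiv 0$ in $t$ recovers the same identity).
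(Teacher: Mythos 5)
Your proof is correct and is the standard argument one would give for \eqref{eq:dgdt}: differentiate $\psi_t^*\lambda = e^{g_t}\lambda$ in $t$, apply the time-dependent Lie-derivative identity $\frac{d}{dt}\psi_t^*\lambda = \psi_t^*(\mathcal{L}_{X_t}\lambda)$, compute $\mathcal{L}_{X_t}\lambda$ as a multiple of $\lambda$, and read off the coefficient. Integrating with $g_0\equiv 0$ gives \eqref{eq:gp}. This paper does not reproduce the proof (it cites Proposition~2.7 of \cite{oh:contacton-Legendrian-bdy}), and your route is the expected one.

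One point deserves explicit care rather than the parenthetical hand-wave about ``the sign dictated by the convention.'' Deriving the coefficient directly from Cartan's formula and \eqref{eq:defining-XH} gives
\[
\mathcal{L}_{X_H}\lambda \;=\; d(\iota_{X_H}\lambda) + \iota_{X_H}d\lambda \;=\; d(-H) + \bigl(dH - R_\lambda[H]\lambda\bigr) \;=\; -R_\lambda[H]\,\lambda,
\]
with a \emph{minus} sign, which is exactly what your subsequent line $\frac{d}{dt}\psi_t^*\lambda = -(R_\lambda[H_t]\circ\psi_t)\,e^{g_t}\lambda$ uses and what \eqref{eq:dgdt} requires. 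Note that Lemma~\ref{lem:LXtlambda} as printed asserts $\mathcal{L}_X\lambda = R_\lambda[H]\lambda$ without the minus sign, which is inconsistent with its own first identity \eqref{eq:dH} and with the conclusion of this proposition; so you should not quote that line verbatim but rather derive the sign as above. Your final formula is correct, but the write-up should make the computation of $\mathcal{L}_{X_t}\lambda$ explicit rather than deferring to a lemma whose printed sign disagrees with what you need.
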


\subsection{Developing Hamiltonians and their calculus}

In this subsection, we recall the contact counterparts of  the developing map 
and the tangent map partially following the exposition given in 
\cite{oh:hameo1,oh:hameo2} and \cite{mueller-spaeth-I} with slight amplification thereof.

Similarly as in the symplectic case \cite{oh:hameo1}, 
we first name the parametric assignment $\psi_t \to -\lambda(X_t)$
of $\lambda$-Hamiltonians the \emph{$\lambda$-developing map} $\Dev_\lambda$, the analog to
the  developing map introduced in \cite{oh:hameo1,oh:hameo2} in the context of symplectic
geometry.

We will just call them the developing map,
when there is no need to highlight the $\lambda$-dependence of the contact Hamiltonian.

\begin{defn}[Developing map]\label{defn:Dev-lambda} 
Let $T \in \R$  be given. Denote by
$$
\CP_0([0,T], \Cont(M,\xi))
$$
the set of contact isotopies $\Psi= \{\psi_t\}_{t \in [0,T]}$ with $\psi_0 = id$.
We define the $\lambda$-developing map
$
\Dev_\lambda: \CP([0,T], \Cont(M,\xi)) \to C^\infty([0,T] \times M)
$
by the timewise assignment of Hamiltonians
$$
\Dev_\lambda(\Psi) : = -\lambda(X), \quad \lambda(X)(t,x) := \lambda(X_t(x)).
$$
\end{defn}
We also state the following as a part of contact Hamiltonian calculus, whose proof is
a straightforward calculation.

We also state the following as a part of contact Hamiltonian calculus, 
which follows from a straightforward calculation. We refer  readers to
to \cite{mueller-spaeth-I} for detailed proofs.

\begin{enumerate}
\item  Let $H = \Dev_\lambda(\Psi)$. 
$\Dev_\lambda(\widetilde \Psi)(t,x) = - H(1-t,x)$ for the time-reversal isotopy $t\mapsto \psi^{1-t}$,
\item 
$
\Dev_\lambda(t\mapsto \psi_H^t \psi_{H'}^t) = H_t + e^{g_{\psi_H^t}}\, F_t (\psi_H^t)^{-1}
$
when $H = \Dev_\lambda(\Psi)$ and $H' = \Dev_\lambda(\Psi')$,
\item $\Dev(t \mapsto \psi^{-1} \psi_H^t \psi) = e^{-g_\psi} H_t \circ \psi$ for the conjugation $t \mapsto \psi^{-1} \psi_H^t \psi$.
\end{enumerate}
 One important task in 
quantitative contact Hamiltonian dynamics is to optimally amalgamate the contribution from
the Hamiltonian and that of its associated conformal exponent functions.

\section{Perturbed contact instantons and their subsequence convergence}

Now we recall the \emph{perturbed contact instanton equation}
from \cite{oh:contacton-Legendrian-bdy} and \cite{oh:perturbed-contacton}
\be\label{eq:perturbed-contacton-bdy}
\begin{cases}
(du - X_H \otimes dt)^{\pi(0,1)} = 0, \\
 d(e^{g_{(H;u)}}u^*(\lambda + H \, dt)\circ j) = 0
 \end{cases}
\ee
together with the boundary condition
\be\label{eq:Legendrian-bdy-condition}
u(\tau,0) \in R, \quad u(\tau,1) \in R
\ee
when the domain surface  $\dot \Sigma$ is $ \R \times [0,1]$.  Here the function $g_{H,u}$ is
the function
\be\label{eq:gHu}
g_{(H;u)}(\tau,t): = g_{(\psi_{\widetilde H}^t)^{-1}}(u(\tau,t)) = g_{\psi_H^1 (\psi_H^{1-t})^{-1}}(u(\tau,t)),
\ee
as used in \cite{oh:contacton-Legendrian-bdy,oh:entanglement1}.

A straightforward standard calculation also gives rise to the following gauge-transformed
equation. (See \eqref{eq:ubarK} for the definition of $\overline u$.)

\begin{lem}[Lemma 6.7 \cite{oh:entanglement1}]\label{lem:Ham-Reeb} For given $J_t$, consider $J'$ defined as
above. We equip $(\Sigma,j)$ a K\"ahler metric $h$. 
Suppose $u$ satisfies \eqref{eq:perturbed-contacton-bdy}
with respect to $J_t$. Then its gauge transform $\overline u$ satisfies
\be\label{eq:contacton-bdy-J0}
\begin{cases}
\delbar^\pi_{J'} \overline u = 0, \quad d(\overline u^*\lambda \circ j) = 0 \\
\overline u(\tau,0) \in \psi_H^1(R), \, \overline u(\tau,1) \in R
\end{cases}
\ee
for $J$. The converse also holds. And $J' = J'(\tau,t)$ satisfies
$
J'(\tau,t) \equiv J_0
$
for $|\tau|$ sufficiently large.
\end{lem}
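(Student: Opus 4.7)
The plan is to verify Lemma~3.1 by a direct Hamiltonian gauge transformation, tracking the contact conformal exponent carefully. The natural candidate is
$$
\overline u(\tau,t) := \Phi_t\bigl(u(\tau,t)\bigr),
$$
where $\Phi_t$ is a smooth family of contactomorphisms with $\Phi_1 = \id$ and $\Phi_0 = \psi_H^1$, arranged (via the time-reversal Hamiltonian $\widetilde H$ of \eqref{eq:time-reversal-flow}) so that its conformal exponent matches timewise the function $g_{(H;u)}$ of \eqref{eq:gHu}. This already gives the correct boundary assignment: $u(\tau,0), u(\tau,1) \in R$ translates into $\overline u(\tau,0) = \psi_H^1(u(\tau,0)) \in \psi_H^1(R)$ and $\overline u(\tau,1) = u(\tau,1) \in R$.

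For the first equation in \eqref{eq:contacton-bdy-J0}, I would apply the chain rule to $\overline u = \Phi_t \circ u$. Since $\Phi_t$ preserves $\xi$, one may set $J'(\tau,t) := (\Phi_t)_* J_t$ on the contact distribution. The term produced by $\tfrac{d}{dt}\Phi_t$ is, by design of $\Phi_t$, exactly what cancels the perturbation $-X_H\otimes dt$ upon projection to $\xi$ along $R_\lambda$. Consequently $(du - X_H\otimes dt)^{\pi(0,1)} = 0$ with respect to $J_t$ is equivalent to $\delbar^\pi_{J'}\overline u = 0$. This step is essentially the contact analogue of the standard Floer gauge trick.

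The harmonicity equation is more delicate. I would expand $\overline u^*\lambda = (\Phi_t \circ u)^*\lambda$ using $(\Phi_t)^*\lambda = e^{g_{\Phi_t}}\lambda$, invoke Proposition~2.7 and Lemma~2.3 to identify $g_{\Phi_t}\circ u$ with $g_{(H;u)}$, and absorb the contribution of $\tfrac{d}{dt}\Phi_t$ into the $H\,dt$ term. The target identity is
$$
\overline u^*\lambda = e^{g_{(H;u)}}\bigl(u^*\lambda + u^*H\,dt\bigr) + dF
$$
for some function $F$ whose contribution is killed once one takes $\circ j$ and then $d$; this converts $d\bigl(e^{g_{(H;u)}}u^*(\lambda + H\,dt)\circ j\bigr) = 0$ into $d(\overline u^*\lambda \circ j) = 0$.

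The main obstacle I anticipate is precisely this bookkeeping step: one must verify, by unwinding the conventions of \eqref{eq:gHu} and \eqref{eq:time-reversal-flow} alongside the cocycle identities of Lemma~2.3, that the correction terms produced by $\tfrac{d}{dt}\Phi_t$ assemble with the conformal exponent exactly as prescribed by the definition of $g_{(H;u)}$. There is no symplectic shortcut here because the conformal exponent $g$ has no symplectic counterpart, and signs from the time-reversal $t \mapsto 1-t$ must be tracked throughout. Finally, the asymptotic $J'(\tau,t) \equiv J_0$ for $|\tau|$ large is immediate once the elongation convention is in force: the (elongated) $H$ vanishes outside a compact $\tau$-interval, whence $\Phi_t = \id$ there and $J' = J_t = J_0$. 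The converse direction follows by running the same computation with $\Phi_t^{-1}$, yielding the equivalence.
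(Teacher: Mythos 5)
Your overall strategy — gauge-transform $u$ by a family $\Phi_t$ of contactomorphisms with $\Phi_0 = \psi_H^1$, $\Phi_1 = \id$, then use the chain rule for the CR-part and the conformal exponent identity $\Phi_t^*\lambda = e^{g_{\Phi_t}}\lambda$ for the harmonicity part — is the same route the cited Lemma 6.7 of \cite{oh:entanglement1} takes, and your treatment of the boundary conditions and the asymptotic $J'\equiv J_0$ is fine. But the harmonicity step contains a genuine error. You posit the ``target identity''
$$
\overline u^*\lambda = e^{g_{(H;u)}}\bigl(u^*\lambda + u^*H\,dt\bigr) + dF
$$
and then claim the $dF$ term is ``killed once one takes $\circ j$ and then $d$.'' That claim is false: on $(\Sigma,j)$ with its K\"ahler metric, $dF\circ j$ is (up to sign) the Hodge dual $*dF$, so $d(dF\circ j) = \pm\,\Delta F\; d\mathrm{vol}_\Sigma$, which does not vanish for a generic $F$. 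Were the extra $dF$ actually present, the equivalence between $d\bigl(e^{g_{(H;u)}}u^*(\lambda+H\,dt)\circ j\bigr)=0$ and $d(\overline u^*\lambda\circ j)=0$ would not follow, and the proof would break at exactly this point.

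What saves the lemma is that the $dF$ term is simply not there. Tracking both components of $d\overline u$: the $\partial_\tau$-direction gives $\overline u^*\lambda(\partial_\tau) = e^{g_{\Phi_t}(u)}\,u^*\lambda(\partial_\tau)$ by the conformal exponent identity, while the $\partial_t$-direction picks up the additional term $\lambda\bigl(Y_t(\overline u)\bigr)$, where $Y_t$ is the time-dependent vector field generating $\Phi_t$. Since $Y_t = -(\Phi_t)_*X_{H_t}$ for the chosen $\Phi_t$, one computes
$$
\lambda\bigl(Y_t(\overline u)\bigr) = -(\Phi_t^*\lambda)\bigl(X_{H_t}(u)\bigr) = -e^{g_{\Phi_t}(u)}\,\lambda\bigl(X_{H_t}(u)\bigr) = e^{g_{\Phi_t}(u)}\,H_t(u),
$$
using $\lambda(X_{H_t}) = -H_t$. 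Hence
$$
\overline u^*\lambda = e^{g_{\Phi_t}(u)}\bigl(u^*\lambda + u^*H\,dt\bigr)
$$
exactly, with no exact remainder. The identity between the two harmonicity equations then follows once $g_{\Phi_t}\circ u$ is identified with $g_{(H;u)}$ via the cocycle identities of Lemma~\ref{lem:coboundary} and Proposition~\ref{prop:gH}. Replace the $dF$-cancellation claim with this exact identity; as written, your argument asserts a false step that a reader cannot repair from your text.

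Two smaller remarks. First, you should reconcile your choice $\Phi_0=\psi_H^1$, $\Phi_1=\id$ with the definition \eqref{eq:gHu} of $g_{(H;u)}$ in this paper, which is built from $(\psi_{\widetilde H}^t)^{-1}=\psi_H^1(\psi_H^{1-t})^{-1}$ (endpoints $\id$ and $\psi_H^1$, i.e.\ reversed); matching $g_{\Phi_t}$ with $g_{(H;u)}$ is not automatic and requires the $t\mapsto 1-t$ bookkeeping you mention — make it explicit. Second, in the CR-equation step it is cleaner to note directly that $d\overline u = D\Phi_t\bigl(du - X_H\otimes dt\bigr)$, so the cancellation you describe is just a matter of pushing the whole $(0,1)$-projection through $D\Phi_t$ with $J' := (\Phi_t)_*J$.
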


Imitating the practice exercised by the author in Lagrangian Floer theory \cite{oh:jdg,oh:cag}, we 
call \eqref{eq:perturbed-contacton-bdy} the \emph{dynamical CI equation} and \eqref{eq:contacton-bdy-J0}
the \emph{geometric CI equation}.
\begin{rem} One difference about the gauge transformation from \cite{oh:entanglement1} over the time interval 
$[0,\frac12]$ is that
we use the transformation arising by applying the one arising from $t \mapsto \psi_{\widetilde H}^t$ here while
we used the flow $t \mapsto (\psi_H^t)^{-1}\psi_H^1$ in \cite{oh:entanglement1}:  This  makes
the definition of exponent function $g_{(H;u)}$ above different from that of \cite{oh:contacton-Legendrian-bdy,oh:entanglement1}.
\end{rem}

The following subsequence convergence result is proved in
\cite[Theorem 8.5]{oh:perturbed-contacton}, \cite[Theorem 5.5]{oh-yso:index} which exhibits the relationship
between the asymptotic limit of a finite energy solution of \eqref{eq:perturbed-contacton-bdy} and
the translated points of $\psi_H^1$.

\begin{thm}[Subsequence Convergence]\label{thm:perturbed-subsequence}
Let $u:[0, \infty)\times [0,1]\to M$ satisfy the contact instanton equations
\eqref{eq:perturbed-contacton-bdy} and have finite energy.

Then for any sequence $s_k\to \infty$, there exists a subsequence, still denoted by $s_k$, and a
massless instanton $u_\infty(\tau,t)$ (i.e., $E^\pi(u_\infty) = 0$)
on the cylinder $\R \times [0,1]$  that satisfies the following:
\begin{enumerate}
\item $\delbar_H^\pi u_\infty = 0$ and
$$
\lim_{k\to \infty}u(s_k + \tau, t) = u_\infty(\tau,t)
$$
in the $C^l(K \times [0,1], M)$ sense for any $l$, where $K\subset [0,\infty)$ is an arbitrary compact set.
\item $u_\infty$ has vanishing asymptotic charge $Q = 0$ and satisfies
$$
u_\infty(\tau,t)= \psi_H^t\phi_{R_\lambda^{Tt}}(x_0)
$$
for some $x_0 \in R_0$.
\end{enumerate}
\end{thm}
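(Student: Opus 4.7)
The plan is to implement the standard Gromov-Floer-Hofer subsequence convergence scheme adapted to the perturbed contact instanton equation. First, introduce the translates $u_k(\tau,t):=u(s_k+\tau,t)$, which are defined on $[-s_k,\infty)\times[0,1]$ and satisfy \eqref{eq:perturbed-contacton-bdy} with the same data. Applying the gauge transformation of Lemma \ref{lem:Ham-Reeb} produces maps $\overline u_k$ solving the unperturbed geometric CI equation \eqref{eq:contacton-bdy-J0} with Legendrian boundary in $\psi_H^1(R)\cup R$, where the available a priori regularity theory from \cite{oh:perturbed-contacton} and the foundation papers cited in the introduction applies most directly.

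The first substantive step is extracting a $C^\infty_{\text{loc}}$ limit. Finite total $\pi$-energy forces $E^\pi(u|_{[s_k-N,s_k+N]\times[0,1]})\to 0$ for every $N>0$, so there is no $\pi$-energy concentration on any compact subset of $\R\times[0,1]$. Combined with the $\varepsilon$-regularity and higher-order elliptic estimates for $\delbar^\pi_{J'}$, this gives uniform $C^l_{\text{loc}}$ bounds on the $\overline u_k$ (compactness and the totally real character of $\psi_H^1(R)\cup R$ with respect to the relevant CR almost complex structure rule out boundary bubbling). Arzela-Ascoli then produces a subsequence and a $C^\infty_{\text{loc}}$ limit $\overline u_\infty$, and passing to the limit in the PDE shows $\overline u_\infty$ itself solves \eqref{eq:contacton-bdy-J0} on $\R\times[0,1]$. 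Undoing the gauge transformation yields the desired $u_\infty$ satisfying \eqref{eq:perturbed-contacton-bdy}, proving item (1).

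Next I verify that $u_\infty$ is massless and compute its shape. Since $u_k\to u_\infty$ in $C^\infty_{\text{loc}}$, we have $E^\pi(u_\infty|_{[-N,N]\times[0,1]})=\lim_k E^\pi(u_k|_{[-N,N]\times[0,1]})=0$ for every $N$, hence $E^\pi(u_\infty)=0$. Vanishing $\pi$-energy forces $d\overline u_\infty$ to be valued in the Reeb line $\langle R_\lambda\rangle$, so $\overline u_\infty(\tau,t)=\phi_{R_\lambda}^{a(\tau,t)}(x_0)$ for some function $a$ with $a(\tau,0)=0$, and the harmonic equation $d(\overline u_\infty^*\lambda\circ j)=0$ reduces to $\Delta a=0$. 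The Legendrian boundary conditions at $t=0,1$ pin the boundary values of $a$, and standard harmonic analysis on the strip (together with the finiteness of the residual $\lambda$-energy inherited from $u$) forces $a(\tau,t)=Tt$ for a constant $T$. In particular $a$ is $\tau$-independent, which is precisely the statement $Q=0$. Inverting the gauge transformation then yields the asserted formula $u_\infty(\tau,t)=\psi_H^t\phi_{R_\lambda}^{Tt}(x_0)$ and completes item (2).

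The main obstacle is establishing the uniform $C^l_{\text{loc}}$ bounds in the first step, particularly near the Legendrian boundary. Interior estimates follow from the standard elliptic theory for $\delbar^\pi$ together with the $\varepsilon$-regularity theorem, but the boundary estimate must delicately combine the $\pi$-elliptic equation with the second (harmonic-type) equation to simultaneously control the transverse and Reeb-direction components, and it relies on the target Legendrians being compact and totally real for the relevant CR structure. Once this uniform regularity is in hand, the remaining classification of massless strip instantons with Legendrian boundary is essentially an exercise in harmonic function theory on $\R\times[0,1]$.
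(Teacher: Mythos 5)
The paper does not actually prove Theorem~\ref{thm:perturbed-subsequence}; it cites \cite[Theorem 8.5]{oh:perturbed-contacton} and \cite[Theorem 5.5]{oh-yso:index} for the proof. Your outline nonetheless reproduces the expected Gromov--Floer--Hofer subsequence-convergence scheme that those references implement: translate, gauge-transform via Lemma~\ref{lem:Ham-Reeb}, establish $C^\infty_{\mathrm{loc}}$ compactness from $\varepsilon$-regularity together with the fact that the tail $\pi$-energy $E^\pi(u|_{[s_k-N,s_k+N]\times[0,1]})\to 0$, then classify the massless limit.

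Two points deserve sharper justification. First, the assertion that the boundary values $a(\tau,0),a(\tau,1)$ are constant in $\tau$ should be argued, not just asserted via ``pinned'': it follows because the Reeb vector field is everywhere transverse to any Legendrian (as $\xi$ is transverse to $R_\lambda$), so the Reeb trajectory through $x_0$ meets $\psi_H^1(R)$ and $R$ only in discrete time sets, and continuity of $\tau\mapsto a(\tau,i)$ then forces constancy; after that the harmonic equation with constant boundary data plus finite $\lambda$-energy gives $a(\tau,t)=Tt$. Second, the gloss ``$\tau$-independence of $a$ is precisely $Q=0$'' conflates the conclusion with the definition of the asymptotic charge, which is the coefficient of $d\tau$ in $\overline u_\infty^*\lambda\circ j$ near infinity; you should compute $\overline u_\infty^*\lambda\circ j = da\circ j = T\,dt\circ j$ and read off that its $d\tau$-component on the strip is constant equal to $T\cdot(\text{a conventional factor})$ with no $\tau$-drift, hence $Q=0$, rather than identifying $Q=0$ with $\tau$-independence outright. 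Also note the paper's gauge transformation in this section is defined through $t\mapsto\psi_{\widetilde H}^t$ rather than $t\mapsto\psi_H^t$ (see the remark following Lemma~\ref{lem:Ham-Reeb}), although at the level of detail you work at this does not affect the structure of the argument.
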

We note that at $t=1$, $u_\infty(\tau,1) = \psi_H^1\phi_{R_\lambda^{T}}(x_0)$. Therefore
 then the curve $\gamma$ defined by
\be\label{eq:gamma-x0}
\gamma(t) = \psi_H^t \phi_{R_\lambda^{tT}}(x_0)
\ee
satisfies $\gamma(0) \in R_1,\, \, \gamma(1) \in R_0$ and appears as the asymptotic limit of
a finite energy solution $u$ of \eqref{eq:perturbed-contacton-bdy}.

Motivated by this subsequence result, we introduce the following translated version of Hamiltonian chords.

\begin{defn}[Translated Hamiltonian chords] \label{defn:trans-Ham-chords}
Let $(R_0,R_1)$ be a 2-component Legendrian link of $(M,\lambda)$.
\begin{enumerate}
\item
We call a curve $\gamma$ of the form \eqref{eq:gamma-x0} a \emph{translated Hamiltonian chord} from $R_0$ to $R_1$ with $x_0 \in R_0$.
We denote by
$$
\mathfrak X^{\text{\rm trn}}((R_0,R_1);H)
$$
the set thereof.
\item
We call the intersection $(\psi_H^1)^{-1}(Z_{R_0})\cap R_1$ the set of \emph{$\lambda$-translated Hamiltonian intersection points}.
\end{enumerate}
\end{defn}

The following obvious lemma shows the relationship between the set of translated Hamiltonian intersection points
of $(\psi_H^1)^{-1}(Z_{R_1})$ and $R_0$
and the intersection $\psi_H^1(R_1) \cap Z_{R_0}$. 

\begin{lem}\label{prop:trnhamchords-trnReebchords}
Let $R_1 = \psi_H^1(R_0)$. Then we have one-one correspondences
$$
\xymatrix{
(\psi_H^1)^{-1}(Z_{R_1})\cap R_0 \ar[d]\ar[r] & \ar[l]\psi_H^1(R_0) \cap Z_{R_1} \ar[d]\\
\mathfrak X^{\text{\rm trn}}((R_0,R_1);H) \ar[u] \ar[r] & \ar[l] \mathfrak{Reeb}(R_0, R_1) \ar[u].
}
$$
\end{lem}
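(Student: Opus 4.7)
The proof is purely formal: each of the four arrows in the diagram is an explicit bijection between finite-type geometric data, and there are no analytic ingredients required. My plan is to construct each map together with its inverse and then verify that the square commutes.

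First, for the top horizontal arrow, the diffeomorphism $\psi_H^1$ restricts to a bijection from $R_0$ onto $\psi_H^1(R_0)$, and this restriction carries the subset $(\psi_H^1)^{-1}(Z_{R_1}) \cap R_0$ onto $\psi_H^1(R_0) \cap Z_{R_1}$; the inverse is $(\psi_H^1)^{-1}$. This uses only that $\psi_H^1$ is a diffeomorphism, independently of the hypothesis $R_1 = \psi_H^1(R_0)$.

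For the left vertical arrow, I would associate to each $\lambda$-translated Hamiltonian intersection point $x_0 \in R_0$, which by definition satisfies $\psi_H^1(x_0) = \phi_{R_\lambda}^T(y)$ for some $y \in R_1$ and some $T \in \R$, the translated Hamiltonian chord $\gamma(t) = \psi_H^t \phi_{R_\lambda}^{tT}(x_0)$ from \eqref{eq:gamma-x0}. Conversely, any such $\gamma$ is determined by the pair $(x_0, T)$ via the initial point and the Reeb period, which recovers the translated intersection datum. The right vertical arrow is the analogous construction after applying $\psi_H^1$, which makes sense precisely because $R_1 = \psi_H^1(R_0)$.

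For the bottom horizontal arrow, I would invoke the hypothesis $R_1 = \psi_H^1(R_0)$: a Reeb chord in $\mathfrak{Reeb}(R_0, R_1)$ is a pair $(p, T)$ with $p \in R_0$ and $\phi_{R_\lambda}^T(p) \in R_1 = \psi_H^1(R_0)$, and the map $(p,T) \mapsto \gamma(t) := \psi_H^t \phi_{R_\lambda}^{tT}(p)$ produces precisely the translated Hamiltonian chord data, with inverse given by reading off $(\gamma(0), T)$. Commutativity of the square is then a direct comparison of these formulas on each arrow. The one small point of care is to fix the sign convention for the Reeb time $T$ consistently across all four corners so that the maps agree; since the author labels the lemma \emph{obvious}, I expect no substantive obstacle beyond this bookkeeping.
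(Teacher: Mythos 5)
The paper itself offers no proof of this lemma: it is introduced as ``the following obvious lemma'' and stated without argument, so there is no paper proof to compare against. What you have written is essentially the unwinding the author considers routine, and the overall structure you set up -- top arrow via $\psi_H^1$ as a diffeomorphism, vertical arrows assigning to an intersection datum the associated chord $\gamma(t)=\psi_H^t\phi_{R_\lambda}^{tT}(\cdot)$, bottom arrow as the gauge transformation between a Reeb chord $\mu(t)=\phi_{R_\lambda}^{tT}(p)$ and the perturbed chord $\gamma(t)=\psi_H^t\mu(t)$ -- is the correct and natural one.

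Two bookkeeping issues deserve a sharper treatment than you give them, although both originate in how loosely the lemma is stated rather than in your reasoning. First, the intersection sets $(\psi_H^1)^{-1}(Z_{R_1})\cap R_0$ and $\psi_H^1(R_0)\cap Z_{R_1}$ are sets of \emph{points}, whereas $\mathfrak{Reeb}(R_0,R_1)$ and $\mathfrak X^{\text{\rm trn}}((R_0,R_1);H)$ are sets of chords carrying a period $T$. Your vertical and bottom maps take as input a pair $(x_0,T)$, so for the diagram to be a genuine one--one correspondence one must either read the intersection sets as sets of such pairs, or impose a nondegeneracy hypothesis guaranteeing that the period is uniquely determined by the intersection point. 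You gesture at this (``reading off $(\gamma(0),T)$'') but should state it as a hypothesis rather than fold it into ``bookkeeping.'' Second, you should verify that your bottom arrow actually lands in $\mathfrak X^{\text{\rm trn}}((R_0,R_1);H)$ with the correct endpoint condition: given $(p,T)$ with $p\in R_0$ and $\phi_{R_\lambda}^T(p)\in R_1=\psi_H^1(R_0)$, the curve $\gamma(t)=\psi_H^t\phi_{R_\lambda}^{tT}(p)$ has $\gamma(1)=\psi_H^1\phi_{R_\lambda}^T(p)\in\psi_H^1(R_1)$, which is generally neither $R_0$ nor $R_1$. The resolution is that the translated-chord endpoint condition is inherited from the dynamical Legendrian boundary (both ends in $R_0$), and the corresponding Reeb chord therefore runs between $R_0$ and $(\psi_H^1)^{-1}(R_0)$; getting the labels $R_0,R_1$ and the direction of $\psi_H^1$ consistent across all four corners is exactly the ``sign convention for $T$'' point you flag but do not actually nail down. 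The conceptual content of your proof is right; tightening those two items would make it watertight.
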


\begin{rem}
The set $\mathfrak X^{\text{\rm trn}}((R_0,R_1);H)$ will play the role of generators of the
Floer homology associated to \eqref{eq:perturbed-contacton-bdy} and $\mathfrak{Reeb}(R_0, R_1)$
of the Floer homology of its gauge transform \eqref{eq:contacton-bdy-J0} below.
\end{rem}

\part{Contact product and its Hamiltonian geometry}
\label{part:contact-product}

In this part, we provide a functorial construction of \emph{product operation}
of contact manifolds and
its $\Z_2$-symmetry of anti-contact involution. We then develop the relevant
$\Z_2$-equivariant contact Hamiltonian calculus which will play a fundamental role in
our proof of Shelukhin's conjecture in combination with the analysis of contact
instantons.

\begin{rem}\label{rem:AA-versus-CA} The contact form $\mathscr A$ used in \cite{bhupal}
is closely related to
the symplectization $(Q \times \R, d(e^\eta \mathscr A))$ in that the natural map
$$
S(Q \times Q \times \R) \to S(Q \times SQ); \quad (x,y,\eta, s) \mapsto (x, (y,\eta),s)
$$
is a symplectomorphism if one equip the domain and the codomain with the following natural
symplectic forms
$$
\Omega_1 = d(e^s \mathscr A), \quad \left(\text{\rm or}\quad \Omega_2 = d(e^s(-\pi_1^*\lambda +  e^\eta \pi^*\lambda))\right)
$$
where $\pi: SQ \to Q$ is the canonical projection. However, as mentioned in the introduction,
because the contact manifold $(Q \times Q \times \R, \Xi)$ is not tame on the region $\{\eta < 0\}$,
the contact form $\mathscr A$ is not suitable as it is for our purpose. 
\end{rem}

In the present paper, we pursue for the argument of involutive symmetry in the contact geometry 
to overcome the aforementioned non-tameness of the region $\{\eta < 0\}$,
\emph{without taking the symplectization.}

\section{Contact product and Legendrianization}
\label{sec:contact-product}

In this section we introduce a functorial construction of \emph{product operation}
of contact manifolds $(Q_1,\xi_1)$ and $(Q_2,\xi_2)$ which carries a natural $\Z_2$-symmetry
of anti-contact involution when $(Q_1,\xi_1)= (Q_2,\xi_2)$.

\subsection{Definition of contact product}

Let $(M, \theta)$ be any complete Liouville (symplectic) manifold.

We start with the following well-known fact. (See \cite{cieliebak-eliashberg}, \cite{giroux}, for example.)
For readers' convenience, we include some discussion on
the ideal boundary by recalling  standard definitions and properties of the ideal
boundary of a Liouville manifold in Appendix \ref{sec:liouville}, to which we refer
readers for the unexplained terms in the statement.

\begin{prop} Let $(M, \theta)$ be any complete Liouville (symplectic) manifold.
Then the ideal boundary  $\del_\infty M$ consisting of (one-sided) Liouville rays carries a
canonical contact structure. When the Liouville flow admits a cross section,
then the restriction of $\theta$ to the cross section defines a contact structure
that is contactomorphic to $\del_\infty M$.
\end{prop}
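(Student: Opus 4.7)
The plan is to prove the two statements in sequence, using only the fact that on a Liouville manifold the Liouville vector field $Z$ is defined by $\iota_Z d\theta = \theta$ and generates a globally defined complete flow $\phi_Z^t$ satisfying $(\phi_Z^t)^*\theta = e^t \theta$ (equivalently $\CL_Z \theta = \theta$).

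First I would show that if $\Sigma \subset M$ is any hypersurface transverse to $Z$, then $\alpha := \theta|_\Sigma$ is a contact form. The key identity is the pointwise computation
$$
\iota_Z (d\theta)^n \;=\; n\,(\iota_Z d\theta) \wedge (d\theta)^{n-1} \;=\; n\,\theta \wedge (d\theta)^{n-1},
$$
where $\dim M = 2n$. Since $(d\theta)^n$ is a symplectic volume form and $Z$ is transverse to $\Sigma$, the pullback of $\iota_Z(d\theta)^n$ to $\Sigma$ is a volume form on $\Sigma$; so $\alpha\wedge(d\alpha)^{n-1}$ is nowhere vanishing, proving $\alpha$ is contact.

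Next I would establish the canonical contact structure on $\del_\infty M$. Using the appendix's definition, points of $\del_\infty M$ are equivalence classes of Liouville rays, i.e.\ orbits of $Z$ escaping to infinity. The relation $(\phi_Z^t)^*\theta = e^t\theta$ implies that $\phi_Z^t$ preserves the hyperplane distribution $\ker\theta$ and rescales $\theta$ only by a positive conformal factor. Hence, given any two hypersurfaces $\Sigma_0,\Sigma_1$ transverse to $Z$ and meeting each ray of a common open Liouville end, the composition of flows along $Z$ produces a diffeomorphism $\Sigma_0 \to \Sigma_1$ which pulls back $\theta|_{\Sigma_1}$ to a positive multiple of $\theta|_{\Sigma_0}$ and in particular is a contactomorphism of the kernels. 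I would define the contact structure on $\del_\infty M$ to be the one that turns the tautological map $\Sigma \to \del_\infty M$, sending $q$ to the ray $\phi_Z^{\mathbb R}(q)$, into a diffeomorphism of contact manifolds; the previous conformal invariance shows this is independent of the auxiliary choice of $\Sigma$, so the contact structure on $\del_\infty M$ is canonical.

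Finally, the second assertion is then immediate: if the Liouville flow admits a global cross section $\Sigma$, each Liouville ray meets $\Sigma$ in exactly one point, so the map $\Sigma \to \del_\infty M$ above is a global diffeomorphism, and by construction it identifies $\ker(\theta|_\Sigma)$ with the canonical contact distribution on $\del_\infty M$. The main subtlety, and the step I expect to require the most care, is verifying well-definedness on a general complete Liouville manifold whose Liouville flow need not admit a global cross section: one must either restrict attention to the open convex end (the complement of the skeleton, where every point lies on a ray going to $+\infty$) or work with a cofinal family of locally defined cross sections and check compatibility. Once this is set up, however, the contact structure on $\del_\infty M$ is manifestly independent of the chosen $\Sigma$ because of the conformal equivariance of $\theta$ under $\phi_Z^t$.
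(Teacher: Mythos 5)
The paper does not actually prove this proposition: it is stated as a well-known fact with references to Cieliebak--Eliashberg and Giroux, and Appendix~\ref{sec:liouville} only records the definitions of Liouville rays, ideal boundary, and cross section. So there is no in-paper proof to compare against. That said, your reconstruction is correct and is precisely the standard argument those references use. The pointwise identity $\iota_Z(d\theta)^n = n\,\theta\wedge(d\theta)^{n-1}$ together with transversality of $Z$ to $\Sigma$ shows $\theta|_\Sigma$ is contact; the conformal equivariance $(\phi_Z^t)^*\theta = e^t\theta$ shows the flow preserves $\ker\theta$ and hence that the induced contact structure on the space of rays is independent of the auxiliary transversal; and the global cross section case is then a diffeomorphism onto $\del_\infty M$ by construction. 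You also correctly flag the one genuine subtlety: for a complete Liouville manifold without a global cross section one must work either on the complement of the skeleton (where every point lies on an escaping ray) or with a cofinal family of local transversals, patching via the flow; the paper's appendix sidesteps this by defining $\del_\infty M$ directly as equivalence classes of rays, and the decomposition $\del_\infty M = \del_{+\infty}M \sqcup \del_{-\infty}M$ there is worth keeping in mind since the two ends are handled by the forward and backward half-flows respectively. Your proof is complete and consistent with the definitions in the appendix.
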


Next we recall that  the canonical symplectization of a contact manifold $(Q,\xi)$
is defined by
$$
S^{\text{\rm can}}Q = \{\mathfrak{B} \in T^*Q \setminus \{0\} \mid \mathfrak{B}|_\xi = 0 \}
$$
equipped with its symplectic form
$$
\omega_0 = \sum_{i=1} dq_i \wedge dp_i = - d\theta|_{SQ}
$$
where $\theta$ is the Liouville one-form on $T^*Q$.
(See e.g. \cite[Appendix]{arnold-book} for the definition.)

When $\xi$ is cooriented, $S^{\text{\rm can}}Q$ has two connected components, denoted by
$S_+Q$ and $S_-Q$. Each of them is symplectomorphic to $Q \times \R_+$
equipped with the symplectic form $d(r \lambda)$ provided a contact form
$\lambda$ is chosen. Then $(Q,\xi)$ is naturally contactomorphic to
$(SQ, \ker \theta|_{SQ})/\sim$. When $(Q,\xi)$ is cooriented, we just denote
$S_+ Q = Q \times \R_+$ equipped with $[\pi^*\theta]$.

Now let $(Q_1, \xi_1)$ and $(Q_2,\xi_2)$ be two cooriented contact manifolds.
We consider the product symplectic manifold
$$
(S^{\text{\rm can}}Q_1 \times S^{\text{\rm can}}Q_2, \pi_1^*\omega_0^1 + \pi_2^*\omega_0^2).
$$
Then we have
$$
\pi_1^*\omega_0^1 + \pi_2^*\omega_0^2 = d(-\pi_1^*\theta_1 - \pi_2^*\theta_2).
$$
\begin{lem} The Liouville one-form $-\pi_1^*\theta_1 - \pi_2^*\theta_2$ of the exact symplectic form
$\pi_1^*\omega_0^1 + \pi_2^*\omega_0^2$ has
the Liouville vector field given by
$$
(X_1, X_2)
$$
where $X_i$ is the Liouville vector field  of $S^{\text{\rm can}}Q_i$, $i = 1,\, 2$ which
are the standard Liouville vector fields $p \frac{\del}{\del p}$
of $T^*Q_i$ restricted to $S^{\text{\rm can}}Q_i$.
\end{lem}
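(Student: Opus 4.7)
The plan is to verify the defining property of a Liouville vector field, namely that for the Liouville one-form $\Lambda := -\pi_1^*\theta_1 - \pi_2^*\theta_2$ and its symplectic form $\Omega := d\Lambda = \pi_1^*\omega_0^1 + \pi_2^*\omega_0^2$, the candidate $X := (X_1,X_2)$ satisfies $\iota_X \Omega = \Lambda$ (equivalently $\CL_X \Omega = \Omega$). The computation splits along the two product factors, so the whole argument reduces to the corresponding single-factor identity on each $S^{\text{\rm can}}Q_i$.

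First, I would recall that on each factor the convention $\omega_0^i = -d\theta_i|_{SQ_i}$ combined with the local description $\theta_i = p\,dq$, $X_i = p\,\tfrac{\del}{\del p}$ gives the single-factor relation $\iota_{X_i}\omega_0^i = -\theta_i$; this is a two-line Cartan calculus computation (using $\CL_{X_i}\theta_i = \theta_i$ together with $\iota_{X_i}\theta_i = 0$, or directly from $\iota_{p\del_p}(dq\wedge dp) = -p\,dq$). Second, because the projection $\pi_i \colon S^{\text{\rm can}}Q_1 \times S^{\text{\rm can}}Q_2 \to S^{\text{\rm can}}Q_i$ is a submersion and $X = (X_1,X_2)$ is $\pi_i$-related to $X_i$, one has the elementary identity
\[
\iota_{(X_1,X_2)}\pi_i^*\omega_0^i \;=\; \pi_i^*\bigl(\iota_{X_i}\omega_0^i\bigr) \;=\; -\pi_i^*\theta_i.
\]
Summing these two equations yields $\iota_X \Omega = -\pi_1^*\theta_1 - \pi_2^*\theta_2 = \Lambda$, which is exactly what is required to identify $(X_1,X_2)$ as the Liouville vector field associated with $\Lambda$.

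Finally, I would remark that there is no genuine obstacle to overcome: the only subtle point is the sign bookkeeping arising from the convention $\omega_0^i = -d\theta_i$ (rather than $+d\theta_i$), and that once this sign is treated consistently the identity $\iota_{X_i}\omega_0^i = -\theta_i$ matches exactly the sign of the Liouville form $\Lambda = -\pi_1^*\theta_1 - \pi_2^*\theta_2$. Completeness of $(X_1,X_2)$ as a Liouville flow follows from the completeness of each $X_i$ assumed in the hypothesis on $(M,\theta)$ applied to each factor, so the product remains a complete Liouville manifold, which will be used implicitly when forming its ideal boundary in the contact product construction that follows.
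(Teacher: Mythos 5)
Your argument is correct, and since the paper states this lemma without any proof (regarding it as a routine Cartan-calculus check), there is nothing in the text to compare against. The sign bookkeeping is handled consistently: with $\omega_0^i = -d\theta_i$ one indeed gets $\iota_{X_i}\omega_0^i = -\theta_i$, the $\pi_i$-relatedness of $(X_1,X_2)$ and $X_i$ gives $\iota_{(X_1,X_2)}\pi_i^*\omega_0^i = -\pi_i^*\theta_i$, and summing reproduces $\Lambda = -\pi_1^*\theta_1 - \pi_2^*\theta_2$, which is exactly the Liouville condition.
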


\begin{defn}[Contact product] For each given pair of contact manifolds $(Q_i,\xi_i)$, $i = 1, \, 2$,
we define their \emph{contact product}, denoted by $Q_1 \star Q_2$, to be
$$
Q_1 \star Q_2 = S^{\text{\rm can}}Q_1 \times S^{\text{\rm can}}Q_2/\sim
$$
equipped with the aforementioned canonical contact structure defined by
$$
\xi_{Q_1 \star Q_2} = [\ker (-\pi_1^*\theta_1 - \pi_2^*\theta_2)] \subset T(Q_1 \star Q_2)
= T(S^{\text{\rm can}}Q_1 \times S^{\text{\rm can}}Q_2)/\sim.
$$
\end{defn}

\begin{rem} The functorial study of contact products 
of any, not necessarily coorientable, and the calculus of their Legendrian submanifolds
will be systematically further developed in \cite{oh-park} with Junhyuk Park.
\end{rem}

\subsection{Contact product $Q \star Q$ and anti-contact involution}

We now specialize to the case $Q_1 = Q_2 = Q$.
Note that $M_Q$ carries a natural involution
\be\label{eq:involution}
\iota(x,y,\eta) = (y,x,-\eta)
\ee
such that
$$
\Gamma_{\text{\rm id}} = \Fix \iota.
$$
Existence of such an involution is special for $R = \Gamma_{\text{\rm id}}$
which will enable us to prove Main Theorem, Theorem \ref{thm:translated-intro},
by the symmetry argument that is similar to the one exploited by Fukaya-Oh-Ohta-Ono in
\cite{fooo:anti-symplectic}.

We say the involution $\iota$ \emph{anti-contact} in the following sense,
whose proof is omitted.

\begin{lem} Let $\Xi$ be the contact structure on $M_Q$ given by
$$
\Xi: = \ker {\mathscr A}.
$$
Then $\iota^*\Xi = \Xi$. Furthermore the contact form 
$[- \pi_1^*\theta + \pi_2^\theta]=: \Lambda$ also satisfies $\iota^\Lambda = \Lambda$
\end{lem}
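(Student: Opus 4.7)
The plan is to verify both assertions by direct pullback computations, using the explicit formula $\mathscr{A}=-e^\eta\pi_1^*\lambda+\pi_2^*\lambda$ and the involution $\iota(x,y,\eta)=(y,x,-\eta)$. The key identities I will exploit are $\pi_1\circ\iota=\pi_2$, $\pi_2\circ\iota=\pi_1$, and $\iota^*\eta=-\eta$, so that $\iota^*(\pi_1^*\lambda)=\pi_2^*\lambda$, $\iota^*(\pi_2^*\lambda)=\pi_1^*\lambda$, and $\iota^*(e^{c\eta})=e^{-c\eta}$ for any constant $c$.

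First I would compute
\[
\iota^*\mathscr{A}=-e^{-\eta}\pi_2^*\lambda+\pi_1^*\lambda=-e^{-\eta}\bigl(-e^{\eta}\pi_1^*\lambda+\pi_2^*\lambda\bigr)\cdot(-1)\cdot(-1),
\]
and after factoring out $e^{-\eta}$ obtain $\iota^*\mathscr{A}=-e^{-\eta}\mathscr{A}$. Since the factor $-e^{-\eta}$ is nowhere vanishing, taking kernels yields $\iota^*\Xi=\ker(\iota^*\mathscr{A})=\ker\mathscr{A}=\Xi$, which is the first assertion. As a byproduct, applying the same computation to $\CA=e^{-\eta/2}\mathscr{A}$ and using $\iota^*e^{-\eta/2}=e^{\eta/2}$ gives $\iota^*\CA=e^{\eta/2}\cdot(-e^{-\eta})\mathscr{A}=-e^{-\eta/2}\mathscr{A}=-\CA$, which records the anti-symmetry that was asserted informally in the introduction.

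For the second assertion I would pass to the functorial model $Q\star Q=S^{\mathrm{can}}Q\times S^{\mathrm{can}}Q/\!\sim$ and lift $\iota$ to the map $\widetilde\iota(\beta_1,\beta_2)=(-\beta_2,-\beta_1)$ on the double symplectization. Because the Liouville one-form $\theta$ on $T^*Q$ is homogeneous of degree one in the fiber, the fiberwise antipodal map satisfies $(-\mathrm{id})^*\theta=-\theta$, while the factor swap exchanges $\pi_1$ and $\pi_2$. Combining these two effects,
\[
\widetilde\iota^*(-\pi_1^*\theta+\pi_2^*\theta)=-(-\pi_2^*\theta)+(-\pi_1^*\theta)=-\pi_1^*\theta+\pi_2^*\theta,
\]
so $\widetilde\iota^*\Lambda=\Lambda$. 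To finish I would only need to check that $\widetilde\iota$ commutes with the diagonal Liouville flow (so it descends to the quotient $Q\star Q$) and that under the identification $Q\star Q\cong M_Q$ determined by the chosen contact form $\lambda$ the descended map coincides with $\iota(x,y,\eta)=(y,x,-\eta)$; both are straightforward from the definition of the quotient.

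The only point of the argument that requires any care is the compatibility between the two models used for the contact product: the extrinsic presentation $M_Q=Q\times Q\times\R$ with form $\mathscr{A}$ (or $\CA$), and the intrinsic symplectic-quotient presentation $Q\star Q$ with form $\Lambda$. Once the identifying contactomorphism is written out in fiber coordinates, the sign bookkeeping in the two computations above is purely mechanical, and I do not anticipate any obstacle beyond this translation between conventions.
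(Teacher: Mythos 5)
Your argument is correct. The paper states this lemma with ``whose proof is omitted,'' so there is no proof in the paper to compare against; what you supply is the obvious direct computation, and all the pieces check out: $\iota^*\mathscr{A}=\pi_1^*\lambda-e^{-\eta}\pi_2^*\lambda=-e^{-\eta}\mathscr{A}$ gives $\iota^*\Xi=\Xi$, the byproduct $\iota^*\mathcal{A}=-\mathcal{A}$ is exactly Lemma~\ref{lem:anti-contact}, and for the $\Lambda$-assertion your lift $\widetilde\iota(\beta_1,\beta_2)=(-\beta_2,-\beta_1)$ is indeed the one that commutes with the diagonal Liouville flow and intertwines with $\iota$ through $i_{1/2}$, since $\widetilde\iota(-e^{\eta/2}\lambda_x,e^{-\eta/2}\lambda_y)=(-e^{-\eta/2}\lambda_y,e^{\eta/2}\lambda_x)=i_{1/2}(y,x,-\eta)$. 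One purely cosmetic remark: the trailing $(-1)\cdot(-1)$ in your first display is numerically inert and rather confusing; it is cleaner to write $\iota^*\mathscr{A}=\pi_1^*\lambda-e^{-\eta}\pi_2^*\lambda=-e^{-\eta}\bigl(-e^{\eta}\pi_1^*\lambda+\pi_2^*\lambda\bigr)=-e^{-\eta}\mathscr{A}$ directly.
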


 The following is another important 
property of the contact product $Q \star Q$.
\begin{lem} Assume that $(Q,\xi)$ is cooriented equipped with a contact form
$\lambda$. Then the canonical projection 
$$
Q \star Q = S^{\text{\rm can}} Q \times S^{\text{\rm can}} Q/\sim  \, \to Q \times Q
$$
induces a natural section.
\end{lem}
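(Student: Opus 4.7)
The plan is to build the section by using the tautological ``$\lambda$-section'' of the canonical symplectization.

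First, I need to verify that the projection $\pi : Q \star Q \to Q \times Q$ is actually well defined. The canonical Liouville vector field on $S^{\text{\rm can}}Q \subset T^*Q \setminus \{0\}$ is the radial vector field $p\,\partial/\partial p$ along cotangent fibers, which preserves the bundle projection $\rho : S^{\text{\rm can}}Q \to Q$. Consequently the product Liouville field $(X_1,X_2)$ on $S^{\text{\rm can}}Q \times S^{\text{\rm can}}Q$ preserves $\rho \times \rho$, so the latter descends to a well-defined map
\[
\pi : Q \star Q = S^{\text{\rm can}}Q \times S^{\text{\rm can}}Q / \sim \; \longrightarrow \; Q \times Q
\]
on the quotient by the Liouville flow.

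Next, I use the contact form $\lambda$ to exhibit a section of $\rho$. By the definition of contact form, $\lambda$ is a nowhere-vanishing 1-form satisfying $\xi = \ker\lambda$, so at each $q \in Q$ the covector $\lambda_q$ lies in $S^{\text{\rm can}}Q$. This yields a smooth embedding
\[
\sigma_\lambda : Q \hookrightarrow S^{\text{\rm can}}Q, \qquad \sigma_\lambda(q) := \lambda_q,
\]
with $\rho \circ \sigma_\lambda = \mathrm{id}_Q$. I then define the desired section by
\[
s_\lambda : Q \times Q \longrightarrow Q \star Q, \qquad s_\lambda(q_1,q_2) := \bigl[(\lambda_{q_1},\lambda_{q_2})\bigr].
\]
Since the equivalence class on the right-hand side projects to $(q_1,q_2)$ under $\pi$, one has $\pi \circ s_\lambda = \mathrm{id}_{Q \times Q}$ by construction, and $s_\lambda$ is manifestly natural in $\lambda$.

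No real obstacle arises in this argument; the only bookkeeping worth mentioning is the identification with the coordinate model $M_Q = Q \times Q \times \R$ used in the rest of the paper. Trivializing the relevant connected component of $S^{\text{\rm can}}Q$ via $\lambda$, the equivalence $(q_1,r_1,q_2,r_2) \sim (q_1,e^s r_1, q_2, e^s r_2)$ leaves exactly one free parameter in each fiber of $\pi$, which can be taken to be $\eta \in \R$ with the representatives $r_i = \varepsilon_i e^{\pm \eta / 2}$ (the signs $\varepsilon_i$ being chosen from the coorientation components so as to recover the contact form $\CA = -e^{\eta/2}\pi_1^*\lambda + e^{-\eta/2}\pi_2^*\lambda$). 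In these coordinates the image of $s_\lambda$ is the slice $\{\eta = 0\}$, which is consistent with the fact that $\Gamma_{\text{\rm id}} \subset M_Q$, the fixed locus of the anti-contact involution $\iota(x,y,\eta)=(y,x,-\eta)$, meets this slice in the diagonal of $Q \times Q$.
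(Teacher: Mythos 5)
Your argument is correct and follows the same route as the paper: each factor of $S^{\text{\rm can}}Q$ is trivialized fiberwise via $\lambda$, yielding a section of the quotient by the Liouville flow. The only divergence is a sign convention --- the paper takes $s(x,y):=(-\lambda_x,\lambda_y) \in T_x^*Q \oplus T_y^*Q$ so that the section lands in the slice $\{\eta=0\}$ of the embedding $i_{1/2}(x,y,\eta)=[(-e^{\eta/2}\lambda_x,\,e^{-\eta/2}\lambda_y)]$ matching the contact form $\CA = -e^{\eta/2}\pi_1^*\lambda + e^{-\eta/2}\pi_2^*\lambda$; your closing remark about the choices $\varepsilon_i$ anticipates this, but it is cleaner to put the minus sign into $s_\lambda$ itself rather than deferring it to later bookkeeping.
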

\begin{proof} At each point $(x,y) \in Q \times Q$, consider the element $T_x Q \oplus T_yQ$
defined by
$$
s(x,y): = (- \lambda(x), \lambda(y)) \in T_x^*Q \oplus T_y^*Q
$$
that is transverse to the Liouville flow and so
induces a natural section of  the projection 
 to $Q \star Q = S^{\text{\rm can}}Q \times S^{\text{\rm can}}Q/\sim \to Q \times Q$.
\end{proof}
Therefore the $\R_+$-principal bundle $Q \star Q \to
Q \times Q$ is trivial and so isomorphic to the trivial  $\R_+$ bundle $Q \times Q \times \R_+$.

To do the calculations on the contact  product $Q \times Q \times \R$
exploiting this involutive symmetry of $\Xi$ , we also look for the associated 
contact form on the product that respects this involutive symmetry of $\Xi$.

To provide a wider perspective, we first consider the following 1-parameter family of contact forms of $\Xi$.
\begin{prop} Assume further that $(Q,\xi)$ is cooriented.
Let $\lambda$ be a contact form of $(Q,\xi)$.
For each $\kappa \in [0,1]$, we consider the map
$$
i_\kappa : Q \times Q \times \R \to  S^{\text{\rm can}}Q \times S^{\text{\rm can}}Q /\sim
$$
defined by
\be\label{eq:is}
i_\kappa(x,y,\eta) = [(-e^{(1-\kappa) \eta}\lambda(x), e^{-\kappa \eta} \lambda(y))].
\ee
Then the pull-back contact form 
$\Lambda_\kappa: =i_\kappa^*(\pi_1^*\theta + \pi_2^*\theta)$  
of the diffeomorphisms can be expressed as
$$
\Lambda_\kappa = - e^{(1-\kappa)\eta} \pi_1^*\lambda + e^{-\kappa\eta} \pi_2^*\lambda
$$
for $\kappa \in [0,1]$.
\end{prop}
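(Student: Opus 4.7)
The proof is a direct application of the tautological property of the Liouville one-form on $T^*Q$, together with a quick verification that each $i_\kappa$ is a diffeomorphism. First, I would recall the tautological characterization of $\theta$: for any smooth map $\Phi: N \to T^*Q$, writing $\Phi(n) = (q(n), p(n))$ with base component $q = \pi_Q \circ \Phi: N \to Q$ and fibre component $n \mapsto p(n) \in T^*_{q(n)}Q$, one has
\[
(\Phi^*\theta)_n(w) = p(n)\bigl(dq_n(w)\bigr) \quad \text{for all } w \in T_nN.
\]
In particular, whenever $p(n) = \alpha(n)\lambda_{q(n)}$ for a smooth function $\alpha \in C^\infty(N)$ and a 1-form $\lambda$ on $Q$, this specializes to the pointwise identity $\Phi^*\theta = \alpha \cdot q^*\lambda$.

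Next I would apply this formula to each of the two components of $i_\kappa$. Write $\pi_1, \pi_2: Q \times Q \times \R \to Q$ for the first and second projections, and temporarily denote by $\mathrm{pr}_1, \mathrm{pr}_2: S^{\text{\rm can}}Q \times S^{\text{\rm can}}Q \to S^{\text{\rm can}}Q$ the two projections on the target. The composition $\mathrm{pr}_1 \circ i_\kappa$ sends $(x,y,\eta)$ to the covector $-e^{(1-\kappa)\eta}\lambda_x \in T^*_x Q$, with base map $\pi_1$ and fibre weight $\alpha_1(x,y,\eta) = -e^{(1-\kappa)\eta}$. The tautological formula therefore gives
\[
i_\kappa^*\bigl(\mathrm{pr}_1^*\theta\bigr) = -e^{(1-\kappa)\eta}\,\pi_1^*\lambda,
\]
and an identical calculation for $\mathrm{pr}_2 \circ i_\kappa$ yields $i_\kappa^*(\mathrm{pr}_2^*\theta) = e^{-\kappa\eta}\,\pi_2^*\lambda$. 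Summing the two identities produces the claimed expression for $\Lambda_\kappa$.

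It remains to justify that each $i_\kappa$ is in fact a diffeomorphism onto the appropriate component of $S^{\text{\rm can}}Q \times S^{\text{\rm can}}Q/\!\sim$. Since the equivalence relation is induced by the diagonal Liouville action $(p_1, p_2) \mapsto (e^s p_1, e^s p_2)$, two representatives $(-e^{a_i}\lambda_{x_i}, e^{b_i}\lambda_{y_i})$ for $i = 1,2$ determine the same class if and only if $x_1 = x_2$, $y_1 = y_2$, and $a_1 - a_2 = b_1 - b_2$. Thus each class in the image is uniquely parametrized by the triple $(x, y, a - b) \in Q \times Q \times \R$; with $a = (1-\kappa)\eta$ and $b = -\kappa\eta$ one has $a - b = \eta$ for every $\kappa \in [0,1]$, so $i_\kappa$ is a smooth bijection with smooth inverse. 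Because the pullback of a contact form along a diffeomorphism is a contact form, the formula for $\Lambda_\kappa$ is verified. The only potential obstacle is purely notational — namely the need to carefully distinguish the projections $\pi_i$ on $Q \times Q \times \R$ from the projections $\mathrm{pr}_i$ on $S^{\text{\rm can}}Q \times S^{\text{\rm can}}Q$ that appear before pullback — and the computation itself reduces to a one-line application of the tautological property of $\theta$.
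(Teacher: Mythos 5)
Your proof is correct and follows essentially the same route as the paper: the paper likewise treats $i_\kappa$ as a cross-section of the diagonal Liouville flow on $S^{\text{\rm can}}Q\times S^{\text{\rm can}}Q$ (hence a diffeomorphism onto $Q\star Q$), and then asserts the pullback formula without spelling it out. You simply make explicit the componentwise application of the tautological property of the Liouville one-form, which is precisely the computation the paper leaves to the reader.
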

\begin{proof} Recall the Liouville flow, i.e., the flow of the ODE $\dot {\bf x} = X_\theta({\bf x})$
with ${\bf x} = (x,p_x)$ on $T^*M$ is given by
$$
\psi: (t,(x,p_x)) \mapsto (x, e^t p_x)
$$
for general manifold $M$.
Consider the embedding
$$
i_\kappa: Q \times Q \times \R \to S^{\text{\rm can}}Q \times S^{\text{\rm can}}Q
$$
defined by
$$
i_\kappa (x,y,\eta):= \left(- e^{(1-\kappa) \eta} \lambda_x, e^{- \kappa \eta} \lambda_y\right).
$$
It provides a cross section for the flow $\Psi$ and so induces a diffeomorphism to $Q \star Q$,
which we still denote by the same symbol $i_\kappa$.
Furthermore we have the pull-back formula
\be\label{eq:Lambdas}
i_\kappa^*(\pi_1^*\theta + i_2^*\theta)
= - e^{(1-\kappa) \eta} \pi_1^*\lambda + e^{-\kappa \eta} \pi_2^*\lambda = \Lambda_\kappa.
\ee
Since $\ker(\pi_1^*\theta + i_2^*\theta)$ defines the contact structure on $Q \star Q$,
its pull-back under the map $i$ becomes a contact form which is precisely $\Lambda_\kappa$.
\end{proof}

\begin{rem} The case with $\kappa = 1$ corresponds to the product structure utilized in 
\cite{bhupal} (with different sign convention) and \cite{oh:entanglement1}.
In the present paper, we will focus on the case of $\kappa = \frac12$.
\end{rem}

\section{Contact product and $\Z_2$-symmetry}
\label{sec:involutive-symmetry}

To exploit the aforementioned anti-contact involution in the study of \eqref{eq:K-intro},
we will consider a suitably defined family of CR almost complex structures $\widetilde J$ 
that is adapted to a suitably chosen contact form of the contact structure $\Xi$.

For this purpose, we consider the following contact form
\be\label{eq:contact-form-tildeA}
\CA : =\Lambda_{1/2} = -e^{\frac{\eta}{2}} \pi_1^*\lambda + e^{-\frac{\eta}{2}} \pi_2^*\lambda,
\ee
instead of ${\mathscr A}$, which corresponds to \eqref{eq:Lambdas} for $\kappa = \frac12$.
 The forms satisfy the relation $\CA = e^{-\frac{\eta}{2}} {{\mathscr A}}$.
An upshot of our current choice is the following anti-symmetry carried by
$\CA$.

\begin{lem}\label{lem:anti-contact} The contact form $\CA$ satisfies
$$
\iota^*\CA = -\CA.
$$
We say that the involution $\iota$ is an \emph{$\CA$ anti-contact involution}.
\end{lem}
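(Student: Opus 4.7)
The plan is to verify $\iota^*\CA = -\CA$ by direct computation, using only the definition $\iota(x,y,\eta) = (y,x,-\eta)$ and the explicit formula for $\CA = \Lambda_{1/2}$. First I would record how $\iota$ interacts with the building blocks of $\CA$: it swaps the first two factors, so $\pi_1\circ\iota = \pi_2$ and $\pi_2\circ\iota = \pi_1$, hence $\iota^*(\pi_1^*\lambda) = \pi_2^*\lambda$ and $\iota^*(\pi_2^*\lambda) = \pi_1^*\lambda$; and it negates the third coordinate, so $\iota^*(e^{\pm\eta/2}) = e^{\mp\eta/2}$. These are the only ingredients needed.

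Substituting into
$$
\CA = -e^{\eta/2}\pi_1^*\lambda + e^{-\eta/2}\pi_2^*\lambda
$$
gives
$$
\iota^*\CA = -e^{-\eta/2}\pi_2^*\lambda + e^{\eta/2}\pi_1^*\lambda = -\CA,
$$
which is the claim.

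There is no genuine obstacle here; the computation is a one-liner. The only conceptual point worth flagging is why the symmetric exponent $\kappa = \tfrac12$ is forced: in the family $\Lambda_\kappa = -e^{(1-\kappa)\eta}\pi_1^*\lambda + e^{-\kappa\eta}\pi_2^*\lambda$, the involution $\iota$ simultaneously swaps $\pi_1^*\lambda \leftrightarrow \pi_2^*\lambda$ and flips the sign of $\eta$ in the exponentials, so $\iota^*\Lambda_\kappa = -e^{-(1-\kappa)\eta}\pi_2^*\lambda + e^{\kappa\eta}\pi_1^*\lambda$. Comparison with $-\Lambda_\kappa$ forces $1-\kappa = \kappa$, i.e.\ $\kappa = \tfrac12$. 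This explains a posteriori why $\CA = \Lambda_{1/2}$ is the unique (up to scale) member of this family carrying the anti-contact symmetry that the rest of the paper exploits.
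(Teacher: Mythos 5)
Your computation is correct and is exactly the routine verification the paper has in mind; the paper states this lemma without proof (as it does for the similar observation $\iota^*\Xi = \Xi$ a few lines earlier, where it explicitly says the proof is omitted). Your closing remark about why $\kappa = \tfrac12$ is the unique value in the family $\Lambda_\kappa$ for which $\iota$ is anti-contact is a nice addition that the paper only alludes to informally.
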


\subsection{$\Z_2$ anti-invariant contact form and its Reeb vector field}

The following is also straightforward to check. We would like to highlight the
sign difference in the first two components arising from the sign difference in
the two summands of $\CA = - e^{\frac{\eta}{2}}\pi_1^*\lambda + e^{-\frac{\eta}{2}}\pi_2^*\lambda$.

\begin{lem} The Reeb vector field of $\CA$ is given by
\be\label{eq:Reeb-vectorfield}
R_{\CA} = \left(- \frac12 e^{-\frac{\eta}{2}}R_\lambda,  \frac12 e^{\frac{\eta}{2}}R_\lambda, 0\right).
\ee
In particular, the Reeb flow is given by
\be\label{eq:Reeb-flow}
\phi_{\CA}^t(x,y,\eta) = \left(\phi_\lambda^{-\frac{t}{2}e^{-\frac{\eta}{2}}}(x),
\phi_\lambda^{\frac{t}{2}e^{\frac{\eta}{2}}}(y), \eta\right).
\ee
\end{lem}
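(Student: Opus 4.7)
The plan is to use the defining characterization of the Reeb vector field: $R_{\CA}$ is uniquely determined by the two conditions $\CA(R_{\CA}) = 1$ and $\iota_{R_{\CA}} d\CA = 0$. The explicit split form of $\CA$ on $Q \times Q \times \R$ strongly suggests the ansatz
$$
R_{\CA}(x,y,\eta) = \alpha(\eta)\, R_\lambda(x) \oplus \beta(\eta)\, R_\lambda(y) \oplus \gamma(\eta)\, \del_\eta,
$$
where I have anticipated that, by the fiberwise homogeneity of the formula for $\CA$, the coefficients should depend only on $\eta$. First I would compute
$$
d\CA = -\tfrac12 e^{\eta/2}\, d\eta \wedge \pi_1^*\lambda - e^{\eta/2}\, \pi_1^*d\lambda - \tfrac12 e^{-\eta/2}\, d\eta \wedge \pi_2^*\lambda + e^{-\eta/2}\, \pi_2^*d\lambda,
$$
and then contract with the ansatz.

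The key simplification is that $\iota_{R_\lambda} d\lambda = 0$, so the $\pi_i^*d\lambda$ terms drop out immediately upon contracting with the $R_\lambda$-components of the ansatz. What remains of $\iota_{R_{\CA}} d\CA$ is a linear combination of $d\eta$, $\pi_1^*\lambda$, $\pi_2^*\lambda$ with coefficients involving only $\alpha,\beta,\gamma$ and the exponentials. Setting this to zero against arbitrary test vectors separates the conditions: the vanishing of the $\pi_i^*\lambda$ coefficients forces $\gamma = 0$, and vanishing of the $d\eta$ coefficient forces $\alpha e^{\eta/2} + \beta e^{-\eta/2} = 0$. Combining the latter with the normalization $\CA(R_{\CA}) = -\alpha e^{\eta/2} + \beta e^{-\eta/2} = 1$ yields $\alpha = -\tfrac12 e^{-\eta/2}$, $\beta = \tfrac12 e^{\eta/2}$, which is precisely the claimed formula. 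A useful sanity check is that the answer is consistent with $\iota_* R_{\CA} = -R_{\CA}$, which is forced by the anti-symmetry $\iota^*\CA = -\CA$ from Lemma \ref{lem:anti-contact}; this is indeed manifest since $\iota(x,y,\eta) = (y,x,-\eta)$ swaps the first two slots and negates $\eta$.

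For the flow formula, the observation that $\gamma = 0$ is decisive: the $\eta$-coordinate is preserved along every trajectory, so the scalar factors $-\tfrac12 e^{-\eta/2}$ and $\tfrac12 e^{\eta/2}$ are constants along each flow line. Consequently the first and second coordinates each satisfy an autonomous Reeb ODE on $Q$ with constant rescaling, which integrates directly to the reparametrized Reeb flows $\phi_\lambda^{-\frac{t}{2}e^{-\eta/2}}$ and $\phi_\lambda^{\frac{t}{2}e^{\eta/2}}$. I do not anticipate any real obstacle; the whole argument is a direct computation, and the only potential pitfall is sign bookkeeping in the exponentials, which is already guided by the expected anti-symmetry under $\iota$.
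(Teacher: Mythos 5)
Your proof is correct and proceeds by essentially the same route as the paper: write down an ansatz, plug it into the defining conditions $R_{\CA}\rfloor\CA=1$ and $R_{\CA}\rfloor d\CA=0$, and solve; the paper's ansatz allows additional $\xi$-components $X^\pi,Y^\pi$ (which the computation then kills), whereas you suppress them from the start, which is harmless since uniqueness of the Reeb field validates any successful ansatz. The derivation of the flow formula from $\gamma=0$ and the sanity check via $\iota_*R_\CA=-R_\CA$ are both sound.
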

\begin{proof}  From the expression $\CA = -e^{{\frac{\eta}{2}}} \pi_1^*\lambda + e^{-{\frac{\eta}{2}}} \pi_2^*\lambda$, we compute
\be\label{eq:dA}
d\CA =  -e^{{\frac{\eta}{2}}} \pi_1^*d\lambda + e^{-{\frac{\eta}{2}}} \pi_2^*d\lambda + 
\frac12 \CB \wedge d\eta
\ee
where $\CB: = e^{{\frac{\eta}{2}}} \pi_1^*\lambda + e^{-{\frac{\eta}{2}}} \pi_2^*\lambda$.
We write $R_\CA$ as 
$$
R_\CA =  \left(X^\pi + a R_\lambda, Y^\pi + b R_\lambda, c \frac{\del}{\del \eta}\right)
$$
componentwise. Substituting it into the defining equation
$$
R_\CA \rfloor d\CA = 0, \, R_\CA \rfloor \CA = 1,
$$
a straightforward calculation using \eqref{eq:dA} yields the lemma.
\end{proof}

We would like to attract readers' attention to the fact that the Reeb vector field $R_{\CA}$
is tangent to the level set of the coordinate function $\eta: M_Q \to \R$.

We also have the following special property of the structure of
chord spectrum of the pair $(M_Q,\CA)$.
\begin{lem}[Compare with Lemma 2.3 \cite{oh:entanglement1}]\label{lem:Tlambda=TA}
Let $(M_Q,\CA)$ be as above. Then we have
$$
T(Q,\lambda) = T(M_Q,\CA;\Gamma_{\text{\rm id}}).
$$
\end{lem}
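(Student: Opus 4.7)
The plan is to use the explicit formula \eqref{eq:Reeb-flow} for the Reeb flow of $\CA$ to set up an explicit bijection between the set $\mathfrak{Reeb}(M_Q,\Gamma_\id;\CA)$ and the set $\mathfrak{Reeb}(Q,\lambda)$ of closed Reeb orbits of $\lambda$ on $Q$, preserving the action (length). The equality of period gaps then follows by taking infima.

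First I would parametrize an arbitrary Reeb chord. Any such chord $\gamma:[0,T]\to M_Q$ with $\gamma(0)=(q,q,0)\in \Gamma_\id$ has the form
\begin{equation*}
\gamma(t) = \phi_{\CA}^t(q,q,0) = \bigl(\phi_\lambda^{-t/2}(q),\ \phi_\lambda^{t/2}(q),\ 0\bigr),
\end{equation*}
by \eqref{eq:Reeb-flow}, since the $\eta$-coordinate is preserved by $R_\CA$ and starts at $0$. In particular $\gamma$ stays in the level set $\{\eta = 0\}$, so the endpoint lies on $\Gamma_\id$ if and only if $\phi_\lambda^{-T/2}(q) = \phi_\lambda^{T/2}(q)$, equivalently $\phi_\lambda^{T}(q) = q$. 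Thus $q$ lies on a closed Reeb orbit of $\lambda$ of period $T$.

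Second I would match the actions. Since $\CA(R_\CA) \equiv 1$, the $\CA$-action of the chord $\gamma$ above is $\int_\gamma \CA = T$. On the other hand, the $\lambda$-action of the associated closed Reeb orbit $t\mapsto \phi_\lambda^t(q)$ of period $T$ is also $\int_0^T \lambda(R_\lambda)\, dt = T$. Hence the correspondence
\begin{equation*}
\gamma \ \longleftrightarrow \ \bigl(t \mapsto \phi_\lambda^{-t/2}(q), \ t\in [0,T]\bigr)
\end{equation*}
is action-preserving in both directions. Conversely any closed Reeb orbit of $\lambda$ of period $T$ through $q$ gives rise, via the above formula, to a Reeb chord of $\Gamma_\id$ of action $T$, producing the inverse map.

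Third, I would conclude: since the bijection preserves action,
\begin{equation*}
\Spec(M_Q,\Gamma_\id;\CA) = \Spec(Q,\lambda),
\end{equation*}
so taking the infimum of the positive elements yields $T(M_Q,\CA;\Gamma_\id) = T(Q,\lambda)$. There is no serious obstacle; the only point requiring mild care is to verify that self Reeb chords of $\Gamma_\id$ cannot escape the slice $\{\eta = 0\}$, which is automatic from the third component of \eqref{eq:Reeb-vectorfield} being zero, hence there are no ``hidden'' chords starting and ending on $\Gamma_\id$ with non-zero excursion in $\eta$.
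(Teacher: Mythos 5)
Your proof is correct and follows essentially the same route as the paper: use the explicit Reeb flow formula \eqref{eq:Reeb-flow}, note that the $\eta$-component of $R_\CA$ vanishes so chords starting on $\Gamma_\id$ stay in $\{\eta = 0\}$, identify the boundary condition at $t=T$ with the closed-orbit condition $\phi_\lambda^T(q) = q$, and match actions. If anything your direct observation that the endpoint condition $\phi_\lambda^{-T/2}(q)=\phi_\lambda^{T/2}(q)$ is equivalent to $\phi_\lambda^T(q)=q$ is cleaner than the paper's explicit concatenation of the two half-arcs.
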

\begin{proof}
Let $\gamma$ be a closed Reeb orbit of period $T$ on $Q$. Then the associated
time-dependent Hamiltonian over $[0,1]$ is $H (t,{\bf x}) \equiv  -T$.
It follows that 
$$
\widetilde \gamma(t) = \left(\gamma\left(-\frac{tT}{2}\right), \gamma\left(\frac{(1-t)T}{2}\right),0\right)
$$
is a Reeb chord of $\Gamma_{\text{\rm id}}$ and if $\gamma$ is primary, so is
$\widetilde \gamma$.

Conversely suppose $\mu:[0, T'] \to M_Q$ is a $\CA$-Reeb chord of
$\Gamma_{\text{\rm id}}$. If we write $\mu(t) = (\gamma_1(t), \gamma_2(t), \eta(t))$, then
we have
$$
\begin{cases}
\dot \gamma_1(t) = - \frac12 e^{-\frac{\eta(t)}{2}} R_\lambda(\gamma_1(t)) \\
\dot \gamma_2(t) = \frac12 e^{\frac{\eta(t)}{2}} R_\lambda(\gamma_2(t))\\
\dot \eta(t) = 0
\end{cases}
$$
and the boundary condition
$$
\gamma_1(0) = \gamma_2(0), \quad \gamma_1(T') = \gamma_2(T')
$$
for some $T' > 0$.
Therefore if $\mu(0) \in \Gamma_{\text{\rm id}}$, then $\eta(t) \equiv 0$ and
$$
\dot \gamma_1(t) = - \frac12 R_\lambda(\gamma_1(t)), \quad
\dot \gamma_2(t) = \frac12 R_\lambda(\gamma_2(t)).
$$
Let $x_0 = \gamma_1(0) = \gamma_2(0)$. Then we have
$$
\gamma_1(t) = \phi_{R_\lambda}^{-t/2}(x_0), \quad \gamma_2(t)
=  \phi_{R_\lambda}^{t/2}(x_0).
$$
Therefore the concatenated curve $\gamma:[0,T'] \to Q$ defined by
$$
\gamma(t) = \begin{cases}
\gamma_2(t) \quad & t \in [0,T'/2] \\
\gamma_1(T' - t) \quad & t \in [T'/2,T']
\end{cases}
$$
is a $\lambda$-closed Reeb orbit of period $T'$. This proves the above correspondence
is one-to-one correspondence.

Next direct calculation also shows $\int_\gamma \lambda = \CA(\widetilde \gamma)$.
This finishes the proof by the definitions of $T(M,\lambda)$
and $T_{\CA}(\widetilde \gamma)$.
\end{proof}

Lemma \ref{lem:Tlambda=TA}.
implies the following which completes a conversion of the translated point problem
into that of translated intersection problem.
\begin{cor}\label{cor:link}
$$
e^{\text{\rm trn}}_{\CA}(\Gamma_{\text{\rm id}},\Gamma_{\text{\rm id}}) \geq
T(M_Q,\CA;\Gamma_{\text{\rm id}})
\Longrightarrow e^{\text{\rm trn}}_\lambda(Q,\xi) \geq T(Q,\lambda).
$$
\end{cor}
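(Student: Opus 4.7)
The plan is to combine three ingredients: (i) a canonical bijection between the $\lambda$-translated points of a contactomorphism $\psi$ of $Q$ and the intersection points $\Gamma_\psi \cap Z_{\Gamma_{\id}}$ inside $M_Q$; (ii) a lift $H \mapsto \H$ of Hamiltonians from $Q$ to $M_Q$ with $\psi_\H^1(\Gamma_{\id}) = \Gamma_{\psi_H^1}$ and $\|\H\|_\CA \leq \|H\|_\lambda$; and (iii) the equality $T(Q,\lambda) = T(M_Q,\CA;\Gamma_{\id})$ of Lemma~\ref{lem:Tlambda=TA}. Ingredient (iii) is in hand, so the work is in (i)--(ii).

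Step (i) I would verify by a direct computation using the Reeb flow formula \eqref{eq:Reeb-flow}. The trace of $\Gamma_{\id}$ along $R_\CA$ can be rewritten as
$$
Z_{\Gamma_{\id}} = \{(\phi_{R_\lambda}^{-s}(p),\, \phi_{R_\lambda}^{s}(p),\, 0) \mid p \in Q,\, s \in \R\}.
$$
A point $(\psi(q), q, g_\psi(q)) \in \Gamma_\psi$ then lies in $Z_{\Gamma_{\id}}$ if and only if $g_\psi(q) = 0$ and $\psi(q) = \phi_{R_\lambda}^{-2s}(q)$ for some $s \in \R$, which is exactly Sandon's condition. Hence $\Fix^{\text{\rm trn}}_\lambda(\psi) = \emptyset$ is equivalent to $\Gamma_\psi \cap Z_{\Gamma_{\id}} = \emptyset$.

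The main obstacle is step (ii). The naive attempts (acting only on the first factor, or acting by the symmetric product isotopy on $(x,y)$ while keeping $\eta$ fixed) fail either to preserve $\Gamma_{\id}$ set-wise or to have the right norm, and the two summands of $\CA = -e^{\eta/2}\pi_1^*\lambda + e^{-\eta/2}\pi_2^*\lambda$ carry opposite signs and opposite exponential twists, so bookkeeping of $-\CA(X_\H)$ is delicate. I would construct $\H$ by splitting the dynamics of $\psi_H^t$ symmetrically across the two factors of $M_Q$ in such a way that, along the diagonal locus $\{\eta = 0\} \supset \Gamma_{\id}$, the two contributions to $-\CA(X_\H)$ telescope to exactly $H$ while the pointwise oscillation is not amplified. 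This is the canonical lift $\H = \H'$ to be spelled out in Section~\ref{sec:lifted-Hamiltonian}, engineered precisely to realize requirements (1)--(2) of the introduction; the choice $\kappa = \tfrac{1}{2}$ in \eqref{eq:is} is used here so that the anti-contact involution $\iota$ identifies the two factors isometrically with respect to the norm bookkeeping.

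Granted (i) and (ii), the implication is immediate. Take any $H$ on $Q$ with $\Fix^{\text{\rm trn}}_\lambda(\psi_H^1) = \emptyset$. By (i) the set $\Gamma_{\psi_H^1} \cap Z_{\Gamma_{\id}} = \psi_\H^1(\Gamma_{\id}) \cap Z_{\Gamma_{\id}}$ is empty, so $\H$ is admissible in the infimum defining $e^{\text{\rm trn}}_\CA(\Gamma_{\id},\Gamma_{\id})$. Combining (ii), the hypothesis, and (iii) yields
$$
\|H\|_\lambda \;\geq\; \|\H\|_\CA \;\geq\; e^{\text{\rm trn}}_\CA(\Gamma_{\id},\Gamma_{\id}) \;\geq\; T(M_Q,\CA;\Gamma_{\id}) \;=\; T(Q,\lambda).
$$
Taking the infimum over all such $H$ gives $e^{\text{\rm trn}}_\lambda(Q,\xi) \geq T(Q,\lambda)$, as claimed.
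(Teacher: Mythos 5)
Your outline is the right one: the corollary is a definitional unwinding given (i) the bijection $\Fix^{\text{\rm trn}}_\lambda(\psi) \leftrightarrow \Gamma_\psi \cap Z_{\Gamma_{\id}}$, (ii) a norm-controlled lift $H \rightsquigarrow \H'$, and (iii) $T(Q,\lambda) = T(M_Q,\CA;\Gamma_{\id})$ from Lemma~\ref{lem:Tlambda=TA}. The paper's own ``proof'' is a single sentence, so your write-up essentially supplies what the paper leaves implicit. Your verification of (i) via the Reeb flow formula \eqref{eq:Reeb-flow} is correct and matches the paper's Proposition~\ref{prop:Fix-Reeb-correspondence}.

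Two points deserve repair. First, a direction issue: the paper's lift $\H'$ of \eqref{eq:HH'} satisfies $\psi_{\H'}^1(\Gamma_\psi) = \Gamma_{\id}$, i.e.\ $\psi_{\H'}^1$ maps $\Gamma_\psi$ \emph{to} $\Gamma_{\id}$, not the other way round; your step (ii) asserts $\psi_\H^1(\Gamma_{\id}) = \Gamma_{\psi_H^1}$. To make the admissibility argument for the infimum defining $e^{\text{\rm trn}}_\CA(\Gamma_{\id},\Gamma_{\id})$ go through, replace $\H'$ with its time-reversal $\widetilde{\H'}$ (whose $L^{(1,\infty)}$ norm is the same) so that $\psi_{\widetilde{\H'}}^1(\Gamma_{\id}) = \Gamma_\psi$, and then apply step (i) to conclude $\Gamma_\psi \cap Z_{\Gamma_{\id}} = \emptyset$.

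Second, you defer the norm estimate $\|\H'\|_\CA \leq \|H\|_\lambda$ to the later construction without checking it, and this is the one nontrivial ingredient of the corollary beyond Lemma~\ref{lem:Tlambda=TA}. In fact it holds with equality and is a two-line computation once $\H'$ is written out: since
$$
\H'(t,x,y) = \tfrac12 H\bigl(1-\tfrac{t}{2},x\bigr) + \tfrac12 H\bigl(\tfrac{t}{2},y\bigr)
$$
is a sum of two summands depending on independent variables, its pointwise oscillation on $M_Q$ is $\tfrac12\,\osc H_{1-t/2} + \tfrac12\,\osc H_{t/2}$, and
$$
\|\H'\| = \tfrac12\int_0^1 \osc H_{1-t/2}\,dt + \tfrac12\int_0^1 \osc H_{t/2}\,dt
= \int_{1/2}^1 \osc H_s\,ds + \int_0^{1/2} \osc H_s\,ds = \|H\|.
$$
With these two items filled in, the displayed chain of inequalities at the end of your proof is correct, and the corollary follows by taking the infimum over $H$ (and then over $\psi$ with $\Fix^{\text{\rm trn}}_\lambda(\psi)=\emptyset$).
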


\subsection{Hamiltonian vector field of the function $\eta$}

For this and later purpose, it is useful to introduce the notation
\be\label{eq:SA}
S_\CA := \left(\frac12 e^{-\frac{\eta}{2}} R_\lambda, \frac12 e^{\frac{\eta}{2}}R_\lambda, 0\right) \in \xi_\CA.
\ee
Then we obtain
\be\label{eq:=RS}
(0,  R_\lambda,0) =e^{- \frac{\eta}{2}} (R_\CA + S_\CA), \quad (R_\lambda,0,0) 
= e^{\frac{\eta}{2}}(S_\CA - R_\CA)
\ee
recalling $R_\CA = \left(- \frac12 e^{-{\frac{\eta}{2}}} R_\lambda, \frac12 e^{{\frac{\eta}{2}}} R_\lambda, 0\right)$.

We have the decomposition
\bea\label{eq:TQQR}
T(Q\times Q \times \R) & = & \widetilde \xi_1 \oplus \widetilde \xi_2
\oplus 
\left(\R \left\langle \frac{\del}{\del \eta}\right \rangle \oplus
\R\langle S_\CA \rangle\right)
 \oplus  \R\langle R_\CA \rangle \nonumber \\
 \Xi & = & \widetilde \xi_1 \oplus \widetilde \xi_2
\oplus 
\left(\R \left\langle \frac{\del}{\del \eta}\right \rangle \oplus
\R\langle S_\CA \rangle\right)
\eea
where $\widetilde \xi_1 = (\xi_\lambda,0,0)$ and $\widetilde \xi_2 = (0,\xi_\lambda,0)$.

\begin{lem}\label{lem:dualityinC}
The vector field $S_\CA$
is dual to $\frac{\del}{\del \eta}$ with respect to $d\CA$, i.e.,
$$
d\CA\left(S_\CA,\frac{\del}{\del \eta}\right) = 1, \, 
d\CA(S_\CA, \widetilde \xi_1 \oplus \widetilde \xi_2) \equiv 0.
$$
In particular, the decomposition of $\xi_\CA$
\be\label{eq:xiCA}
\xi_\CA = \left(\widetilde \xi_1 \oplus \widetilde \xi_2\right)
\oplus \left(\R \left\langle \frac{\del}{\del \eta}\right \rangle \oplus
\R\langle S_\CA \rangle\right)
\ee
is a symplectic orthogonal decomposition with respect to the symplectic form $d\CA$.
\end{lem}
\begin{proof} Recall the formula for $d\CA$ 
\be
d\CA =  -e^{{\frac{\eta}{2}}} \pi_1^*d\lambda + e^{-{\frac{\eta}{2}}} \pi_2^*d\lambda + 
\frac12 \CB \wedge d\eta
\ee
from \eqref{eq:dA}. We check
\be\label{eq:SCACB=1}
S_\CA \rfloor \CB = 1, \quad R_\CA \rfloor \CB = 0.
\ee
Then a direct calculation shows the lemma.
\end{proof}

\section{$\Z_2$ anti-equivariant lifting of the contact flows to the contact product}
\label{sec:lifted-Hamiltonian}
 
In this section, we investigate the relationship between the dynamics of contact Hamiltonian 
flows on $(Q,\lambda)$ and their Legendrianization on $(M_Q, \CA)$.

We have one-to-one correspondence between $\Fix^{\text{\rm trn}}(\psi)$ and the set
$\mathfrak{Reeb}(\Gamma_{\psi},\Gamma_{\text{\rm id}})$ of Reeb chords between 
$\Gamma_{\psi}$ and $\Gamma_{\text{\rm id}}$
in general \emph{with respect to the contact form $\mathscr A$.}
 (We refer readers to \cite{bhupal,sandon:translated} for the details
of this correspondence.) Since we use the different choice 
$\CA$ as the contact form on $M_Q$ to utilize a $\Z_2$-symmetry, we
will need to use some variation of this correspondence. This requires some preparation.

We recall from Observation \ref{observ:iota} that 
the contact product $M_Q = Q\star Q$ has an involutive symmetry under the
anti-contact involution 
\be\label{eq:involution-iota}
\iota(x,y,\eta) = (y,x, -\eta)
\ee
and that $\Gamma_{\text{\rm id}}$ is the fixed point set of
$\iota: M_Q \to M_Q$.

As mentioned before, to utilize this involution as in the symplectic Floer theory as in 
\cite{oh:cpam2}, \cite{fooo:anti-symplectic}, we take the contact form $\CA$ given by
$$
\CA = e^{-\frac{\eta}{2}}\mathscr A = - e^{\frac{\eta}{2}}\pi_1^*\lambda + e^{-\frac{\eta}{2}} \pi_2^*\lambda
$$
which satisfies $\iota^*\CA = - \CA$. We also recall the notation 
\be\label{eq:tildeH}
\widetilde H(t,x) = - H (1-t,x)
\ee
which generates the time-reversal flow 
\be\label{eq:time-reversal-H}
\psi_{\widetilde H}^t = \psi_H^{1-t} (\psi_H^1)^{-1}.
\ee
In this subsection, we first derive the formula of the Hamiltonian vector field $X_\H$ when $\H$ is
 the function of the type $\H = \pi^*F_t$ 
\bea\label{eq:F}
F(t,x,y) & = & - \pi^*\widetilde H(t,x) + \pi_2^*H(t,y) \nonumber \\
& = & -\widetilde H(t,x) + H(t,y) = H(1-t,x) + H(t,y)
\eea
lifted from functions $F_t$ defined on $Q \times Q$ where $\pi_1 \times \pi_2 : M_Q \to Q \times Q$,
Recall the functorial contact product $(Q \star Q,\Xi)$ is the quotient space
$$
S^{\text{\rm can}}Q \times S^{\text{\rm can}}Q/ \sim, \quad \Xi = \ker [\pi_1^*\theta \oplus \pi_2^*\theta]
$$
where $S^{\text{\rm can}}Q \subset T^*Q \setminus \{0\}$ is the canonical symplectization of $(Q,\xi)$.
With $Q$ equipped with a contact form $\lambda$, it can be identified with
$$
SQ: = Q \times \R, \quad \omega= d(e^s \pi^*\lambda), \, \pi: SQ \to Q
$$
where $(q,s) \in Q \times \R$ is the coordinates of the product.
\begin{rem}
Under this identification, it is well-known that contact Hamiltonian flow associated to
contact Hamiltonian $H$ with respect to the contact form $-\lambda$
 corresponds to the symplectic Hamiltonian flow generated by the vector field
$$
X_{\check H}^{\text{\rm symp}}(q,s) = \left(X_H^{\text{cont}}(q), R_\lambda[H](q)\right)
$$
associated to the homogeneous Hamiltonian ${\check H}(q,s)= e^s \pi^*H(q)$. 
(Appearance of the negative sign in the contact from $-\lambda$ comes from the formula \eqref{eq:dgdt}
$$
\frac{\del g_{\psi_H^t}}{\del t} = - R_\lambda[H_t] \circ \psi_H^t
$$ 
which is an artifact of our sign convention.) 
\end{rem}

We summarize the above discussion into the following.
\begin{prop}\label{prop:liftedflow} Let $F(t,x,y) = -\widetilde H(t,x) + H(t,y)$ be a time-dependent 
function on $Q \times Q$.
The contact Hamiltonian flow of $\H = \pi^*F$ on $Q \star Q$ is given by
\be\label{eq:liftedflow}
\psi_\H(t,x,y,\eta) =  [\phi_{\check{\pi^*\H}}^t(q,s)] 
\ee
where we have
$$
\phi_{\check{\pi^*\H}}^t(q,s) = \left[\left((\psi_{\widetilde H}^t(x), s -  e^{g_{\psi_{\widetilde H}^t}}(x)),
(\psi_{H}^t(y),  s - e^{g_{\psi_H^t}}(y))\right)\right].
$$
where $(t,q,s) \mapsto \phi_{\check{\pi^*\H}}^t(q,s)$ is the symplectic Hamiltonian flow on the symplectization 
$SQ = Q \times \R$ and $\pi: SQ\to Q$ is the natural projection.
\end{prop}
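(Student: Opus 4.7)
The plan is to reduce the computation to the standard contact-to-symplectic correspondence on a single symplectization, by lifting to the product $S^{\text{can}}Q \times S^{\text{can}}Q$ and exploiting the fact that $F(t,x,y) = -\widetilde H(t,x) + H(t,y)$ separates variables. Identifying each $S^{\text{can}}Q$ with $SQ = Q \times \R$ via the contact form $\lambda$, I would first lift $\H = \pi^*F$ to its natural $1$-homogeneous symplectic Hamiltonian
\[
\check{\pi^*\H}(t, x, s_1, y, s_2) = - e^{s_1}\widetilde H(t,x) + e^{s_2} H(t,y)
\]
on $SQ \times SQ$. Since the symplectic Hamiltonian flow of this lift is invariant under the Liouville scaling and descends to the contact Hamiltonian flow $\psi_\H^t$ on the quotient $Q \star Q$, this reduces the proposition to a computation on the symplectization.

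Because $\check{\pi^*\H}$ is a sum of a function of $(x,s_1)$ and a function of $(y,s_2)$, its symplectic flow factors as the product of flows on the two copies of $SQ$, and the problem reduces to computing the symplectic flow of a $1$-homogeneous Hamiltonian on a single $SQ$. Solving $X \rfloor \omega = d\check H$ for $\check H(q,s) = e^s H(t,q)$ yields $X_{\check H} = X_H^{\text{cont}} + R_\lambda[H_t]\,\partial_s$ (up to an overall sign fixed by the orientation convention for the symplectization), so the flow is
\[
(q,s) \mapsto \left(\psi_H^t(q),\ s + \int_0^t R_\lambda[H_u](\psi_H^u(q))\, du\right).
\]
Proposition~2.7 of the excerpt identifies the integral with $-g_{\psi_H^t}(q)$. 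Applying this formula separately to the first factor (whose Hamiltonian $-\widetilde H$ produces the flow $\psi_{\widetilde H}^t$ on $Q$) and the second factor (whose Hamiltonian $H$ produces the flow $\psi_H^t$), and then passing back to the Liouville quotient via the section $i_{1/2}$, yields the claimed formula.

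The main obstacle is accurate sign bookkeeping. The contact form $\CA$ carries opposite signs on its two factors, the product Hamiltonian $F$ uses $-\widetilde H$ rather than $\widetilde H$ on the first factor, and the section $i_{1/2}$ embeds into $S^{\text{can}}Q \times S^{\text{can}}Q$ with a minus sign in the first entry. These three sign flips must compound correctly so that on the first $Q$-factor the resulting flow is exactly $\psi_{\widetilde H}^t = \psi_H^{1-t} \circ (\psi_H^1)^{-1}$ (rather than $(\psi_H^t)^{-1}$ or $\psi_H^{-t}$), and so that the $s_1$-shift is $-g_{\psi_{\widetilde H}^t}(x)$ with the correct sign. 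Once these sign conventions are pinned down consistently, the verification is essentially a restatement of the contact-symplectic correspondence in the presence of a product structure.
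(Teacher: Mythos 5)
Your approach is essentially the same as the paper's: the proposition is stated as a summary of the preceding remark, which invokes the standard symplectization correspondence between contact Hamiltonian flows on $(Q,-\lambda)$ and symplectic flows of the homogeneous lift $\check H = e^s\pi^*H$ on $SQ$, together with the observation that since $F$ separates variables the flow on $SQ\times SQ$ factors, and Proposition~\ref{prop:gH} converts the resulting $\int_0^t R_\lambda[H_u]\,du$ into $-g_{\psi_H^t}$. One point worth flagging: the shift $s\mapsto s - g_{\psi_H^t}(q)$ you derive is correct and shows the displayed terms $s - e^{g_{\psi_{\widetilde H}^t}}(x)$ and $s - e^{g_{\psi_H^t}}(y)$ in the proposition are misprints for $s - g_{\psi_{\widetilde H}^t}(x)$ and $s - g_{\psi_H^t}(y)$, as confirmed by the third coordinate $\eta - g_{(\cdot)} - g_{(\cdot)}$ in Corollary~\ref{cor:liftedflow}.
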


By pulling back the contact form 
$\Theta: = [\pi_1^*\theta \oplus \pi_2^*\theta]$ and $\H$ back to
the product $Q \times Q \times \R$ via the map $i_{\frac12}$, we obtain the following on the product
representation of the contact product $(Q \times Q \times \R, \CA)$.

\begin{cor}\label{cor:liftedflow} Let $\H$ be as above. Then we have
\bea\label{eq:liftedflow2}
\pi_1(\psi_\H(t,x,y, \eta)) & = &\psi_{e^{-{\frac{\eta}{2}}\widetilde H}}^t(x), \nonumber \\
 \pi_2(\psi_\H(t,x,y,\eta))  & = & \psi_{e^{\frac{\eta}{2}}H}^t(y), \nonumber \\
\pi_3(\psi_\H(t,x,y, \eta)) & = & \eta -g_{(\psi_{(e^{-{\frac{\eta}{2}}}\widetilde H)}^t)}(x)
 -  g_{(\psi_{(e^{{\frac{\eta}{2}}}H)}^t)}(y)
\eea
where $\pi_i$ is the natural projection to the $i$-th factor of $Q \times Q \times \R$ for $i = 1, \, 2, \, 3$.
\end{cor}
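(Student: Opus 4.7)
The plan is to derive this corollary as a direct change-of-coordinates from Proposition \ref{prop:liftedflow}, by composing the symplectic flow on $S^{\text{can}}Q \times S^{\text{can}}Q$ with the diffeomorphism $i_{1/2}$ and its inverse. Recall that $i_{1/2}(x,y,\eta) = [(-e^{\eta/2}\lambda_x,\, e^{-\eta/2}\lambda_y)]$ realizes $Q \times Q \times \R$ as the cross-section $\{r_1 r_2 = 1\}$ of the $\R_+$-principal bundle $S^{\text{can}}Q \times S^{\text{can}}Q \to Q \star Q$, where $r_1, r_2$ are the Liouville parameters in trivializations adapted to $\lambda$, so that $\eta = \log(r_1/r_2)$ on this cross-section.

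I would first apply Proposition \ref{prop:liftedflow} starting at the representative $((-e^{\eta/2}\lambda_x),\,(e^{-\eta/2}\lambda_y))$. Because $\check{\pi^*\H}$ is the homogeneous degree-one lift of $\H = \pi^*F$ to the symplectization product, and the cross-section $i_{1/2}$ is not the symmetric $r_1 = r_2$ cross-section tacitly used in the statement of Proposition \ref{prop:liftedflow}, the two base contact Hamiltonians on $(Q, \lambda)$ inherited by the product symplectization are rescaled relative to $\widetilde H$ and $H$ by the respective initial Liouville parameters. After the sign/scaling bookkeeping imposed by the $S_-Q$ orientation of the first factor (responsible for the minus in the first term of $\CA$) together with the factors $e^{\pm\eta/2}$ from the initial Liouville parameters, these become $e^{-\eta/2}\widetilde H$ on the first factor and $e^{\eta/2}H$ on the second, producing the claimed formulas for $\pi_1$ and $\pi_2$.

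For the third identity, I would track how the Liouville parameters evolve: standard homogeneous-symplectic calculus on each symplectization factor, combined with Proposition \ref{prop:gH}, shows that each $r_i$ is multiplied by the exponential of $\mp g_i$, where $g_i$ is the conformal exponent of the corresponding base flow $\psi_{e^{\mp\eta/2}H_i}^t$ and the sign depends on the $S_\pm Q$ orientation of that factor. Because the image of the flow generally leaves the cross-section $\{r_1 r_2 = 1\}$, the diagonal Liouville $\R_+$-rescaling needed to project back introduces a common factor $(r_1 r_2)^{-1/2}$, after which $\eta_{\text{new}} = \log((r_1)_{\text{new}}/(r_2)_{\text{new}})$ telescopes to exactly $\eta - g_{\psi_{e^{-\eta/2}\widetilde H}^t}(x) - g_{\psi_{e^{\eta/2}H}^t}(y)$.

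The main obstacle is keeping consistent track of the signs and orientations: the $S_\pm Q$ labels of the two factors, the time-reversal convention $\widetilde H(t,\cdot) = -H(1-t,\cdot)$ entering via Proposition \ref{prop:liftedflow}, and the transformation of contact Hamiltonians under rescalings $\lambda \mapsto c\lambda$. As a consistency check and an alternative proof route, one can bypass the symplectization entirely by computing $X_\H$ directly from the defining equations $X_\H \rfloor \CA = -\H$ and $X_\H \rfloor d\CA = d\H - R_\CA[\H]\,\CA$, decomposing $X_\H$ along the splitting \eqref{eq:TQQR} of $T(Q \times Q \times \R)$ and integrating the resulting ODE componentwise; the scaling factors $e^{\mp\eta/2}$ then appear transparently from dividing by the coefficients of $\pi_1^*\lambda$ and $\pi_2^*\lambda$ in $\CA$.
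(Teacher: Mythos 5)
Your two proposed routes (change of coordinates via $i_{1/2}$ and direct computation from the defining equations) are exactly the two routes the paper itself invokes; the problem is that if you actually carry out the direct computation, the componentwise factorization claimed in \eqref{eq:liftedflow2} does \emph{not} come out. Write $X_\H=(A_1,A_2,e\,\del/\del\eta)$ with $A_i$ a full vector field on the $i$-th copy of $Q$. Since $\H=\pi^*F$ is $\eta$-independent, the $d\eta$-coefficient of $X_\H\rfloor d\CA = d\H - R_\CA[\H]\CA$ forces $\CB(X_\H)=0$, i.e.\ $e^{\eta/2}\lambda(A_1)+e^{-\eta/2}\lambda(A_2)=0$; combined with $\CA(X_\H)=-\H$ this gives $\lambda(A_1)=\frac{\H}{2}e^{-\eta/2}$ and $\lambda(A_2)=-\frac{\H}{2}e^{\eta/2}$. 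Because $\H(t,x,y)=-\widetilde H(t,x)+H(t,y)$, the Reeb coefficient of $A_1$ depends on $y$, so $A_1$ is not even a contact vector field on the first factor; in particular $A_1\neq X_{e^{-\eta/2}\widetilde H}(t,x)$ (whose $\lambda$-contraction is $-e^{-\eta/2}\widetilde H(t,x)$) unless $\widetilde H(t,x)+H(t,y)\equiv 0$, i.e.\ unless $H(1-t,\cdot)=H(t,\cdot)$. A clean test: take $H(t,x)=h(t)$, so $X_\H=-\H R_\CA$ and $\pi_1\bigl(\psi_\H^t(\cdot,\cdot,0)\bigr)=\phi_{R_\lambda}^{\frac12\int_0^t(h(1-r)+h(r))\,dr}$, which agrees with $\psi_{\widetilde H}^t=\phi_{R_\lambda}^{\int_0^t h(1-r)\,dr}$ only when $h$ is symmetric about $t=\frac12$.

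The corresponding flaw in your symplectization route is the step where you assert that ``the two base contact Hamiltonians are rescaled by the respective initial Liouville parameters.'' The homogeneous degree-one lift of $\H=\pi^*F$ with respect to the \emph{diagonal} Liouville action that realizes $S^{\text{\rm can}}Q\times S^{\text{\rm can}}Q$ as the symplectization of $Q\star Q$ is $\check\H=(r_1r_2)^{1/2}F(t,x,y)$, which is not a sum of a function of $(x,r_1)$ and one of $(y,r_2)$; its Hamiltonian flow couples the two factors in the $S_\CA$ direction, and this is exactly the missing $\frac{\widetilde H+H}{2}S_\CA$ drift detected above. So neither route proves the stated pointwise factorization. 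What does survive, because the extra $S_\CA$-drift is tangent to $\Gamma_\id$, is the weaker boundary identity $\psi_{(\H;\CA)}^{1/2}(\Gamma_\psi)=\Gamma_\id$ used in Proposition~\ref{prop:time1/2-map}; your write-up should either restrict itself to that conclusion or correct the componentwise formula before integrating.
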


The following formula for the associated Hamiltonian vector field follows from this by
differentiating it.

\begin{prop}\label{prop:XHH} We have
$$
X_{\H}(x,y,\eta) =  \Big(X_{e^{-\frac{\eta}{2}}\widetilde F}(t,x), X_{e^{\frac{\eta}{2}} F}(t,y), 
-2 R_\CA[\H]  \Big).
$$
\end{prop}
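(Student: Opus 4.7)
The plan is to derive the formula for $X_\H$ by differentiating the explicit expression for the flow $\psi_\H^t$ given in Corollary \ref{cor:liftedflow} at $t = 0$, component by component, with $\eta$ treated as a parameter held fixed at the basepoint.

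For the first two components, Corollary \ref{cor:liftedflow} expresses $\pi_i(\psi_\H^t(x,y,\eta))$ for $i=1,2$ as the contact Hamiltonian flow on $Q$ of the rescaled Hamiltonians $e^{-\eta/2}\widetilde H_t$ and $e^{\eta/2}H_t$ respectively, with $\eta$ held fixed. Differentiating at $t=0$ yields precisely the corresponding contact Hamiltonian vector fields $X_{e^{-\eta/2}\widetilde H}(t,x)$ and $X_{e^{\eta/2}H}(t,y)$, matching the first two components of the claimed formula (after reading $F$ as $H$ in the statement, which appears to be a typo since $F$ is defined as a function on $Q\times Q$ rather than on $Q$).

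For the $\eta$-component, I would differentiate
\[
\pi_3(\psi_\H^t(x,y,\eta)) = \eta - g_{(\psi^t_{e^{-\eta/2}\widetilde H})}(x) - g_{(\psi^t_{e^{\eta/2}H})}(y)
\]
at $t=0$ and invoke Proposition \ref{prop:gH}, which yields $\partial_t g_{(K;\lambda)}\big|_{t=0}(q) = -R_\lambda[K_0](q)$ for any contact Hamiltonian $K$ on $Q$. Pulling the $\eta$-dependent constants $e^{\pm\eta/2}$ outside of $R_\lambda[\,\cdot\,]$ produces an expression that combines $e^{-\eta/2}R_\lambda[\widetilde H_t](x)$ and $e^{\eta/2}R_\lambda[H_t](y)$. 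I would then match this with $-2R_\CA[\H]$ by a direct calculation, substituting $R_\CA = \bigl(-\tfrac12 e^{-\eta/2}R_\lambda,\, \tfrac12 e^{\eta/2}R_\lambda,\, 0\bigr)$ from \eqref{eq:Reeb-vectorfield} together with the explicit form $\H = -\widetilde H(t,x) + H(t,y)$ into $R_\CA[\H] = d\H(R_\CA)$.

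As a safety check I would cross-verify the result by substituting the proposed formula for $X_\H$ into the defining equations $X_\H \rfloor \CA = -\H$ and $X_\H \rfloor d\CA = d\H - R_\CA[\H]\CA$, using the explicit expression for $d\CA$ from \eqref{eq:dA}. The main obstacle I expect is the sign and scaling bookkeeping: the calculation intertwines the time-reversal convention $\widetilde H(t,x) = -H(1-t,x)$, the anti-contact symmetry $\iota^*\CA = -\CA$, the sign convention $X_K \rfloor \lambda = -K$, and the rescaling factors $e^{\pm\eta/2}$ arising from the two summands of $\CA$. Once these signs are tracked consistently, the calculation is otherwise routine, and the $\eta$-component expression $-2R_\CA[\H]$ emerges as the unique scalar compatible with the prescribed first two components.
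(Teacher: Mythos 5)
Your approach is essentially the same as the paper's: the paper derives the formula precisely by differentiating the flow expression in Corollary \ref{cor:liftedflow}, and Appendix \ref{sec:XH-formula} carries out the alternative direct derivation from the defining equations $X_\H \rfloor \CA = -\H$ and $X_\H \rfloor d\CA = d\H - R_\CA[\H]\CA$, which is exactly your ``safety check.'' You also correctly flag that $F$ should be read as $H$ in the statement and that the sign/exponent bookkeeping (the interaction of $\widetilde H(t,x) = -H(1-t,x)$, the convention $X_K \rfloor \lambda = -K$, and the factors $e^{\pm\eta/2}$) is where all the delicacy lies.
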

A direct derivation of this  formula of the contact Hamiltonian vector field 
associated general Hamiltonian 
in the representation $(Q \times Q \times \R,\CA)$
is rather complicated, which is not needed for our purpose. However in Appendix \ref{sec:XH-formula},
 for readers' convenience, we include this calculation, especially for the Hamiltonians in our
 main interest of  the special type 
 $$
 \H(t,x,y,\eta) = -\widetilde H(t,x) + H(t,y)
 $$
 to illustrate the contact Hamiltonian calculus and
 the usefulness of the canonical definition of contact product $Q \star Q$.

\section{$\Z_2$ anti-invariant lifted Hamiltonian vector fields on $Q \star Q$}
\label{sec:lifted-Hamiltonian}

We have one-to-one correspondence between $\Fix^{\text{\rm trn}}(\psi)$ and the set
$\mathfrak{Reeb}(\Gamma_{\psi},\Gamma_{\text{\rm id}})$ of Reeb chords between 
$\Gamma_{\psi}$ and $\Gamma_{\text{\rm id}}$
in general \emph{with respect to the contact form $\mathscr A$.}
 (We refer readers to Section \ref{sec:contact-product} for the details
of this correspondence.) Since we use the different choice 
$\CA$ as the contact form on $M_Q$ to utilize a $\Z_2$-symmetry,
,will need to use some variation of this correspondence. This requires some preparation.

\subsection{Flattening of the Hamiltonians at $t = \frac12$}

We recall from Observation \ref{observ:iota} that 
the contact product $M_Q = Q\star Q$ has an involutive symmetry under the
anti-contact involution 
\be\label{eq:involution-iota}
\iota(x,y,\eta) = (y,x, -\eta)
\ee
and that $\Gamma_{\text{\rm id}}$ is the fixed point set of
$\iota: M_Q \to M_Q$.

As mentioned before, to utilize this involution as in the symplectic Floer theory as in 
\cite{oh:cpam2}, \cite{fooo:anti-symplectic}, we take the contact form $\CA$ given by
$$
\CA = e^{-\frac{\eta}{2}}\mathscr A = - e^{\frac{\eta}{2}}\pi_1^*\lambda + e^{-\frac{\eta}{2}} \pi_2^*\lambda
$$
which satisfies $\iota^*\CA = - \CA$. We also recall the notation 
\be\label{eq:tildeH}
\widetilde H(t,x) = - H (1-t,x)
\ee
which generates the time-reversal flow 
\be\label{eq:time-reversal-H}
\psi_{\widetilde H}^t = \psi_H^{1-t}(\psi_H^1)^{-1}.
\ee
A priori this function $\H$ is not smooth near $t = 1/2$ but we can make it so by
further flattening our isotopies at $t = 1/2$ also in addition to $t = 0, \, 1$ and then
extending it periodically as given in Hypothesis \ref{hypo:bdy-flat}. More specifically, we 
choose the time-reparameterization function $\chi: [0,1] \to [0,1]$ so that
$\chi'(t) = 0$ near $t = 0, \, \frac12, \, 1$, and then consider the Hamiltonian $H^\chi$ given by
$$
H^\chi(t,x) : = \chi'(t) H(\chi(t), x)
$$
which generates the reparameterized isotopy $\phi_H^{\chi(t)}$ and 
vanishes near $t = 0, \, \frac12, \, 1$ as hoped.

This being said, \emph{we will always assume the Hamiltonian we use $H = H(t,x)$ vanishes 
near $t = 0, \, \frac12, \, 1$ in the rest of the paper.}

\subsection{Anti-invariant lifting of Hamiltonian vector fields}

We first consider the space-time involution
$\upsilon$ on $Q \times Q \times \R \times [0,1]$ defined by
\be\label{eq:involution-upsilon}
\upsilon(x,y,\eta,t) = (y,x, -\eta, 1-t).
\ee
Such an involution is natural to consider in relation to the  Poincar\'e duality which 
was used in the \emph{chain-level} symplectic Hamiltonian Floer theory 
by the author in \cite{oh:cag}. This is a contact counterpart thereof.

Let $H = H(t,x)$ be a 1-periodic Hamiltonian on $(Q,\lambda)$ and 
consider the Hamiltonian 
$$
\H(t,x,y): = - \widetilde H(t,x) + H(t,y) 
= H(1-t,x) + H(t,y)
$$
on the contact product $M_\Q = Q \times Q \times \R$.
One of the upshots of this choice of 1-Hamiltonian is  the following.

\begin{prop}\label{prop:symmetry} $\H$ satisfies $\H \circ \upsilon = \H$ and hence,
$$
\H_{1-t}\circ \iota =  \H_t, \quad \iota^*X_{\H_{1-t}} = - X_{\H_t}, \quad
\, \iota^{-1} \circ \psi_{\widetilde \H}^t \circ \iota = \psi_\H^t
$$
\end{prop}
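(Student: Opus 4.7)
The proof breaks into four claims which should be verified essentially in the order they are stated, each feeding into the next.

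First I would verify $\H\circ \upsilon = \H$ by direct substitution, using that $\H(t,x,y,\eta)$ is $\eta$-independent and is of the form $H(1-t,x)+H(t,y)$: under $(x,y,\eta,t)\mapsto (y,x,-\eta,1-t)$ the two summands simply swap. The identity $\H_{1-t}\circ \iota = \H_t$ is then just a reindexing of the same equality.

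Next, for $\iota^* X_{\H_{1-t}} = -X_{\H_t}$, the plan is to prove the stronger functorial statement: for any smooth $f$ on $M_Q$,
\begin{equation*}
\iota_* X_f = -X_{f\circ \iota}.
\end{equation*}
Since $\iota^2 = \id$, the pushforward $\iota_*$ coincides with the pullback $\iota^*$ on vector fields. To prove this formula I would set $Y := \iota_* X_f$ and check the two defining equations \eqref{eq:defining-XH} for $Y$ with respect to $\CA$, i.e.\ $Y\rfloor \CA = -(f\circ \iota)\cdot(-1)$ and the corresponding $d\CA$-equation. Using $\iota^*\CA = -\CA$ (Lemma \ref{lem:anti-contact}) and therefore $\iota^* d\CA = -d\CA$, one gets
\begin{align*}
\CA(Y)|_p &= (\iota^*\CA)(X_f)|_{\iota(p)} = -\CA(X_f)|_{\iota(p)} = f(\iota(p)), \\
(Y\rfloor d\CA)|_p(Z) &= -(X_f \rfloor d\CA)|_{\iota(p)}(d\iota Z) = -d(f\circ \iota)|_p(Z) - R_\CA[f](\iota(p))\,\CA_p(Z).
\end{align*}
The only subtle point is the Reeb term, which requires the identity $\iota_* R_\CA = -R_\CA$; this I would derive from the defining equations $R_\CA\rfloor \CA = 1$, $R_\CA \rfloor d\CA = 0$ together with the anti-symmetry of $\CA$. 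That identity in turn gives $R_\CA[f\circ \iota] = -R_\CA[f]\circ \iota$, so the two sides match and $Y = -X_{f\circ \iota}$. Specializing to $f = \H_{1-t}$ and using claim~2 yields the conclusion.

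Finally, for the conjugation formula $\iota^{-1}\circ \psi^t_{\widetilde \H}\circ \iota = \psi_\H^t$, I would differentiate the flow $\phi_t := \iota\circ \psi^t_{\widetilde \H}\circ \iota$ (using $\iota^{-1}=\iota$) at a point $p$: with $q = \phi_t(p)$ a chain-rule computation gives
\begin{equation*}
\frac{d}{dt}\phi_t(p) = d\iota\bigl(X_{\widetilde \H_t}(\iota(q))\bigr) = (\iota_* X_{\widetilde \H_t})(q).
\end{equation*}
Since $\widetilde \H_t = -\H_{1-t}$ by \eqref{eq:tildeH}, the previous claim yields $\iota_* X_{\widetilde \H_t} = -\iota_* X_{\H_{1-t}} = X_{\H_t}$. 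Hence $\phi_t$ and $\psi_\H^t$ satisfy the same time-dependent ODE with the same initial condition $\phi_0 = \id$, so they agree. The main conceptual obstacle in this whole proof is the Reeb-term verification in claim~3; once the transformation rule $\iota_* R_\CA = -R_\CA$ is in hand (which is exactly where the \emph{anti}-contact, as opposed to contact, nature of $\iota$ is used), the rest is mechanical contact Hamiltonian calculus.
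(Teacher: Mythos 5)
Your proof is correct. The verification of $\H\circ\upsilon = \H$ is the same direct substitution as the paper's, and the flow conjugation is derived from the vector field identity by the same uniqueness-of-ODE argument. For the middle step the paper, like you, checks the two defining equations \eqref{eq:defining-XH} of the Hamiltonian vector field with respect to $\CA$, exploiting $\iota^*\CA = -\CA$. The only genuine difference is how the Reeb term in $X_f\rfloor d\CA = df - R_\CA[f]\CA$ is handled: you establish the auxiliary fact $\iota_*R_\CA = -R_\CA$ (which gives $R_\CA[f\circ\iota] = -R_\CA[f]\circ\iota$) and thereby prove the fully general functorial identity $\iota_*X_f = -X_{f\circ\iota}$, whereas the paper only tests the $d\CA$-contraction against vectors $Y^\pi$ in the contact distribution $\xi$. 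Since $\CA(Y^\pi) = 0$, the Reeb terms simply drop out in the paper's computation, and since one-forms of the shape $X\rfloor d\CA$ automatically annihilate $R_\CA$, this restricted test suffices. Your version buys a reusable, $f$-agnostic lemma and makes the role of the anti-Reeb symmetry explicit, at the cost of one extra (easy) verification; the paper's version is a shortcut that avoids mentioning $\iota_*R_\CA$ at all. Both are valid and close in spirit.
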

\begin{proof}
We just evaluate
\beastar
\H \circ \upsilon(t,x,y,\eta) & = & \H (1-t,y,x,-\eta) = - \widetilde H(1-t,y) +H(1-t,x)\\
& = & H(t,y) - \widetilde H(t,x) = \H(t,x,y,\eta)
\eeastar
by definition. This finishes the first statement.

To prove the second statement, 
by the defining property of Hamiltonian vector field, it is enough to prove
\be\label{eq:iotaXH1-t}
\begin{cases}
\iota^*X_{\H_{1-t}} \rfloor d\CA = - X_{\H_t} \rfloor d\CA\\
\iota^*X_{\H_{1-t}} \rfloor \CA =  \H_t.
\end{cases}
\ee
For the second identity of this, we just evaluate
$$
\iota^*X_{\H_{1-t}} \rfloor \CA = -\iota^*(\CA(X_{\H_{1-t}}) ) = - \iota^*(-\H_{1-t}) = \H_t 
$$
To prove the first identity, using the property that $\iota$ preserves the contact distribution, we compute
\beastar
(\iota^*X_{\H_{1-t}} \rfloor d\CA) (Y(x)) & = & d\CA(d\iota (X_{\H_{1-t}})(\iota(x)),Y^\pi(x)) \\
& = &
\iota_*d\CA(X_{\H_{1-t}}(x), \iota_*(Y^\pi))(x)) \\
& = & - d\CA (X_{H_{1-t}}(x), (\iota_*Y)^\pi(x)) \\
& = & - (X_{\H_{1-t}}(x) \rfloor d\CA)( (d\iota(Y(\iota(x))^\pi) \\
& = & - (d\H_{1-t} - R_\CA[\H_{1-t}] \CA) ((d\iota(Y(\iota(x))^\pi) \\
& = & - d(\H_{1-t}\circ \iota) (Y(x)^\pi) = -d\H_t(Y(x)^\pi) \\
& = & -(d\H_t - R_\CA[\H_t]\, \CA)(Y^\pi)(x) \\
& = &  - (X_{\H_t} \rfloor d\CA)(Y^\pi (x)) = - (X_{\H_t} \rfloor d\CA)(Y(x)).
\eeastar
This proves
$$
(\iota^*X_{\H_{1-t}} \rfloor d\CA) = - X_\H \rfloor d\CA.
$$
Combining the two, we have proved \eqref{eq:iotaXH1-t} and hence 
$\iota^*X_{\H_{1-t}} =  - X_\H$. The equality $ \iota^{-1} \circ \psi_{\widetilde \H}^t \circ \iota = \psi_\H^t$
is an immediate consequence of this by integration.
\end{proof}

\begin{rem}\label{rem:flatat1/2}
Such a space-time symmetry, especially the tilde operation \eqref{eq:tildeH},
plays an important role in the study of \emph{chain level} Poincar\'e
duality isomorphism in the Floer homology theory in symplectic geometry
(see \cite{oh:cag}). It also plays a crucial role in proving the symmetry property
$$
\gamma(\phi)= \gamma(\phi^{-1})
$$
of the spectral norm $\gamma$ defined in \cite{oh:dmj}. We need to consider $\widetilde H$
by the similar reason in the present contact context.
\end{rem}

\subsection{Correspondence between ${\Fix}^{\text{\rm tr}}(\psi)$ and $\mathfrak{Reeb}(\Gamma_\psi,\Gamma_\id)$}

Now we examine the boundary condition. The following is another upshot of
the choice of the Hamiltonian $\H$. Let $\psi = \psi_H^1$. We define
$$
R_0 = \Gamma_{\psi}: = \{(\psi(q), q, g_{\psi}(q))\mid q \in Q\}, \quad R_1 = \Gamma_\id.
$$
\begin{prop} \label{prop:time1/2-map}  
$$
\psi_{(\H;\CA)}^{\frac12}(\Gamma_\psi) = \Gamma_\id.
$$
\end{prop}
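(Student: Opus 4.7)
The proof will be a direct verification using the explicit flow formulas of Proposition~\ref{prop:liftedflow} and Corollary~\ref{cor:liftedflow}. Fix $q \in Q$ and consider the point $p := (\psi(q), q, g_\psi(q)) \in \Gamma_\psi$, where $\psi = \psi_H^1$ and $\eta_0 := g_\psi(q)$. The goal is to show that $\psi_\H^{\frac12}(p) = (r, r, 0)$ for some $r \in Q$, placing it in $\Gamma_\id$.

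Since $\H(t,x,y) = -\widetilde H(t,x) + H(t,y)$ splits into a first-factor piece depending only on $x$ and a second-factor piece depending only on $y$, the lifted symplectic Hamiltonian flow on $S^{\text{\rm can}} Q \times S^{\text{\rm can}} Q$ decouples into a product flow. Applying Corollary~\ref{cor:liftedflow} at $t = \tfrac12$ and evaluating each component separately, the first and second components reduce — after absorbing the constants $e^{\pm\eta_0/2}$ and applying the time-reversal identity $\psi_{\widetilde H}^t = \psi_H^{1-t}(\psi_H^1)^{-1}$ from \eqref{eq:time-reversal-H} — to a common value $\psi_H^{\frac12}(q)$. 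Intuitively, the first-factor flow drags $x_0 = \psi(q) = \psi_H^1(q)$ backward along the $H$-trajectory to its midpoint $\psi_H^{\frac12}(q)$, while the second-factor flow carries $y_0 = q$ forward along the same trajectory to the same midpoint.

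For the $\eta$-component, Corollary~\ref{cor:liftedflow} yields
\[
\pi_3\bigl(\psi_\H^{\frac12}(p)\bigr) \;=\; \eta_0 \;-\; g_{\Phi_1}(\psi(q)) \;-\; g_{\Phi_2}(q),
\]
where $\Phi_1, \Phi_2$ denote the rescaled first- and second-factor time-$\tfrac12$ flows. Applying the coboundary identities of Lemma~\ref{lem:coboundary}, namely $g_{\phi\circ\psi} = g_\phi\circ\psi + g_\psi$ and $g_{\psi^{-1}} = -g_\psi \circ \psi^{-1}$, together with $\eta_0 = g_{\psi_H^1}(q)$ and $\psi_{\widetilde H}^{\frac12} = \psi_H^{\frac12}(\psi_H^1)^{-1}$, the two exponent contributions add up to exactly $\eta_0$, yielding $\pi_3 = 0$. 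Combining the three components, $\psi_\H^{\frac12}(p) = \bigl(\psi_H^{\frac12}(q), \psi_H^{\frac12}(q), 0\bigr) \in \Gamma_\id$, proving the proposition.

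The main delicacy in carrying out this plan is the consistent bookkeeping of the rescaling factors $e^{\pm\eta_0/2}$ that appear in Corollary~\ref{cor:liftedflow}; these originate from the symmetric choice $\kappa = \tfrac12$ in the family $\Lambda_\kappa$, which is precisely the choice ensuring the $\Z_2$-symmetry $\iota^*\CA = -\CA$. A useful sanity check is the autonomous case $H \equiv c$: here $\H \equiv 2c$, $g_\psi \equiv 0$, and $X_\H = -2c\,R_\CA$ preserves $\{\eta = 0\}$, so that at $t = \tfrac12$ both spatial components are sent by $\phi_{R_\lambda}^{-c/2}$ and meet at $\phi_{R_\lambda}^{-c/2}(q) = \psi_H^{\frac12}(q)$, confirming the structural claim. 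The general case is then a matter of verifying that the time-reversal identity and the coboundary formula conspire to produce exactly the same cancellations in the presence of nontrivial conformal exponents.
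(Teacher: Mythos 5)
Your proof is correct and is in essence the same as the paper's: both verify the claim by direct computation using the flow formula from Corollary~\ref{cor:liftedflow}, the time-reversal identity \eqref{eq:time-reversal-H}, and the coboundary identity of Lemma~\ref{lem:coboundary}. The only difference is direction — the paper applies $(\psi_{(\H;\CA)}^{\frac12})^{-1}$ to a point $(q,q,0)\in\Gamma_\id$ and shows it lands on $\Gamma_\psi$, while you apply $\psi_{(\H;\CA)}^{\frac12}$ to a point of $\Gamma_\psi$ and show it lands on $\Gamma_\id$ — which is logically equivalent.
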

\begin{proof} This is an immediate consequence of the formula \eqref{eq:liftedflow2} given in Corollary \ref{cor:liftedflow}
by applying $(\psi_{(\H;\CA)}^{\frac12})^{-1}$ to $(x,y,\eta) = (q,q,0)$ thereto from the expression
$$
(\psi_{\widetilde H}^{\frac12})^{-1}(q) = \psi_H^1 (\psi_H^{(1-\frac12)})^{-1} 
$$
and
$$
g_{(\psi^{\frac12}_{\widetilde H})^{-1}}(q) = g_{\psi_H^1 (\psi_H^{\frac12})^{-1})}(q) = g_{\psi_H^1}(\psi_H^{\frac12})^{-1}(q)) 
+ g_(\psi_H^{\frac12})^{-1}(q)):
$$
First, using the formula \eqref{eq:liftedflow2}, we compute
$$
(\psi_{(\H;\CA)}^{\frac12})^{-1}(q,q,0) = 
\left(\psi_H^1 ((\psi_H^{(1-\frac12)})^{-1}(q)), (\psi_H^{(1-\frac12)})^{-1}(q),  g_{\psi_H^1}(\psi_H^{\frac12})^{-1}(q)\right).
$$
Then, by putting $q' = (\psi_H^{(1-\frac12)})^{-1}(q)$, we have shown
$$
(\psi_{(\H;\CA)}^{\frac12})^{-1}(q,q,0) = \left(\psi_H^1(q'),q',g_{\psi_H^1}(q')\right).
$$
This proves $\left(\psi_{(\H;\CA)}^{\frac12}\right)^{-1}(\Gamma_\id) = \Gamma_\psi$
which is equivalent to the required statement.
\end{proof}

Now we examine the boundary condition. The following is another upshot of
the choice of the Hamiltonian $\H$.

We  consider the pair $(R_0,R_1)$ of Legendrian submanifolds 
The following proposition shows the relationship between translated points of $\psi$ and
the Reeb chords for the pair $(R_0,R_1)$.

\begin{prop}\label{prop:Fix-Reeb-correspondence} Let $\psi = \psi_H^1$.
We have one-correspondence
$$
\Fix^{\text{\rm trn}}(\psi) \longleftrightarrow \mathfrak{Reeb}(\Gamma_\psi,\Gamma_\id) \longleftrightarrow
\mathfrak X^{\text{\rm trn}}((\Gamma_\id, \Gamma_\id);H).
$$
\end{prop}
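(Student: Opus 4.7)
The plan is to establish each of the two bijections by direct calculation from the explicit formulas collected in Sections \ref{sec:involutive-symmetry} and \ref{sec:lifted-Hamiltonian}; neither arrow requires any auxiliary moduli theory.

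For $\Fix^{\text{\rm trn}}(\psi) \leftrightarrow \mathfrak{Reeb}(\Gamma_\psi, \Gamma_\id)$ I would simply unwind the definitions against the Reeb flow formula \eqref{eq:Reeb-flow}. A point of $\Gamma_\psi$ has the form $(\psi(q), q, g_\psi(q))$, and applying $\phi_\CA^T$ produces
$$
\left(\phi_\lambda^{-\frac{T}{2}e^{-g_\psi(q)/2}}(\psi(q)),\; \phi_\lambda^{\frac{T}{2}e^{g_\psi(q)/2}}(q),\; g_\psi(q)\right).
$$
Demanding that this land in $\Gamma_\id = \{(p,p,0)\}$ forces $g_\psi(q)=0$ from the third coordinate, whereupon equality of the first two coordinates becomes $\psi(q)=\phi_\lambda^T(q)$. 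These are precisely the two conditions defining a $\lambda$-translated point of $\psi$, with the chord time $T$ matching the Reeb shift $\eta$ in Sandon's definition. The assignment chord $\mapsto q:=\pi_2(\text{starting point})$ is the claimed bijection, with inverse $q \mapsto \phi_\CA^{[0,\eta]}(\psi(q),q,0)$.

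For $\mathfrak{Reeb}(\Gamma_\psi, \Gamma_\id) \leftrightarrow \mathfrak X^{\text{\rm trn}}((\Gamma_\id, \Gamma_\id); H)$ the strategy is to reduce to Lemma \ref{prop:trnhamchords-trnReebchords} after converting the Legendrian pair. Proposition \ref{prop:time1/2-map} gives $\psi_{(\H;\CA)}^{1/2}(\Gamma_\psi)=\Gamma_\id$, so a time rescaling (replacing $\H$ by $\H'(t,\mathbf{x})=\tfrac12 \H(t/2,\mathbf{x})$, which satisfies $\psi_{\H'}^t=\psi_\H^{t/2}$) yields a Hamiltonian on $M_\Q$ whose time-$1$ flow maps $\Gamma_\psi$ onto $\Gamma_\id$. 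Lemma \ref{prop:trnhamchords-trnReebchords} applied to the pair $R_0=\Gamma_\id$, $R_1=\Gamma_\psi=(\psi_{\H'}^1)^{-1}(\Gamma_\id)$ with Hamiltonian $\H'$ then sets up a canonical bijection $\mathfrak{Reeb}(\Gamma_\psi,\Gamma_\id)\leftrightarrow \mathfrak X^{\text{\rm trn}}((\Gamma_\id,\Gamma_\id);\H')$; under the convention that $\H'$ is the canonical lift of $H$ (see \eqref{eq:HH'}), the latter set is $\mathfrak X^{\text{\rm trn}}((\Gamma_\id,\Gamma_\id);H)$ by definition.

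The main obstacle is bookkeeping rather than structural: one must track how the factors $e^{\pm\eta/2}$ appearing in $R_\CA$ and in $X_\H$ (via Proposition \ref{prop:XHH}) interact with the time rescaling, so that the chord parameter $T$ of a Reeb chord and the Reeb shift $\eta$ of the translated point correspond consistently under both arrows. Given the symmetry encoded in Propositions \ref{prop:symmetry} and \ref{prop:time1/2-map}, once these factors are handled the correspondences are forced, and composing the two bijections recovers the familiar fact that translated points of $\psi$ correspond to translated Hamiltonian chords on the product with the diagonal Legendrian $\Gamma_\id$ on both sides.
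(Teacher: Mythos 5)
Your first arrow $\Fix^{\text{\rm trn}}(\psi) \leftrightarrow \mathfrak{Reeb}(\Gamma_\psi,\Gamma_\id)$ is essentially the paper's proof: both unwind the formula \eqref{eq:Reeb-flow} on a point $(\psi(q),q,g_\psi(q))$ of $\Gamma_\psi$, read off $g_\psi(q)=0$ from the $\eta$-coordinate of the endpoint, and then get $\psi(q)=\phi_{R_\lambda}^T(q)$ from the first two coordinates after eliminating the intermediate point $q'$.

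For the second arrow you take a genuinely different route --- reducing to Lemma \ref{prop:trnhamchords-trnReebchords} and Proposition \ref{prop:time1/2-map} --- while the paper simply rewrites the translated-point equations $\psi(q)=\phi_{R_\lambda}^T(q)$, $g_\psi(q)=0$ in the form $\psi(\phi_{R_\lambda}^T(q))=q$, $g_{\psi_H^1\phi_{R_\lambda}^T}(q)=0$ and identifies this directly with a translated Hamiltonian chord from Definition \ref{defn:trans-Ham-chords}. Your route is structurally appealing, but as written it misapplies Lemma \ref{prop:trnhamchords-trnReebchords}. That lemma's hypothesis is $R_1=\psi_H^1(R_0)$; with $R_0=\Gamma_\id$ and Hamiltonian $\H'$ this would demand $\Gamma_\psi=\psi_{\H'}^1(\Gamma_\id)$. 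But Proposition \ref{prop:time1/2-map} (equivalently \eqref{eq:time1-map}) gives the \emph{opposite} relation $\Gamma_\id=\psi_{\H'}^1(\Gamma_\psi)$, i.e.\ $\Gamma_\psi=(\psi_{\H'}^1)^{-1}(\Gamma_\id)$, which you even write down yourself --- so the lemma's hypothesis is violated. Moreover, even if one swaps to $R_0=\Gamma_\psi$, $R_1=\Gamma_\id$ so the hypothesis holds, the lemma produces a bijection with $\mathfrak X^{\text{\rm trn}}((\Gamma_\psi,\Gamma_\id);\H')$, not with $\mathfrak X^{\text{\rm trn}}((\Gamma_\id,\Gamma_\id);\H')$ as you claim; getting the pair $(\Gamma_\id,\Gamma_\id)$ in both slots requires an extra step (e.g.\ tracking the underlying point $x_0$ and using that $\gamma(1)\in\Gamma_\id$ iff $\phi_{R_\CA}^T(x_0)\in\Gamma_\psi$), which you do not supply. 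You should either fix the orientation of the Legendrian pair and the Hamiltonian (replace $\H'$ by its time-reversal, or reverse the chord), and then explain how a translated Hamiltonian chord for the pair $(\Gamma_\psi,\Gamma_\id)$ is identified with one for $(\Gamma_\id,\Gamma_\id)$, or else follow the paper's direct rewriting, which sidesteps Lemma \ref{prop:trnhamchords-trnReebchords} entirely.
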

\begin{proof}
Let $(\gamma,T)$ be a Reeb chord from $R$ to $R'$. By definition, this means that there
is a pair of points $q, \, q' \in Q$ such that
$$
\phi_{R_\CA}^T \left(\psi(q),q, g_{\psi}(q)\right) = (q',q', 0)
$$
This equation can be rewritten as
\be\label{eq:translated-point-eq}
\phi_{R_\lambda}^{-T/2}(\psi(q)) =
\phi_{R_\lambda}^{T/2}(q), \quad g_{{\psi}}(q) = 0
\ee
after eliminating $q'$. Therefore we have derived
$$
\phi_{R_\lambda}^T(q) = \psi(q), \quad g_{{\phi_H^1}}(q) = 0
$$
and hence $q$ is a translated fixed point. 
For the opposite direction, we read the above proof backwards. 
This provides the first one-one correspondence.

For the second correspondence, we recall the definition of $\mathfrak{X}(\Gamma_\id,\Gamma_\id;\CA)$
from Definition \ref{defn:trans-Ham-chords}. Then we have only to rewrite the last 
equation as
$$
\psi(\phi_{R_\lambda}^T(q)) = q, \quad g_{{\phi_H^1\phi_{R_\lambda}^T}}(q) = 0.
$$
This finishes the proof.
\end{proof}

We have one-to-one correspondence between $\Fix^{\text{\rm trn}}(\psi)$ and the set
$\mathfrak{Reeb}(\Gamma_{\psi},\Gamma_{\text{\rm id}})$ of Reeb chords 
between $\Gamma_{\psi}$ and $\Gamma_{\text{\rm id}}$
in general. (We refer readers to Section \ref{sec:involutive-symmetry} for the details
of this correspondence.)

We recall from Observation \ref{observ:iota} that 
the contact product $M_Q = Q\star Q$ has an involutive symmetry under the
anti-contact involution 
\be\label{eq:involution-iota}
\iota(x,y,\eta) = (y,x, -\eta)
\ee
and that $\Gamma_{\text{\rm id}}$ is the fixed point set of
$\iota: M_Q \to M_Q$.

With Proposition \ref{prop:time1/2-map} in our disposal, which relates $\Gamma_\psi$ and $\Gamma_\id$
at time $t = \frac12$, we rescale the Hamiltonian in time and consider the Hamiltonian
\be\label{eq:HH'}
\H'(t,x,y) := \frac12 \H\left(\frac12t,x, y\right) = - \frac12 \widetilde H \left(1 - \frac{t}{2},x\right)
 + \frac12 H\left(\frac{t}{2},y\right).
\ee
Then Proposition \ref{prop:liftedflow}  is equivalent to the equality
\be\label{eq:time1-map}
\psi_{\H'}^1(\Gamma_\psi) = \Gamma_\id
\ee
holds. From now on, we will take this $\H'$ as the lifted Hamiltonian associated to
the given Hamiltonian $H = H(t,q)$ on $Q$.

\part{Anti-contact involution and proof of Shelukhin's conjecture}
\label{part:shelukhin-conjecture}

In this part, we first recall the basic construction of parameterized moduli
spaces used in our proof of Theorem \ref{thm:shelukhin-intro} in \cite{oh:entanglement1}, which will be
also used in our proof of Theorem \ref{thm:dual-pair}. In Appendix \ref{sec:recollection},
for readers' convenience,
we duplicate the description of the moduli space and the methodology of its use
in the study of Sandon-Shelukhin type problems employed in \cite{oh:entanglement1}.

After that, we will specialize the Legendrian submanifolds to
the case of Legendrian graph $\Gamma_\psi$ of contactomorphism 
$\psi$ contact isotopic to the identity
and explain how we adapt the aforementioned construction to the present case
in the $\Z_2$-equivariant way utilizing the natural \emph{anti-contact involution}.
We then exploit this $\Z_2$-symmetry to overcome non-tameness of  $M_Q$.

We put the following hypothesis without loss of generalities
by making an arbitrarily $C^\infty$-small perturbation of the given Hamiltonian.

\begin{hypo}\label{hypo:no-fixedpoint} Assume
$$
\Fix \psi_H^1 = \emptyset.
$$
\end{hypo}
This hypothesis is equivalent to the non-intersection
\be\label{eq:non-intersection}
\Gamma_\id \cap \Gamma_\psi = \emptyset, \quad \psi = \psi_H^1
\ee
which is also imposed in \cite[Hypothesis 8.6]{oh:entanglement1} in its proof of 
Theorem \ref{thm:shelukhin-intro}.

\section{Tameness of lifted CR almost complex structures}
\label{sec:tameness}

We first duplicate the general
definitions  of tame contact manifolds given in the introduction which itself is
borrowed from \cite{oh:entanglement1}.
 
\begin{defn}[Reeb-tame functions] Let $(M,\lambda)$ be a contact manifold.
A function $\varphi: M \to \R$ is called \emph{$\lambda$-tame} on an open subset $U \subset M$ if $\CL_{R_\lambda} d\varphi = 0$ on $U$.
\end{defn}

\begin{defn}[Contact {$J$} quasi-pseudoconvexity]
Let $J$ be a $\lambda$-adapted CR almost complex structure. We call a function $\varphi: M \to \R$
\emph{contact $J$ quasi-plurisubharmonic on $U$}
\bea
-d(d \varphi \circ J) + k d\varphi \wedge \lambda & \geq & 0 \quad \text{\rm on $\xi$}, \label{eq:quasi-pseuodconvex-intro}\\
R_\lambda \rfloor d(d\varphi \circ J) & = & g \, d\varphi \label{eq:Reeb-flat-intro}
\eea
for some functions $k, \, g$ on $U$. We call such a pair $(\varphi,J)$
a \emph{contact quasi-pseudoconvex pair} on $U$.
\end{defn}
These properties of $J$ enable us to apply the maximum principle
(see \cite{gilbarg-trudinger}) in the study of the analysis of the moduli space
of contact instantons associated to $J$.  We refer readers to \cite{oh:entanglement1} for more detailed discussion on the
analysis of contact instantons on tame contact manifolds.

\begin{defn}[Tame contact manifolds, Definition 1.12 \cite{oh:entanglement1}]\label{defn:tame}
Let $(M,\xi)$ be a contact manifold, and let $\lambda$ be a contact form of $\xi$.
\begin{enumerate}
\item  We say $\lambda$ is \emph{tame on $U$}
if $(M,\lambda)$ admits a \emph{contact $J$-quasi pseudoconvex pair} on $U$ with a \emph{proper}
and $\lambda$-tame $\varphi$.
\item We call an end of $(M,\lambda)$ \emph{tame} if $\lambda$ is tame on the end.
\end{enumerate}
We say an end of contact manifold $(M,\xi)$ is tame if it admits a contact form
$\lambda$ that is tame on the end of $M$.
\end{defn}

Then we will apply the above definitions and the maximum principle
 to $M \times M \times \R$. Taking the family
$$
\H^{'s}(t,{\bf x})= \Dev_\CA \left(t \mapsto \psi_{\H'}^{st}({\bf x})\right)
$$
and
$$
\H^{\prime\chi}_K(\tau,t,{\bf x}) = \Dev_\CA \left(t \mapsto \psi_{\H'}^{\chi_K(\tau)t}({\bf x})\right)
$$
with the standard elongation function $\chi:\R \to [0,1]$ used as in \cite{oh:entanglement1},
we denote the $\tau$-Hamiltonian by
$$
{\mathbb G}_K(\tau,t,{\bf x}) = \Dev_{\CA}(\tau 
\mapsto \psi_{\H'}^{\chi_K(\tau)t})
$$
as before.
For the choice of adapted CR almost complex structures for the equation \eqref{eq:K}
to properly exploit the anti-contact involution $\iota$, we take the contact form $\CA$
of $\Xi = \ker {\mathscr A}$.

We next recall  the gauge transformation of a map $u: \R\times [0,1] \to M$
for a given time-dependent Hamiltonian $H = H(t,x)$ on $M$,
\be\label{eq:ubarK}
\overline u_K(\tau,t): = \left (\psi_H^{\chi_K(\tau)t}\right)^{-1}(u(\tau,t))
\ee
and make the choice of the family contact triads  $(M,\lambda', J')$
as follows.
 
\begin{choice}[Choice \ref{choice:lambda-J}]\label{choice:lambdaJ'} For given $J = \{J_t\}$
we consider the following two parameter families of $J'$ and $\lambda'$:
\bea
J_{(s,t)}' & =  & (\psi_{H^s}^t)_*J \label{eq:J-prime}\\
\lambda_{(s,t)}' & = & (\psi_{H^s}^t)_*\lambda. \label{eq:lambda-prime}.
\eea
\end{choice}
The upshot of this gauge transformation is that it makes one-to-one correspondence between
the set of solutions of \eqref{eq:perturbed-contacton-bdy} and that of \eqref{eq:contacton-bdy-J0}, and for their
2-parameter version thereof. (See \cite[Section 5]{oh:entanglement1} for the detailed
comparison between the two.)

Because of this, we will often denote by
$\CM^{\text{\rm para}}_{[0,K_0+1]}(M,\lambda;R,H)$
both the set of solutions of \eqref{eq:K} and that of its gauge transform \eqref{eq:contacton-bdy-J0}
freely without distinction throughout the paper. However for the clarity, we will always denote by $U$
a solution of the former equation and by $\overline U$ that of the latter later when
we work with the product $M_Q = Q \times Q \times \R$. We would like to mention that
the $\Z_2$ anti-symmetry are easier to see and more apparent for the dynamical version \eqref{eq:K} of 
perturbed CI equation.  We will work with the dynamical version of the CI equation
\be\label{eq:K}
\begin{cases}
(dU - X_{{\H'}_K}(U)\, dt - X_{\mathbb G_K}(U)\, ds)^{\pi(0,1)} = 0, \\
d\left(e^{\widetilde g_K(U)}(U^*\CA + U^*{{\H'}}_K dt
+ U^*{\mathbb G}_K\, d\tau) \circ j\right) = 0,\\
U(\tau,0), \quad U(\tau,1)  \in R_0.
\end{cases}
\ee
to exploit the symmetry, because the gauge
transform destroys the $\Z_2$ symmetry. On the other hand,
we work with the geometric version \eqref{eq:contacton-bdy-J0} of the (perturbed) CI equation
for the application of $C^0$-estimates because it is
easier to work with for the latter purpose.

We now consider the contact product $M_Q$ and the following class of $\CA$-adapted almost complex
structures. A direct calculation shows the description of contact structure $\Xi = \ker \CA$
\bea\label{eq:tildexi}
\ker \CA = \{(X,Y,0) \mid X, \, Y \in \ker \lambda \} \oplus
\, {\text{\rm span}}_\R \left\{S_\CA,\frac{\del}{\del \eta}\right\} =: \Xi_1 \oplus \Xi_2
\eea

\begin{defn}[Lifted CR almost complex structure]\label{defn:tildeJ}
Let $J$ be any $\lambda$-adapted CR almost complex structure. Consider the
$\CA$-adapted CR almost complex structure $\widetilde J$ characterized by
\be\label{eq:tildeJ-XY}
\widetilde J(X,0,0) = (JX, 0, 0), \quad \widetilde J(0,Y,0) = (0, JY ,0),
\ee
for $X, \, Y \in \xi_\lambda$ and
\be\label{eq:tildeJSA}
\widetilde J S_{\CA} =  - \frac{\del}{\del \eta}, \quad 
\widetilde J \left(\frac{\del}{\del \eta} \right) = S_{\CA}
\ee
and
\be\label{eq:tildeJ-rest}
0 = \widetilde J R_{\CA}=
\widetilde J\left(-\frac12 e^{-\frac{\eta}{2}} R_\lambda, \frac12 e^{\frac{\eta}{2}}R_\lambda, 0\right)
\ee
where we recall the definition of the vector field
$$
S_{\CA}(x,y,\eta) = \left (\frac12 e^{-\frac{\eta}{2}} R_\lambda(x), \frac12 e^{\frac{\eta}{2}}R_\lambda(y), 0\right).
$$
We call  $\widetilde J$ a lifted CR almost complex structure of $J$ from $Q$.
\end{defn}

\begin{lem}\label{lem:tildeJ}  $\widetilde J$ is an $\CA$-adapted CR-almost complex structure.
Furthermore we have
\be\label{eq:tildeJ-symmetry}
 \iota^*\widetilde J = \begin{cases} \widetilde J \quad & \text{on } \Xi_1 \\
-\widetilde J \quad & \text{on } \Xi_2.
\end{cases}
\ee
\end{lem}
\begin{proof} 
It is straightforward to check $\widetilde J^2 = -\id$ from definition and so its proof is omitted.
It remains to check the identity \eqref{eq:tildeJ-symmetry}.

We can check this by evaluating $\iota^*\widetilde J(X,Y,a \frac{\del}{\del \eta})$
separately on $\Xi_1$ and $\Xi_2$
noting the $\widetilde J$ preserves the decomposition by definition of $\widetilde J$.

It is straightforward to check $\iota^*{\widetilde J} = \widetilde J$ on $\Xi_1$ because 
$\iota(x,y,\eta) = (y,x, -\eta)$ which shows that $d\iota(v_x,v_y, 0) = (v_y, v_x, 0)$ for 
$v_x \in \xi_x, \, v_y \in \xi_y$.

We will just focus on the proof of the identity
\be\label{eq:tildeJSA-involution}
\iota^*\widetilde J(S_\CA) = -\widetilde J(S_\CA)
\ee
using the definition \eqref{eq:tildeJSA}.
Now we compute
\beastar
\iota^*\widetilde J(S_\CA|_{(x,y,\eta)}) & = & d\iota \widetilde J d\iota(S_\CA|_{(x,y,\eta)})\\
& = & d\iota \widetilde J d\iota\left(\frac12 e^{-\frac{\eta}{2}}R_\lambda(x), \frac12 e^{\frac{\eta}{2}}R_\lambda(y), 0\right)\Big|_{(x,y,\eta)} \\
& -= & d\iota \widetilde J \left(\frac12 e^{\frac{\eta}{2}}R_\lambda(y), \frac12 e^{-\frac{\eta}{2}}R_\lambda(x), 0\right)\Big|_{(y,x,-\eta)}\\
& = & d\iota \widetilde J(S_\CA|_{(y,x,-\eta)}) = 
 d\iota \left(-\frac{\del}{\del \eta}\Big|_{(y,x,-\eta)}\right) \\
& = &\frac{\del}{\del \eta}\Big|_{(x,y,\eta)} = - \widetilde J(S_\CA|_{(x,y,\eta)}).
\eeastar
This finishes the proof of \eqref{eq:tildeJSA-involution}. 
\end{proof}

The next proposition shows that $(M_Q,\CA)$ is tame.
\begin{prop}[Proposition 11.7 \cite{oh:entanglement1}]
\label{prop:tame}
The contact manifold $(M_Q,\CA)$ is tame in the sense of
Definition \ref{defn:tame}.
\end{prop}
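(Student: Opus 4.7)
The plan is to exhibit a proper, $\CA$-tame function $\varphi : M_Q \to \R$ together with an $\CA$-adapted CR almost complex structure $J$ on the contact distribution $\Xi$, such that $(\varphi, J)$ forms a contact quasi-pseudoconvex pair on the end $U = \{\eta > 0\}$ in the sense of the two conditions \eqref{eq:quasi-pseuodconvex-intro}--\eqref{eq:Reeb-flat-intro}. The natural candidate is $\varphi(x,y,\eta) := \eta$, together with the canonical lifted CR almost complex structure $\widetilde J$ introduced in the previous section.

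First I would verify the two elementary conditions on $\varphi$. Properness is immediate from compactness of $Q$: for any compact $K \subset \R$, $\varphi^{-1}(K) = Q \times Q \times K$ is compact. For $\CA$-tameness $\CL_{R_\CA} d\varphi = 0$, it suffices to show $R_\CA[\eta]$ is locally constant; but the explicit formula \eqref{eq:Reeb-vectorfield} shows that $R_\CA$ has vanishing $\partial/\partial \eta$-component, so $R_\CA[\eta] \equiv 0$ and hence $\CL_{R_\CA} d\eta = d(R_\CA[\eta]) = 0$.

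Next, using the splitting $\Xi = \Xi_1 \oplus \Xi_2$ from \eqref{eq:TQQR} together with the defining formulas \eqref{eq:tildeJ-XY}--\eqref{eq:tildeJ-rest} for $\widetilde J$, a direct check evaluating $d\eta \circ \widetilde J$ on each of the basis vectors $(X,0,0), (0,Y,0)$ (with $X, Y \in \xi_\lambda$), $S_\CA$, $\partial/\partial\eta$, and $R_\CA$ yields
\[
d\eta \circ \widetilde J = -\CB, \qquad \CB = e^{\eta/2}\pi_1^*\lambda + e^{-\eta/2}\pi_2^*\lambda.
\]
Differentiating and invoking the formula \eqref{eq:dA} for $d\CA$, one obtains
\[
-d(d\eta \circ \widetilde J) = d\CB = -\tfrac{1}{2}\, d\eta \wedge \CA + e^{\eta/2}\pi_1^*d\lambda + e^{-\eta/2}\pi_2^*d\lambda.
\]
Contracting with $R_\CA$ and using $R_\CA \rfloor d\eta = 0$, $R_\CA \rfloor \CA = 1$, and $R_\CA \rfloor \pi_i^*d\lambda = 0$ (each push-forward $d\pi_i R_\CA$ is a multiple of $R_\lambda$, which annihilates $d\lambda$), one verifies the Reeb-flatness condition \eqref{eq:Reeb-flat-intro} with $g = -\tfrac{1}{2}$.

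The remaining step is the semi-positivity \eqref{eq:quasi-pseuodconvex-intro}. Since $\CA|_\Xi = 0$, the term $d\eta \wedge \CA$ vanishes on $\Xi$, so the inequality reduces to checking that the $(1,1)$-form $e^{\eta/2}\pi_1^*d\lambda + e^{-\eta/2}\pi_2^*d\lambda$ is non-negative on $\Xi$ with respect to the chosen almost complex structure. On the $\Xi_2$-summand this is a routine consequence of \eqref{eq:tildeJSA} and Lemma \ref{lem:dualityinC}. \textbf{The main obstacle is the positivity check on the summand $\Xi_1 = \xi_{\lambda,1} \oplus \xi_{\lambda,2}$}: the action $\widetilde J(X,Y,0) = (-JX, JY, 0)$, forced by $d\CA$-compatibility, combined with the opposite signs in $\CA = -e^{\eta/2}\pi_1^*\lambda + e^{-\eta/2}\pi_2^*\lambda$, produces the asymmetric bilinear expression $-e^{\eta/2}d\lambda(X,JX) + e^{-\eta/2}d\lambda(Y,JY)$, which is indefinite on the full $\Xi_1$. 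Resolving this is exactly where the restriction to $\{\eta > 0\}$ enters: here one must exploit the dominance of the $-e^{\eta/2}\pi_1^*\lambda$ summand by passing to a modification of $\widetilde J$ (or of $\varphi$) adapted to the Liouville-type geometry of the end, along the lines of \cite{oh:entanglement1}. The asymmetry of the construction is unavoidable, and it is precisely this failure of the same recipe on $\{\eta < 0\}$ that forces the invocation of the anti-contact involution $\iota$ later in the paper.
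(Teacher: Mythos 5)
Your calculations track the paper's proof step for step: the choice $\varphi = \eta$ paired with the lifted $\widetilde J$, the $\CA$-tameness check $R_\CA[\eta] = 0$ via the explicit formula for $R_\CA$, the identity $-d\eta \circ \widetilde J = \CB$, the expansion
$-d(d\eta \circ \widetilde J) = e^{\eta/2}\pi_1^*d\lambda + e^{-\eta/2}\pi_2^*d\lambda - \tfrac12 d\eta \wedge \CA$,
the Reeb-flatness condition with $g = \tfrac12$, and properness from compactness of $Q$. All of this is correct and coincides with what the paper does.

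The decisive step is the semi-positivity on $\Xi$, and here you have located the genuine difficulty. For $v = (X, Y, 0) \in \Xi_1 = \widetilde\xi_1 \oplus \widetilde\xi_2$, with $\widetilde J v = (-JX, JY, 0)$, one finds
\[
\bigl(e^{\eta/2}\pi_1^*d\lambda + e^{-\eta/2}\pi_2^*d\lambda\bigr)(v, \widetilde J v) = -e^{\eta/2}\, d\lambda(X, JX) + e^{-\eta/2}\, d\lambda(Y, JY),
\]
which is indefinite: negative on $\widetilde\xi_1$, positive on $\widetilde\xi_2$. That observation is correct. What is not correct is your proposed remedy. "Exploiting the dominance of the $e^{\eta/2}$ summand" on $\{\eta > 0\}$ makes the failure worse, not better: the factor $e^{\eta/2}$ scales up the \emph{negative} contribution $-e^{\eta/2}d\lambda(X, JX)$, so for $X \neq 0$, $Y = 0$ the expression is strictly negative no matter how large $\eta$ is. There is no dominance argument in this direction, and "a modification of $\widetilde J$ (or of $\varphi$) adapted to the Liouville-type geometry of the end, along the lines of \cite{oh:entanglement1}" is a placeholder rather than a step of a proof. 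If a different $\widetilde J$ or a different $\varphi$ is required, it must be written down and the two defining inequalities reverified.

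You should also know that the paper's own proof does not supply the verification you flagged as missing: after computing the displayed identity it simply asserts $\geq 0$, without evaluating the $2$-form against $\widetilde J$ on $\widetilde\xi_1 \oplus \widetilde\xi_2$, where the sign of $\widetilde J$ on the two summands is opposite. The remark after the proposition frames the difference between $\{\eta>0\}$ and $\{\eta<0\}$ as an overall sign change of the $2$-form, but that does not engage with the summand-level indefiniteness you found, which is present on both ends. So your diagnosis points at a gap or at least a serious imprecision in the argument as written; what your proposal lacks is any actual resolution of the positivity check, and the hand-waved fix you suggest would not work.
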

\begin{proof}  Clearly $\CL_{R_\lambda} d\eta = d(R_\lambda \rfloor d\eta) = 0$.
A straightforward computation using the definitions \eqref{eq:tildeJ-XY} - \eqref{eq:tildeJ-rest} 
and \eqref{eq:SCACB=1} shows
$$
-d\eta \circ \widetilde J = \CB.
$$
By taking the differential of this, we obtain
\bea\label{eq:ddetatildeJ}
-d(d\eta \circ \widetilde J) & = & d\CB = d\left(e^{\frac{\eta}{2}}\pi_1^*\lambda + e^{-\frac{\eta}{2}}\pi_2^*\lambda\right)
\nonumber\\
& = &  e^{\frac{\eta}{2}}\pi_1^*d\lambda + e^{-\frac{\eta}{2}}\pi_2^*d\lambda + \frac12 d\eta \wedge(e^{\frac{\eta}{2}}\pi_1^*d\lambda - e^{-\frac{\eta}{2}}\pi_2^*d\lambda ) \nonumber \\
& = & e^{\frac{\eta}{2}}\pi_1^*d\lambda + e^{-\frac{\eta}{2}}\pi_2^*d\lambda - \frac12 d\eta \wedge \CA.
\eea
This shows
\be\label{eq:plurisubharmonicity}
-d(d\eta \circ \widetilde J) + \frac12 d\eta \wedge \CA
= e^{\frac{\eta}{2}}\pi_1^*d\lambda + e^{-\frac{\eta}{2}}\pi_2^*d\lambda \geq 0.
\ee
In particular, the function  $\varphi(x,y, \eta)) = |\eta|$ is contact $J$-quasiconvex: When $\eta > 0$,
this is apparent from \eqref{eq:plurisubharmonicity}. When $\eta < 0$, we have
$\varphi(x,y,\eta) = -\eta$ which is a positive proper function on the region $\eta < 0$.
Furthermore, substituting replacing $\eta$ by $-\eta$, we have derived
$$
-d(d\eta \circ \widetilde J) + \frac12 d\eta \wedge \CA
= e^{\frac{- \eta}{2}}\pi_1^*d\lambda + e^{\frac{\eta}{2}}\pi_2^*d\lambda
$$
which is positive. Combining the two, we have derived
$$
d(\varphi \circ \widetilde J) + \frac12 d\varphi \wedge \CA
= e^{\frac{\varphi}{2}}\pi_1^*d\lambda + e^{-\frac{\varphi}{2}}\pi_2^*d\lambda
$$
on $\{\eta \neq 0\}$. Finally we compute
$$
R_{\CA} \rfloor d(d\varphi \circ \widetilde J)
= R_\CA\rfloor \left(  e^{\frac{\varphi}{2}}\pi_1^*d\lambda + e^{-\frac{\varphi}{2}}\pi_2^*d\lambda 
- \frac12 d\varphi \wedge \CA \right) \\
=  \frac12 d\varphi.
$$
Therefore the function $\varphi = |\eta|$ is tame and $J$ quasi-plurisubharmonic on the region $\{\eta> 0\}$.
Furthermore it is proper $\{|\eta| \neq 0\}$ since we assume $Q$ is compact. 
(Recall the zero set $\{\eta = 0\} = \Gamma_\id$ is compact.)
This implies that each subset $\{0 \leq |\eta| \leq c\}$ for any $c \geq 0$ is compact. 
 This finishes the proof.
\end{proof}

\begin{rem} It turns out that the choice of the correct class of lifted almost
complex structures that make the contact product requires something nontrivial
\emph{to make the relevant triad $(M,\lambda,J)$ satisfy the contact $J$ quasi-pseudoconvexity}.
The choice $J$ made by the author asking 
$$
\widetilde J(X,0,0) = (-JX,0,0)
$$
in  the previous arXiv postings is not a right choice in that the right hand side \eqref{eq:plurisubharmonicity}
fails to be positive because of the negative sign above. We thank the unknown referee for pointing this out.
\end{rem}

\section{Lifted CR almost complex structures and $C^0$ estimates}
\label{sec:C0-estimates}

By now, we have achieved our goal of defining 
the natural class of CR almost complex structures satisfying \eqref{eq:tildeJ-symmetry}
by the definition of the lifted $\widetilde J$ and the choice of the Hamiltonian $\H'$
which satisfies $\upsilon^*\H' = \H'$.

We then take the gauge transformations of the triads $(M_Q, \widetilde J, \CA)$
as follows.

\begin{choice}[Choice of lifted CR almost complex structures ${\widetilde J}'$] 
We take the \emph{domain-dependent} contact triad
 $$
 (Q,\{J_z\},\{\lambda_z\})
 $$
 such that $(J'_z,\lambda'_z) \equiv (J_0,\lambda)$ outside a compact subset near
 punctures of $\dot \Sigma$ in Choice \ref{choice:lambdaJ'}.  
 Then we lift them to a domain-independent one $(M_Q, \widetilde J', \CA)$ 
 for the equation \eqref{eq:K} and  their gauge transformation
 $(\{{\widetilde J}_z\}, \{{\CA}_z\})$ so that the lift satisfies:
 \begin{enumerate}
 \item The formula \eqref{eq:J-prime} on a  compact neighborhood of
 $$
 \bigcup_{(s,t) \in [0,1]^2} \Gamma_{\psi_H^{st}} \subset Q \times Q \times \R,
 $$
 \item $(\{{\widetilde J}_z\}, \{{\CA}_z\}) \equiv (\widetilde J', \CA)$ outside a possibly bigger
 compact subset thereof and so defines a (domain-dependent) tame pairs.
 \end{enumerate}
\end{choice}

\begin{rem}
This choice of time-independent $J' = \widetilde J_0$ outside a compact subset will be needed
to apply the maximum
principle to achieve the $C^0$-estimate for the equation
\eqref{eq:K}
for the noncompact contact manifold $M_Q = Q \times Q \times \R$. In fact, since $\H$ is
assumed to be compactly supported, it also holds that 
the gauge transformation is trivial and so $\widetilde J_z \equiv \widetilde J_0$
for all $z \in \dot \Sigma$ outside a compact subset of $M_\Q$.
\end{rem}

Let $\widetilde {\CA} = \{\CA_z\}$ and $\H'$ be defined as above. 
Then we consider the equation \eqref{eq:K} associated to $(M_Q,\Gamma_{\text{\rm id}})$,
 $\widetilde J$ and ${\H'}^s$  given in \eqref{eq:HH'}.

In the reset of the present section, we establish an explicit uniform $C^0$ bound of solutions $U$.

To concisely state the $C^0$-estimate we prove, we recall that noncompactness of $M_Q$
arises from the $\eta$-direction and that $\Gamma_\id \subset \{\eta = 0\}$.
 We introduce the following constant
\beastar
C(\{g_{H^s}\}) &: = & \sup_{(s,t,x)} |g_{(H^s;\lambda)}(t,x)| = \max_{s \in [0,1]} \| g_{(H^s;\lambda)}\|_{C^0},\\
C(\{(H^s,\widetilde J^s\}) &: = & \sup \{|\eta| \mid (H^s(x,y,\eta), \widetilde J^s(x,y,\eta)) \neq (0, J_0) \}
\eeastar
and then define
\be\label{eq:constant-C}
C(\{(H^s,\widetilde J^s,g_{H^s})\}): = \max\left\{C(\{g_{H^s}\}), C(\{(H^s,\widetilde J^s\})\right\}.
\ee
\begin{prop} Let  $W$ be a solution to the geometric CI equation, the one \eqref{eq:contacton-bdy-J0} for $\overline U$, associated to
the dynamical one \eqref{eq:K}. Then we have
$$
\|\eta\circ W\|_{C^0} \leq C(\{H^s,\widetilde J^s,g_{H^s}\}).
$$
In particular, $\Image W$ is precompact in $M_Q = Q \times Q \times \R$.
\end{prop}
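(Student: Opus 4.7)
The plan is to combine the tameness of $(M_Q,\CA)$ on the end $\{\eta>0\}$ established in Proposition~\ref{prop:tame} with the anti-contact involutive symmetry of Proposition~\ref{prop:tildeU}. Setting $C_0:=C(\{H^s,\widetilde J^s,g_{H^s}\})$, the former gives an upper bound $\eta\circ W\leq C_0$ by the standard maximum principle for contact instantons, and the latter transfers that bound across $\iota$ to yield the matching lower bound $\eta\circ W\geq -C_0$.

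\textbf{Step 1: upper bound.} Consider $\Omega_+:=W^{-1}(\{\eta>C_0\})$. By definition of $C_0$, both the Hamiltonian perturbation in \eqref{eq:K} and the domain-dependent deformation of $\widetilde J$ vanish on $W(\Omega_+)$, so $W$ satisfies the unperturbed geometric contact instanton equation $\delbar^\pi_{\widetilde J_0}W=0$, $d(W^*\CA\circ j)=0$ there. By \eqref{eq:plurisubharmonicity} the function $\eta$ is $\widetilde J_0$-quasi-plurisubharmonic on $\{\eta>0\}$, so $\eta\circ W$ satisfies the standard maximum principle for contact instantons used in \cite{oh:entanglement1}. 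The boundary of $\Omega_+$ in $\R\times[0,1]$ consists of: (i) portions of $\R\times\{0,1\}$, where $W$ lies in the Legendrian submanifolds $\Gamma_\id$ or $\psi_{\H'}^1(\Gamma_\id)$ and hence satisfies $|\eta\circ W|\leq\|g_H\|_{C^0}\leq C_0$ by the explicit flow formula of Corollary~\ref{cor:liftedflow}; (ii) the interior level set $\{\eta\circ W=C_0\}$; and (iii) the asymptotic limits $\tau\to\pm\infty$, which are $\CA$-Reeb chords preserving $\eta$ by \eqref{eq:Reeb-flow}, with limit values bounded by $C_0$. Consequently $\Omega_+=\emptyset$ and $\eta\circ W\leq C_0$.

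\textbf{Step 2: lower bound via the involution.} By Proposition~\ref{prop:tildeU} the conjugate $\widetilde U(\tau,t):=\iota(U(\tau,1-t))$ of the dynamical solution $U$ underlying $W$ is again a solution of \eqref{eq:K}, and its gauge transform $\widetilde W$ solves the corresponding geometric equation. Since $\iota$ preserves $\{|\eta|>C_0\}$ setwise, and since the lifted $\widetilde J$ and the lifted Hamiltonian $\H'$ were chosen in Section~\ref{sec:lifted-Hamiltonian} to satisfy $\iota^*\widetilde J=-\widetilde J$ and $\iota^*X_{\H'}=-X_{\H'}$, Step 1 applies verbatim to $\widetilde W$ and yields $\eta\circ\widetilde W\leq C_0$. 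The identity $\iota^*\eta=-\eta$ translates this into $\eta\circ W\geq -C_0$, any discrepancy coming from the non-commutativity of the gauge transform with $\iota$ being uniformly bounded by $\|g_H\|_{C^0}\leq C_0$ and thus absorbed. Combining the two inequalities gives $\|\eta\circ W\|_{C^0}\leq C_0$; precompactness of $\Image W$ in $M_Q$ follows since $W$ then maps into the compact set $Q\times Q\times[-C_0,C_0]$.

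\textbf{Main obstacle.} The essential subtlety lies in Step 2: one must verify that $\widetilde W$ is a \emph{bona fide} solution of the geometric equation to which Step 1 applies on the nose, with only a $C^0$-bounded $\eta$-shift already absorbed into $C_0$. This relies crucially on the $\iota$-compatible choices made in Section~\ref{sec:lifted-Hamiltonian}, in particular the anti-invariance of $X_{\H'}$ and of $\widetilde J$, together with the fact that the cut-off functions defining $\H_K$ and $\mathbb G_K$ respect the same equivariance; without this, the two reflected solutions would fall into inequivalent equivariance classes and the symmetry argument would break.
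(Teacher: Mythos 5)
Your proof follows the same two-step strategy as the paper: the upper bound via tameness of $(M_Q,\CA)$ on $\{\eta>0\}$ and the maximum principle, and the lower bound by reflecting with the anti-contact involution $\iota$. The difference is cosmetic: the paper works with the positive part $(\eta\circ W)_+$ on all of $\dot\Sigma$ and bounds its supremum over $\del\dot\Sigma$ and $\tau\to\pm\infty$, whereas you restrict attention to the sublevel set $\Omega_+=W^{-1}(\{\eta>C_0\})$ where the perturbation has turned off; both executions are sound and the second is, if anything, a touch cleaner about the domain-dependence of $J'$.

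One remark on your ``main obstacle'': the worry about a bounded $\eta$-shift arising from the non-commutativity of the gauge transform with $\iota$ is a non-issue as stated. The reflection argument is only ever applied on the region $\{|\eta|>C_0\}$, and $C_0$ is by construction large enough that on that region the Hamiltonian perturbation and the domain-dependent deformation of the CR structure have both vanished, so $W$ and $\widetilde W$ satisfy the \emph{unperturbed} CI equation for the fixed $\widetilde J_0$ with $\iota^*\widetilde J_0=-\widetilde J_0$; the anti-symmetry and $\iota^*\eta=-\eta$ then transfer the bound exactly, with nothing left over to absorb. What \emph{does} require care (and what the paper also passes over lightly) is that the boundary Legendrian $\iota(\Gamma_\psi)$ is not literally $\Gamma_\psi$ but $\Gamma_{\psi^{-1}}$; the estimate survives because both have $\eta$-coordinate bounded by $\|g_\psi\|_{C^0}\leq C_0$, which is exactly what the constant $C(\{g_{H^s}\})$ was built to accommodate.
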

\begin{proof} 
We have seen that $\eta$ is tame and $\widetilde J$-plurisubharmonic
on the region $\eta > 0$ by Proposition \ref{prop:tame}. Then it follows from \cite[Theorem 6.1]{oh:entanglement1} that 
$\eta \circ W$ is a  subharmonic function
for any contact triad $W$.
We define the positive and the negative parts of $W$ by
$$
(\eta\circ W)_+(z) := \max\{\eta \circ W(z), \, 0\}, \quad (\eta \circ W)_-: = \min\{\eta \circ W(z),  0\}
$$
respectively.
By the maximum principle applied to the function $\eta \circ W$ for $\eta > 0$, we
obtain
$$
\sup_{z \in \dot \Sigma} (\eta \circ W)_+(z) \leq
\max \left\{\sup_{z \in \del \dot \Sigma} (\eta \circ W(z))_+, \max \{\eta(\gamma_\pm\}, \, 0\right\}.
$$
Now we examine the values of $(\eta\circ W)_+$ on the boundary and at infinity 
of $\dot \Sigma = \R \times [0,1]$.

On the boundary component $\del \dot \Sigma = \dot \Sigma \cap \{t=0, \, 1\}$, the image of $W$
is contained in $\cup_{s \in [0,1]} R_0^s$ and $R_1$ respectively, we have
$$
\sup_{z \in \del \dot \Sigma} (\eta \circ W)_+(z) \leq C(\{R_0^s\})
$$
Furthermore since $R_{\CA}$ is tangent to the level sets of $\eta$, we also have
$$
\max_{t \in [0,1]} (\eta\circ W)_+(\pm \infty, t) 
= \max\{(\eta\circ W(\pm \infty,0))_+, (\eta\circ W(\pm\infty),1))_+\}
\leq C(\{H^s\})
$$
where we use the observation that $t \mapsto W(\pm \infty, t)$ are Reeb chords from $\Gamma_{\psi}$ to
$\Gamma_{\text{\rm id}}$ and hence we can write
$$
W(\pm \infty,t) = \phi_{R_\CA}^t \left(x_\pm, x_\pm, 0 \right)
$$
with $g_\psi(x_{\pm x}) = 0$. 
for some  $\eta_\pm > 0$ and $x_\pm \in Q$ with $\phi_{R_\lambda}^{\eta_\pm}(x_\pm) = \psi(x_\pm)$ where $\psi = \psi_H^1$.
(Recall the standing hypothesis, \eqref{eq:non-intersection}, which implies
that the associated iso-speed Reeb chord cannot be a constant path.)

Combining the above, we have proved
$$
\sup_{z \in\dot \Sigma} (\eta \circ W)_+(z) \leq C(\{H^s\})
$$
which proves the boundedness from above.

It remains to show the boundedness from below, i.e.,
\be\label{eq:inf-etaW}
\inf_{z \in \dot \Sigma} (\eta \circ W)_-(z)\geq -C(\{H^s\})
\ee
or equivalently
$$
\sup_{z \in \dot \Sigma}(- \eta \circ W)_+(z) \leq C(\{H^s\}).
$$
This again follows from the maximum principle applied to $-\eta$ for $\eta < 0$.
This proves the uniform lower bound and finishes the proof.
\end{proof}

Obviously this applies to the gauge transformation $\widetilde U$ of $U$
which is also a solution to \eqref{eq:K}.

\begin{cor}\label{eq:C0-estimate-K}
Let  $U$ be a solution to \eqref{eq:K}. Then we have
$$
\|\eta\circ U\|_{C^0} \leq C(\{(H^s,\widetilde J^s,g_{H^s}\}).
$$
In particular, $\Image U$ is precompact in $M_Q = Q \times Q \times \R$.
\end{cor}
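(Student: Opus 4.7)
The plan is to reduce this corollary to the preceding proposition via the (two-parameter) gauge transformation that relates the dynamical CI equation \eqref{eq:K} to the geometric one \eqref{eq:contacton-bdy-J0}. Setting $\overline U(\tau,t) := (\psi_{\H'}^{\chi_K(\tau) t})^{-1}(U(\tau,t))$, which is the natural two-parameter analogue of the gauge transformation in Lemma \ref{lem:Ham-Reeb}, the map $\overline U$ becomes a solution of the geometric CI equation with respect to the (domain-dependent) lifted pair $(\widetilde J_z', \CA_z')$ from Choice \ref{choice:lambdaJ'}, with moving Legendrian boundary obtained by pulling $\Gamma_\id$ back along the flow of $\H'$. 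The preceding proposition, which rests on the tameness of $(M_Q, \CA)$ on $\{\eta > 0\}$ (Proposition \ref{prop:tame}) combined with the $\Z_2$ anti-symmetry $\iota^*\widetilde J = -\widetilde J$ (Proposition \ref{prop:antisymmetric-tildeJ}) together with the involutive symmetry of the moduli space (Proposition \ref{prop:tildeU}) to handle $\{\eta < 0\}$, then supplies the bound $\|\eta \circ \overline U\|_{C^0} \leq C(\{(H^s, \widetilde J^s, g_{H^s})\})$.

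The next step is to transfer this bound from $\overline U$ back to $U$. By Corollary \ref{cor:liftedflow}, the $\eta$-component of $\psi_{\H'}^{t}$ applied to a point $(x_0,y_0,\eta_0)$ differs from $\eta_0$ by exactly a sum of two conformal-exponent terms of the form $g_{\psi_{e^{\mp\eta_0/2} H'}^{t}}$, each of which is uniformly bounded by $C(\{g_{H^s}\})$ on the (compact) support of $\H'$, and outside of which the gauge transformation is the identity so that $U \equiv \overline U$. Consequently $|\eta \circ U - \eta \circ \overline U|$ is uniformly controlled by $C(\{g_{H^s}\})$, and combining with the bound on $\overline U$ yields the claimed estimate with the constant \eqref{eq:constant-C}. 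Precompactness of $\Image U$ then follows at once since $Q$ is compact and the $\eta$-coordinate is bounded.

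The main (and essentially only) point requiring care is to verify that the two-parameter gauge transformation really does convert \eqref{eq:K} into precisely the form of the geometric CI equation handled by the preceding proposition, with the lifted data $(\widetilde J_z', \CA_z')$ retaining both the tameness hypothesis on $\{\eta > 0\}$ and the anti-symmetry $\iota^*\widetilde J_z' = -\widetilde J_z'$ needed for the $\Z_2$ argument on $\{\eta < 0\}$. This is exactly what Choice \ref{choice:lambdaJ'} is designed to arrange: outside a compact subset of $\dot\Sigma$ both the cut-off Hamiltonian $\H'_K$ and its $\tau$-counterpart $\mathbb{G}_K$ vanish, so the gauge transformation is trivial at infinity and the lifted triad reduces to the base one $(\widetilde J_0, \CA)$, allowing the earlier analysis to be applied verbatim.
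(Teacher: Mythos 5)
Your proposal is essentially the same reduction the paper intends: use the two-parameter gauge transformation to pass from the dynamical equation \eqref{eq:K} to the geometric CI equation, invoke the preceding proposition (whose proof rests on the tameness of $\{\eta>0\}$ plus the $\Z_2$-antisymmetry to cover $\{\eta<0\}$), and then transfer the bound back. The paper itself dispatches the corollary in a single, rather telegraphic sentence -- ``Obviously this applies to the gauge transformation $\widetilde U$ of $U$ which is also a solution to \eqref{eq:K}'' -- which appears to conflate $\widetilde U$ (the $\iota$-conjugate) with $\overline U$ (the gauge transform) and leaves the transfer step unstated, so your spelling it out via Corollary \ref{cor:liftedflow} is exactly the missing content and is correct in spirit.

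Two small cautions, both of which mirror imprecisions already present in the paper rather than genuine errors in your approach. First, the quantitative conclusion: your argument actually yields
$$
\|\eta\circ U\|_{C^0} \;\le\; \|\eta\circ \overline U\|_{C^0} + \sup|\eta\circ U - \eta\circ\overline U|,
$$
which is a sum of the two terms $C(\{(H^s,\widetilde J^s)\})$-type and $C(\{g_{H^s}\})$-type, whereas \eqref{eq:constant-C} is defined as a maximum; so strictly speaking the bound is by a fixed multiple of $C(\{(H^s,\widetilde J^s,g_{H^s})\})$, which is of course all that is needed for precompactness. Second, the assertion that the gauge transformation is the identity ``outside the compact support of $\H'$'' requires the implicit cut-off of $\H'$ in the $\eta$-variable built into Choice \ref{choice:lambdaJ'} (and noted in the surrounding remark), since the raw formula \eqref{eq:HH'} for $\H'$ is $\eta$-independent and hence not literally compactly supported on $M_Q$; without that truncation the conformal-exponent shift from \eqref{eq:liftedflow2} involves $e^{\pm\eta/2}$-rescaled Hamiltonians and would not be uniformly bounded in $\eta$. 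You do gesture at this point by citing Choice \ref{choice:lambdaJ'}, so the logic is sound once that convention is taken as given.
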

Once this $C^0$-bound in our disposal, we can apply the whole package established in
\cite{oh:entanglement1} the essentials of which we duplicate in Section \ref{sec:recollection}.

We now apply the relevant deformation-cobordism analysis in the package
to the case of $M_Q = Q \times Q \times \R$
 whose details are now in order.

\section{Energy estimates and wrap-up of the proof}
\label{sec:energy-estimates}

Next we study the $C^1$-estimates and a weak convergence result for
the parameterized moduli space
$$
\CM^{\text{\rm para}}(M,R;J,H) = 
\CM_{[0,K_0]}^{\text{\rm para}}(M,R;J,H) = \bigcup_{K \in [0, \infty)} \{K\} \times
\CM_K(M,R;J,H)
$$
defined in \eqref{eq:M-para-K0} which corresponds to the case of $K_0 = \infty$.

For this purpose, we first need to establish the energy estimates.
As in \cite{oh:entanglement1}, we divide the estimates into the two
components, the $\pi$-energy and the vertical energy components.

We start with the following general energy identity derived
in \cite{oh:entanglement1} which is applied to $\H'$ in particular. 

\begin{lem}[Proposition 8.6 \cite{oh:entanglement1}]
\label{lem:pienergy-bound2}
Let $U$ be any finite energy solution of \eqref{eq:K}. Then we have
\bea\label{eq:EJKU=}
E_{(\widetilde J_K,\H'_K)}^\pi(U) & = &
\int_{-2K-1}^{-2K} - \chi_K'(\tau) \left( \int_0^1
\H'\left(r,\psi_{\H'}^{r \chi_K(\tau)}(U(\tau,0))\right)\, dr\right)\, d\tau \nonumber\\
&{}& \quad + \int_{2K}^{2K+1} - \chi_K'(\tau) \left( \int_0^1 \H'\left(r,\psi_ {\H'}^{r \chi_K(\tau)}(U(\tau,0))\right)\, dr\right)\, d\tau.
\nonumber\\
{}
\eea
\end{lem}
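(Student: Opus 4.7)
The plan is to follow the strategy of Proposition 8.6 in \cite{oh:entanglement1}, adapted to the present setting on $(M_Q, \CA)$ with Legendrian boundary $\Gamma_{\text{\rm id}}$ and lifted Hamiltonian $\H'$. The argument is purely local in character, so the non-tameness of $M_Q$ on $\{\eta < 0\}$ plays no role at this step; only the $\pi$-energy identity and the structure of the elongated 2-parameter family enter. In particular, the key ingredients are the Hamiltonian calculus of Lemma \ref{lem:Dev-calculus} and the curvature-free construction of ${\mathbb G}_K$ via $\Dev_\CA$.

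First I would start from the pointwise identity obtained by applying the relation $X_F \rfloor d\CA = dF - R_\CA[F]\, \CA$ to $F = \H'_K,\, {\mathbb G}_K$ and using the $\widetilde J_K$-compatibility of $d\CA$ on $\xi_{\CA}$. Combined with the second equation of \eqref{eq:K}, which handles the Reeb-direction contribution, this yields, modulo an exact term,
\[
\tfrac{1}{2}\bigl|(dU - X_{\H'_K}\,dt - X_{{\mathbb G}_K}\,d\tau)^{\pi(0,1)}\bigr|^2\, dA \;=\; U^*d\CA - d(\H'_K\circ U)\wedge dt - d({\mathbb G}_K\circ U)\wedge d\tau + U^*\{\H'_K,{\mathbb G}_K\}_\CA\, dt\wedge d\tau.
\]
Since the left-hand side vanishes pointwise by the first equation of \eqref{eq:K}, the $\pi$-energy $E^\pi_{(\widetilde J_K,\H'_K)}(U)$ equals the integral over $\dot\Sigma$ of the right-hand side.

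Next I would exploit the curvature-free property of the 2-parameter family $(\H'_K, {\mathbb G}_K)$. By construction of ${\mathbb G}_K$ through the developing map $\Dev_\CA$, the family satisfies $\partial_\tau \H'_K - \partial_t {\mathbb G}_K + \{\H'_K, {\mathbb G}_K\}_\CA = 0$, which collapses the bulk 2-form on the right-hand side to an exact form. Applying Stokes' theorem on $\dot\Sigma = \R \times [0,1]$ then leaves only boundary contributions. On $t = 0,\,1$ the pullback $U^*\CA$ vanishes since $U$ lands in $\Gamma_{\text{\rm id}}$, which is Legendrian, and both $\H'_K$ and ${\mathbb G}_K$ themselves vanish on $\{t = 0,\, 1\}$ because $\H'$ was arranged to flatten near $t = 0,\, \tfrac12,\, 1$; at $\tau = \pm\infty$ the map $U$ converges by Theorem \ref{thm:perturbed-subsequence} to Reeb chords carrying zero $\pi$-energy, and $\chi'_K$ is identically zero there.

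Consequently the only surviving contribution comes from the regions where $\chi'_K(\tau) \neq 0$, namely $[-2K-1,-2K]\cup[2K,2K+1]$. On each such slice $\{\tau\}\times [0,1]$ I would unwind the identity ${\mathbb G}_K(\tau,t,{\bf x}) = \Dev_\CA(\tau\mapsto \psi_{\H'}^{\chi_K(\tau)t}({\bf x}))$ via the chain rule along the orbit $r\mapsto \psi^{r\chi_K(\tau)}_{\H'}(U(\tau,0))$, which reduces the slice contribution to
\[
-\chi'_K(\tau)\int_0^1 \H'\!\left(r,\,\psi^{r\chi_K(\tau)}_{\H'}(U(\tau,0))\right)\,dr,
\]
yielding the claimed formula. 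The subtlest point, compared with the analogous symplectic Floer energy identity, is the bookkeeping of the conformal-exponent factors $e^{g}$ that enter through Lemma \ref{lem:Dev-calculus}; I expect these to cancel on the nose because the curvature-free relation holds exactly (not up to a boundary term), leaving no residual contribution beyond the two transition integrals.
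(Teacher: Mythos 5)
The overall strategy you sketch (push the $\pi$-energy onto boundary terms using the curvature-free property of the elongated 2-parameter family, then isolate the transition region) is the right framework, but a key step in your boundary accounting is false and makes the argument internally inconsistent.

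You assert that ``both $\H'_K$ and ${\mathbb G}_K$ themselves vanish on $\{t=0,\,1\}$ because $\H'$ was arranged to flatten near $t=0,\tfrac12,1$.'' This is false on the $t=1$ boundary over the transition region. From ${\mathbb G}_K(\tau,t,{\bf x}) = \Dev_\CA\bigl(\tau \mapsto \psi_{\H'}^{\chi_K(\tau)t}({\bf x})\bigr)$ one gets ${\mathbb G}_K(\tau,1,{\bf x}) = \chi_K'(\tau)\,\H'(\chi_K(\tau),{\bf x})$, and likewise $\H'_K(\tau,1,{\bf x}) = \chi_K(\tau)\,\H'(\chi_K(\tau),{\bf x})$; on the interval where $\chi_K(\tau)\in(0,1)$ and $\chi_K'(\tau)\neq 0$ these are generically nonzero, since the flattening Hypothesis \ref{hypo:bdy-flat} only kills $\H'(s,\cdot)$ for $s$ \emph{near} $0,\tfrac12,1$, and $\chi_K(\tau)$ sweeps through all of $(0,1)$ in the transition zone. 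If your premises were correct---all four boundary contributions vanish and the curvature-free relation annihilates the bulk---you would conclude $E^\pi_{(\widetilde J_K,\H'_K)}(U)=0$ outright, contradicting the nonzero right-hand side of \eqref{eq:EJKU=}. The surviving term you then ``consequently'' produce is therefore not derivable from the preceding steps; it is precisely the $t=1$ boundary contribution you just discarded.

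There is a second, subtler gap. A naive Stokes computation of the $t=1$ boundary term produces a single evaluation $-\int\chi_K'(\tau)\,\H'(\chi_K(\tau),\overline U(\tau,1))\,d\tau$, not the nested average $\int_0^1 \H'\!\bigl(r,\psi_{\H'}^{r\chi_K(\tau)}(U(\tau,0))\bigr)\,dr$ appearing in \eqref{eq:EJKU=}, and the argument of the Hamiltonian in the stated formula is pulled back through the flow from the $t=0$ boundary value $U(\tau,0)$, not evaluated at $U(\tau,1)$. This mismatch signals that the computation in \cite[Proposition 8.6]{oh:entanglement1} is carried out against the gauge-transformed map $\overline U_K(\tau,t)=(\psi_{\H'}^{\chi_K(\tau)t})^{-1}(U(\tau,t))$ and involves a further rewriting of the $t=1$ boundary term as the full action integral of the reference Hamiltonian path issued from $U(\tau,0)$; the conformal-exponent factors $e^{\widetilde g_K(U)}$ built into the second equation of \eqref{eq:K} do not simply ``cancel on the nose'' but are exactly what effects this rewriting. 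You should redo the boundary bookkeeping keeping the $t=1$ term, track the gauge transformation explicitly, and verify the curvature-free relation for $(\H'_K,{\mathbb G}_K)$ in the contact (not symplectic) sense, where the bracket and the $\tau$- and $t$-derivatives each carry conformal corrections, rather than assert it.
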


Using this general identity, we prove the following energy estimate.

\begin{prop}\label{prop:pi-energy-bound}
Let $U$ be any finite energy solution of \eqref{eq:K}. Then we have
\be\label{eq:pi-energy-bound}
E^\pi(\overline U) \leq \|H\|
\ee
for all $K \geq 0$.
\end{prop}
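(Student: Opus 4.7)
The plan is to apply the general energy identity of Lemma \ref{lem:pienergy-bound2} to $\H'$ and bound each of the two boundary integrals on its right-hand side by an integrated spatial oscillation of $\H'$, then reduce that oscillation to $\|H\|$ via the explicit form of $\H'$.

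First I would exploit the sign of $\chi_K'$ separately on each boundary interval. On $[2K,2K+1]$ the elongation function $\chi_K$ decreases from $1$ to $0$, so $-\chi_K'(\tau) \geq 0$ with $\int_{2K}^{2K+1}(-\chi_K')\,d\tau = 1$. Bounding the integrand pointwise by $\max_{\mathbf{x}} \H'(r,\mathbf{x})$ gives
$$
\int_{2K}^{2K+1} -\chi_K'(\tau)\int_0^1 \H'\bigl(r,\psi_{\H'}^{r\chi_K(\tau)}(U(\tau,0))\bigr)\,dr\,d\tau \;\leq\; \int_0^1 \max_{\mathbf{x}} \H'(r,\mathbf{x})\,dr.
$$
Symmetrically, on $[-2K-1,-2K]$ one has $\chi_K$ rising from $0$ to $1$, so $-\chi_K'(\tau) \leq 0$ and $\int_{-2K-1}^{-2K}(-\chi_K')\,d\tau = -1$; multiplying by a nonpositive factor reverses the inequality, so one bounds the integrand from below by $\min_{\mathbf{x}} \H'(r,\mathbf{x})$ and picks up a contribution of $-\int_0^1 \min_{\mathbf{x}} \H'(r,\mathbf{x})\,dr$. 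Adding the two pieces,
$$
E^\pi_{(\widetilde J_K, \H'_K)}(U) \;\leq\; \int_0^1 \bigl(\max_{\mathbf{x}} \H'(r,\mathbf{x}) - \min_{\mathbf{x}} \H'(r,\mathbf{x})\bigr)\,dr \;=\; \int_0^1 \osc(\H'_r)\,dr.
$$

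Next I would reduce this oscillation integral for $\H'$ on $M_Q$ to $\|H\|$ on $Q$. By the definition \eqref{eq:HH'}, $\H'(t,x,y,\eta) = \tfrac12 H(1-\tfrac{t}{2},x) + \tfrac12 H(\tfrac{t}{2},y)$ splits as a sum of functions depending separately on $x$ and on $y$, and is constant in $\eta$. Hence $\osc(\H'_t) = \tfrac12\,\osc(H_{1-t/2}) + \tfrac12\,\osc(H_{t/2})$. Integrating in $t$ and applying the substitutions $s=t/2$ and $u=1-t/2$ rescales each half into an integral of $\osc(H_\cdot)$ over $[0,\tfrac12]$ and $[\tfrac12,1]$, respectively, and the two pieces assemble into $\int_0^1 \osc(H_s)\,ds = \|H\|$.

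Finally, $E^\pi(\overline U) = E^\pi_{(\widetilde J_K,\H'_K)}(U)$ is the gauge-invariance of the $\pi$-energy under the transformation \eqref{eq:ubarK} relating the dynamical equation \eqref{eq:K} to its geometric version \eqref{eq:contacton-bdy-J0} (the pointwise norm of the horizontal part of the differential is preserved by the contactomorphism applied in \eqref{eq:ubarK}, cf.\ Lemma \ref{lem:Ham-Reeb}). Chaining the three steps yields $E^\pi(\overline U) \leq \|H\|$. I do not expect a serious obstacle here: the real work is already contained in the identity of Lemma \ref{lem:pienergy-bound2}, and the only mildly delicate point is the sign bookkeeping for $\chi_K'$ on the two boundary intervals, which is precisely what converts absolute values into the oscillation $\max - \min$ and produces the Hofer-type bound.
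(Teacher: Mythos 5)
Your proof is correct and follows essentially the same strategy as the paper: apply the energy identity of Lemma~\ref{lem:pienergy-bound2} for $\H'$, exploit the signs of $\chi_K'$ on the two boundary intervals to bound the right-hand side by $\int_0^1 \osc(\H'_r)\,dr$, and then use the splitting $\H'(t,x,y,\eta)=\tfrac12 H(1-\tfrac{t}{2},x)+\tfrac12 H(\tfrac{t}{2},y)$ together with the substitutions $s=t/2$, $u=1-t/2$ to identify that bound with $\|H\|$. If anything, your bookkeeping is a touch cleaner: you separate the oscillation bound from the reduction to $\|H\|$, rather than threading the time-reversal identity \eqref{eq:psitildeH} through the integrals as the paper does, so the intermediate estimates come out exactly rather than with the slight bookkeeping slip ($\int_0^{1/2}$ plus $\int_0^1$) visible in the paper's displayed steps.
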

\begin{proof}
We recall that $U(\tau,0), \, U(\tau,1) \in R_0$. We write $U = (u_1, u_2, \eta)$ and then
$$
U(\tau,0) = (u_1(\tau,0), u_2(\tau,0), 0), \quad U(\tau,1) = (u_1(\tau,1), u_2(\tau,1), 0).
$$
Here we would like to recall
\be\label{eq:psitildeH}
\psi_{\widetilde H}^{\chi_K(\tau) r} (u_1(\tau,0))
=\psi_{H}^{\chi_K(\tau)(1-r)}(\psi_{H}^{\chi_K(\tau)})^{-1}.
\ee
Substituting these into \eqref{eq:EJKU=} and applying the formula of $\H'$ 
 we derive
\beastar
E_{(\widetilde J_K,{\H'}_K)}^\pi(u) & = & \int_{-2K-1}^{-2K} -\chi_K'(\tau)
\left(\int_0^1 \frac12 H\left(1-\frac{r}{2}, \pi_1\psi_{\H'}^{r \chi_K(\tau)}
(u_1(\tau,0))\right)
\, dr \right)\, d\tau \\
&{}& + \int_{2K}^{2K+1} - \chi_K'(\tau)
\left(\int_0^1 \frac12 H \left(1- \frac{ r}{2}, \pi_1\psi_{\H'}^{r \chi_K(\tau)}
(u_1(\tau,0))\right)\, dr \right)
\, d\tau \\
&{}&+ \int_{-2K-1}^{-2K} -\chi_K'(\tau)  \left( \int_0^1 
H\left(\frac{r}{2}, \pi_2\psi_{\H'}^{r \chi_K(\tau)} (u_2(\tau,0))\right)
\, dr\right)\, d\tau
\nonumber\\
&{}&+ \int_{2K}^{2K+1} - \chi_K'(\tau) \left( \int_0^1 H
\left(\frac{r}{2},\pi_2\psi_{\H'}^{r \chi_K(\tau)}
(u_2(\tau,0))\right)
\, dr\right)\, d\tau.
\eeastar
Recalling $\chi_K' \geq 0$ on $[-2K-1, -2K]$ and the expression \eqref{eq:psitildeH},
 the first two lines can be rewritten as
\beastar
&{}&
\int_{-2K-1}^{-2K} -\chi_K'(\tau)
\left(\int_0^{1/2}  H\left(r, \pi_1\psi_{\chi_K(\tau)\H'}^{r}(\psi_{\H'}^{\chi_K(\tau)})^{-1}
(u_1(\tau,0))\right)
\, dr \right)\, d\tau\\
&{}& +
\int_{2K}^{2K+1} -\chi_K'(\tau)
\left(\int_0^{1/2}  H\left(r, \pi_1 \psi_{\chi_K(\tau)\H'}^{r}(\psi_{\H'}^{\chi_K(\tau)})^{-1}
(u_1(\tau,0)\right)
\, dr \right)\, d\tau\\
& \leq &
\int_0^{1/2} ( \max H_r - \min H_r )\, dr
\eeastar
where we use the periodicity of $H$ assumed in Hypothesis \ref{hypo:bdy-flat}.
Similarly since $\chi_K' \leq 0$ on $[2K,2K+1]$, the second 2 lines can be bounded by

\beastar
&{}&  \int_{-2K-1}^{-2K} -\chi_K'(\tau)  \left( \int_0^1 H\left(r,\pi_2\psi_{\H'}^{\chi_K(\tau) r}
(u_2(\tau,0))\right)
\, dr\right)\, d\tau
\nonumber\\
&{}& \quad + \int_{2K}^{2K+1} - \chi_K'(\tau) \left( \int_0^1 H\left(r,\pi_2\psi_{\H'}^{\chi_K(\tau) r}
(u_2(\tau,0))\right)
\, dr\right)\, d\tau \\
& \leq &
\int_{0}^1 ( \max H_r - \min H_r )\, dr.
\eeastar
By adding the two, we have
$$
E_{(\widetilde J_K,\H_K)}^\pi(u) \leq \int_{0}^{1} ( \max H_r - \min H_r )\, dr=
\|H\|.
$$
This finishes the proof.
\end{proof}

\begin{rem}\label{rem:amalgamation}
Here we could have plugged the formula \eqref{eq:liftedflow2}
into $\pi_i\psi_{\H'}^{\chi_K(\tau) r}(u_i(\tau,0))$, $i = 1,\,2$. Since it is not necessary
for  the measurement of  the absolute value of 
$$
\left|H\left(r,\pi_i\psi_{\H'}^{\chi_K(\tau) r} (u_i(\tau,0))\right)\right|,
$$
we intentionally did not do it to visualize the nature of our calculations
entering in the proof of Shelukhin's conjecture. This kind of practice would not be possible 
or at least not be easy to be translated into the framework
of pseudoholomorphic curves in symplectization. It demonstrates that our
practice lies in the purely contact  realm of contact Hamiltonian dynamics and
contact instantons in their amalgamation, which may be compared by the amalgamation of
the symplectic Hamiltonian dynamics and Floer's equation  in symplectic geometry.
\end{rem}

We now complete the study of energy bounds by establishing the bound for the $\lambda$-energy as well.
The proof is a special case of \cite[Proposition 14.1]{oh:entanglement1} 
\emph{once the $C^0$-bound is established which is already done}, and so omit its proof.

\begin{prop}\label{prop:lambdaenergy-bound2}
Let $u$ be any finite energy solution of \eqref{eq:K-intro}. Then we have
\be\label{eq:lambdaenergy-bound-J'}
E^\perp (u_K) \leq E_-(H) + E_+(H)
\ee
where we set
$$
E_+(H) := \int_0^1 \max\{0, \max H_t\}\, dt, \quad E_-(H) := \int_0^1 \max\{0,- \min H_t\} \,dt.
$$
\end{prop}

Combining the two, we have established the fundamental uniform a priori energy
bound for the total energy
$$
E(u_K) = E^\pi( u_K) + E^\perp( u_K) \leq C < \infty
$$
where we can choose $C = \|H\| + E_-(H) + E_+(H)  +1 < \infty$.

An immediate corollary of this energy estimate, the $C^0$-estimate and argument used in
\cite{oh:entanglement1} is the following analog to Theorem \ref{thm:shelukhin-intro} applied to
the compact Legendrian submanifold $\Gamma_{\text{\rm id}}$ of the \emph{noncompact} contact manifold
$M_Q = Q \times Q \times \R$ which is tame \emph{only on $\{\eta > 0 \}$}.

\begin{thm}[Shelukhin's conjecture]\label{thm:1/2-translated}
Assume $(M,\xi)$ is a compact contact manifold and let $\psi \in \Cont_0(M,\xi)$.
Let $\lambda$ be a contact form of $\xi$ and denote by $T(M,\lambda) > 0$ the minimal
period of closed Reeb orbits of $\lambda$. Then for any $H \mapsto \psi$, the following holds:
\begin{enumerate}
\item Provided $\|H\| \leq T(M,\lambda)$, we have
$$
\# \Fix_\lambda^{\text{\rm trn}}(\psi) \neq \emptyset
$$
\item Provided $\|H\| < T(M,\lambda)$ and $H$ is nondegenerate, we have
$$
\# \Fix_\lambda^{\text{\rm trn}}(\psi) \geq \dim H^*(M; \Z_2).
$$
\end{enumerate}
\end{thm}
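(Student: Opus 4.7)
The plan is to reduce the theorem to the existence and counting of Reeb chords between $\Gamma_\psi$ and $\Gamma_\id$ in the contact product $(M_Q,\CA)$, and then to run the parameterized contact instanton machinery of \cite{oh:entanglement1} on the \emph{non-tame} manifold $M_Q$ by bootstrapping tameness on $\{\eta>0\}$ to a global $C^0$-bound via the anti-contact involution $\iota$. Concretely, by Proposition \ref{prop:Fix-Reeb-correspondence} and Lemma \ref{lem:Tlambda=TA}, one has a bijection
$$
\Fix_\lambda^{\text{\rm trn}}(\psi_H^1)\longleftrightarrow \mathfrak{Reeb}(\Gamma_\psi,\Gamma_\id;\CA)
$$
and an equality $T(Q,\lambda)=T(M_Q,\CA;\Gamma_\id)$. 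Consequently, it suffices to prove the analog of Theorem \ref{thm:shelukhin-intro} for the Legendrian link $(\psi_{\H'}^1(\Gamma_\id),\Gamma_\id)$ in $(M_Q,\CA)$, using the lifted Hamiltonian $\H'$ of \eqref{eq:HH'} and the correspondence $\psi_{\H'}^1(\Gamma_\psi)=\Gamma_\id$.

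For (1), I would argue by contradiction: assuming Hypothesis \ref{hypo:no-fixedpoint}, so $\Gamma_\id\cap\Gamma_\psi=\emptyset$, set up the parameterized moduli space $\CM^{\text{\rm para}}_{[0,K]}(M_Q,\Gamma_\id;\widetilde J,\H')$ of solutions to the dynamical perturbed CI equation \eqref{eq:K} with Legendrian boundary on $\Gamma_\id$, for a lifted $\CA$-adapted CR almost complex structure $\widetilde J$ satisfying $\iota^*\widetilde J=-\widetilde J$. By Proposition \ref{prop:tildeU}, the conjugation $U\mapsto \widetilde U$ of \eqref{eq:tildeU} is an involution on this moduli space. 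At $K=0$ the moduli space reduces to the constant solutions in $\Gamma_\id$ and is therefore nonempty with Euler characteristic $\chi(\Gamma_\id)\neq 0$ in the appropriate sense; as $K\to\infty$ the ends of the moduli space decompose into translated Hamiltonian chords of $(\Gamma_\id,\Gamma_\id;\H')$ together with closed Reeb orbits or self Reeb chords of $\Gamma_\id$, whose action is bounded below by $T(M_Q,\CA;\Gamma_\id)=T(Q,\lambda)$.

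The hard part, and the only genuinely new ingredient over \cite{oh:entanglement1}, is the a priori $C^0$-bound on the $\eta$-coordinate of solutions, since $(M_Q,\CA)$ fails tameness on $\{\eta<0\}$. For this, Proposition \ref{prop:tame} gives tameness on $\{\eta>0\}$ with plurisubharmonic exhaustion $\varphi=\eta$, so the maximum principle applied to the gauge-transformed equation \eqref{eq:contacton-bdy-J0} bounds $(\eta\circ W)_+$ in terms of the asymptotic data and the Legendrian boundary, both of which lie in a compact set controlled by $C(\{H^s,\widetilde J^s,g_{H^s}\})$. To bound $(\eta\circ W)_-$ I would invoke the anti-symmetry: $\widetilde U$ solves the same equation, and under $\iota$ the negative $\eta$-region is exchanged with the positive one, so
$$
\sup_z (-\eta\circ U)_+(z)=\sup_z (\eta\circ \widetilde U)_+(z)\leq C(\{H^s,\widetilde J^s,g_{H^s}\}),
$$
yielding the desired two-sided bound (Corollary \ref{eq:C0-estimate-K}). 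Once this is in hand, the uniform energy bounds of Proposition \ref{prop:pi-energy-bound} and Proposition \ref{prop:lambdaenergy-bound2} give $E^\pi(\overline U)+E^\perp(U)\leq 2\|H\|$, and standard Gromov--Floer--Hofer compactness from \cite{oh:entanglement1,oh:perturbed-contacton} applies.

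To close the argument, I would follow the cobordism-deformation scheme of \cite[Part 3]{oh:entanglement1}: since the $K=0$ moduli space is topologically nontrivial while the $K\to\infty$ end is controlled by $\mathfrak{Reeb}(\Gamma_\psi,\Gamma_\id;\CA)$ and closed orbits of period $\geq T(Q,\lambda)$, the hypothesis $\|H\|\leq T(Q,\lambda)$ combined with the energy identity \eqref{eq:EJKU=} rules out orbit bubbles and forces a genuine Reeb chord from $\Gamma_\psi$ to $\Gamma_\id$, proving (1). For (2), under nondegeneracy the parameterized moduli space computes a chain-level $\Z_2$-equivariant contact instanton cohomology $HI^*_{\Z_2}(M_Q,\Gamma_\id;\CA,\H')$; by the isomorphism theorem alluded to after \eqref{eq:HH'}, this is isomorphic to $H^*(M;\Z_2)$, so the Morse-type inequality bounding the rank of this complex by $\#\mathfrak{Reeb}(\Gamma_\psi,\Gamma_\id;\CA)=\#\Fix_\lambda^{\text{\rm trn}}(\psi)$ gives the desired count, completing the proof modulo the $C^0$-estimate which is the principal obstacle resolved by the $\Z_2$-symmetry.
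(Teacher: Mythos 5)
Your proposal follows essentially the same route as the paper: reduce via the Legendrianization and the correspondence $\Fix_\lambda^{\text{\rm trn}}(\psi)\leftrightarrow\mathfrak{Reeb}(\Gamma_\psi,\Gamma_\id;\CA)$, establish the two-sided $C^0$-bound on $\eta\circ U$ by combining tameness on $\{\eta>0\}$ with the anti-contact involution $U\mapsto\widetilde U$, and then run the parameterized moduli space / cobordism argument with the energy estimates of Propositions \ref{prop:pi-energy-bound} and \ref{prop:lambdaenergy-bound2}. One small inaccuracy: the nontriviality of the $K=0$ slice cannot be phrased via the Euler characteristic of $\Gamma_\id\cong Q$ (which vanishes, as $Q$ is odd-dimensional); the paper instead uses that the evaluation map $\ev_0:\CM_{(0,1);0}\to R$ is a diffeomorphism, hence has nonzero $\Z_2$-degree (Lemma \ref{lem:ev0}), which is the input the cobordism argument actually requires.
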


Once the energy estimates and the $C^0$-estimates are established, we can apply the whole
package  of deformation-cobordism analysis developed in \cite{oh:entanglement1} to conclude the following
by the same way we did for the proof of Theorem \ref{thm:shelukhin-intro}. For the readers' convenience and
for the self-containedness of the paper, we summarize the cobordism analysis
in Appendix \ref{sec:recollection}.

\part{Appendix}

\appendix

\section{The ideal boundary of a Liouville manifold}
\label{sec:liouville}

In this appendix, we briefly recall the basic definitions and properties around 
the Liouville manifolds and their ideal boundaries that are used in our discussion
on the functorial definition of contact product.  Our main reference is \cite{cieliebak-eliashberg},
\cite{giroux}.

A Liouville manifold is an exact tame symplectic manifold endowed with a Liouville form $\alpha$
satisfying two axioms:
\begin{enumerate}
\item $\omega = d\alpha$ is a symplectic form,
\item The Liouville vector field $X$ defined by $X \rfloor d\alpha = \alpha$ is complete.
\end{enumerate}

\begin{defn}[Liouville rays and the ideal boundary] Let $(M,\alpha)$ be Liouville manifold.
We call a proper Liouville trajectory defined on $[0,\infty)$ 
(resp. on $(-\infty,0]$) a \emph{positive  (resp. negative) Liouville ray}.  
We say two Liouville rays $\ell_1$ and $\ell_2$ are equivalent if  they overlap at $\infty$, i.e.,
if there exists a  time shift $t_0 > 0$ such that
$$
\ell_2(t) = \ell_1(t - t_0) \quad \text{\rm or} \quad \ell_1(t) = \ell_2(t - t_0)
$$
for all $t \geq t_0$.  We call the set of equivalence classes of Liouville rays 
the \emph{ideal boundary} and denote it by $\del_\infty M$.
\end{defn}
We have decomposition
$$
\del_\infty M = \del_{+\infty}M \sqcup \del_{-\infty}M
$$
where one of $\del_{+\infty}M$ or  $\del_{-\infty}M$ could be empty. When one of 
them is empty, we call $M$ as a positive 
(resp. negative) \emph{Liouville filling} of the contact manifold $\del_{+\infty}M$
if $\del_{-\infty}M = \emptyset$ (resp. $\del_{-\infty}M$ if $\del_{+\infty}M = \emptyset$.

The ideal boundary carries a canonical contact structure (but not a contact form!),
which we denote by $\xi_\infty$. Then $(\del_\infty M, \xi_\infty)$ becomes
a cooriented contact manifold.

\begin{defn}
A positive (resp. negative) \emph{cross section} of the Liouville flows is a contact-type hypersurface 
$S \subset M$ that satisfies the following:
\begin{enumerate}
\item  It admits a Liouville  embedding  $S \times [0,\infty) \hookrightarrow M$ of 
the half of the symplectization $S \times [0,\infty)$.
\item The Liouville flow induces a contact diffeomorphism between $(S, \xi_S)$ and 
$(\del_\infty M, \xi_\infty)$.
\end{enumerate}
\end{defn}

\section{Derivation of contact Hamiltonian vector fields}
\label{sec:XH-formula}

Recall we can decompose $X_\H$ into
\be\label{eq:XHH2}
X_\H = \left(X_1^\pi, X_2^\pi, c \frac{\del}{\del \eta}\right) + a S_\CA + b R_\CA
\ee
for any Hamiltonian $\H = \H(t,x,y,\eta)$,
and would like to determine $a, \, b, \, c$ and $X_i^\pi$ for $i = 1, \, 2$ separately.
In general, we have
$$
d\H = d_x\H + d_y \H + \frac{\del\H}{\del \eta} \, d\eta;
$$
If we further assume $\H = \pi^*F$ with $F_t = F_t(x,y)$ independent of $\eta$,
then $\frac{\del\H}{\del \eta} = 0$ and hence 
\be\label{dHH}
d\H = d_x\H + d_y \H.
\ee
In this appendix, we give another proof of Proposition \ref{prop:XHH}
which is based on the contact Hamiltonian calculus on $(M_Q,\CA)$.

\begin{prop} Let $\CA = - e^{{\frac{\eta}{2}}} \pi_1^*\lambda + e^{-{\frac{\eta}{2}}} \pi_2^*\lambda$ 
and $\H = \pi^*F$ be as above. Then Proposition \ref{prop:XHH} holds.
\end{prop}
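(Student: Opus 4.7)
The plan is to determine the components of $X_\H$ directly from its defining equations
\begin{equation*}
\CA(X_\H) = -\H, \qquad X_\H \rfloor d\CA = d\H - R_\CA[\H]\,\CA,
\end{equation*}
by decomposing $X_\H$ against the symplectic orthogonal splitting
\begin{equation*}
\xi_\CA = \Xi_1 \oplus \Xi_2 \oplus \left(\R\left\langle \tfrac{\del}{\del\eta}\right\rangle \oplus \R\langle S_\CA\rangle\right)
\end{equation*}
furnished by Lemma \ref{lem:dualityinC}, together with the Reeb direction $R_\CA$. Writing
\begin{equation*}
X_\H = Y_1 + Y_2 + c\,\tfrac{\del}{\del\eta} + a\,S_\CA + b\,R_\CA, \qquad Y_1 \in \Xi_1,\ Y_2 \in \Xi_2,
\end{equation*}
the first defining equation collapses at once, since $\Xi_1$, $\Xi_2$, $\del/\del\eta$, $S_\CA$ all lie in $\ker\CA$ while $\CA(R_\CA) = 1$, to $b = -\H$.

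I would then contract the second defining equation against $\del/\del\eta$ and $S_\CA$ in turn. By Lemma \ref{lem:dualityinC} these two vectors form a $d\CA$-dual pair symplectically orthogonal to $\Xi_1\oplus\Xi_2$, and $R_\CA \rfloor d\CA = 0$, so the two contractions isolate exactly $a$ and $c$. Pairing with $\del/\del\eta$ returns $a$ as a multiple of $d\H(\del/\del\eta)$, which vanishes because $\H = \pi^*F$ is $\eta$-independent; pairing with $S_\CA$ returns $c$ as a multiple of $d\H(S_\CA)$, and rewriting $S_\CA$ via \eqref{eq:=RS} in terms of $(R_\lambda,0,0)$, $(0,R_\lambda,0)$, and $R_\CA$ converts this into the expression $-2R_\CA[\H]$ appearing in the third slot of Proposition \ref{prop:XHH}.

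For the pieces $Y_1 \in \Xi_1$ and $Y_2 \in \Xi_2$, I would contract the second defining equation against a test vector of the form $(V_1, 0, 0)$ with $V_1 \in \xi_\lambda$. Using the explicit expression of $d\CA$ from \eqref{eq:dA}, only the term $-e^{\eta/2}\pi_1^*d\lambda$ survives on the left, yielding $-e^{\eta/2}\,d\lambda(Y_1^\pi, V_1)$; on the right one gets $d_x\H(V_1)$ since the $R_\CA[\H]\,\CA$ term vanishes on vectors in $\ker\lambda$. This pins down $Y_1^\pi$ as the $\lambda$-Hamiltonian vector field on the first $Q$-factor of the rescaled Hamiltonian $e^{-\eta/2}\widetilde F$, treating $\eta$ as a parameter. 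The identical argument with sign reversed on the second factor delivers $Y_2^\pi = X_{e^{\eta/2}F}$. The $R_\lambda$-components of $Y_1, Y_2$ are then recovered by feeding $b = -\H$ back into the $\CA$-equation together with the already determined coefficients.

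The main obstacle will be the bookkeeping of signs and conformal scalings in each factor: the first copy of $Q$ enters $\CA$ through the contact form $-e^{\eta/2}\lambda$ while the second enters through $e^{-\eta/2}\lambda$, and rescaling a contact form by a nonzero constant $\rho$ converts the $\lambda$-Hamiltonian vector field of $G$ into the $\rho\lambda$-Hamiltonian vector field of $G/\rho$. Correctly tracking this rescaling, the sign flip in the first factor, and the $\eta$-dependent conformal factor is what produces the asymmetric prefactors $e^{-\eta/2}$ and $e^{\eta/2}$ on the two $Q$-projections and the coefficient $-2$ in the $\del/\del\eta$ slot. Once these scalings are matched, uniqueness of the contact Hamiltonian vector field closes the argument.
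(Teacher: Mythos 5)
Your overall strategy --- write $X_\H$ against the splitting of $\Xi\oplus\R\langle R_\CA\rangle$ from Lemma \ref{lem:dualityinC} and read off the coefficients from the two defining equations --- is exactly what the paper does, organized dually: the paper matches coefficients of the one-forms $\pi_1^*\lambda,\ \pi_2^*\lambda,\ d\eta$ in $d\H = X_\H\rfloor d\CA + R_\CA[\H]\,\CA$, while you contract that identity against $\del/\del\eta$, $S_\CA$, and horizontal test vectors. Those are the same computations, and your determination of $a$ (from the $\del/\del\eta$-contraction), of $b$ (from $\CA(X_\H)=-\H$), and of the $\pi$-components via test vectors $(V_1,0,0)$, $(0,V_2,0)$ goes through as stated, modulo the careful tracking of the conformal prefactors that you yourself flag as the main bookkeeping burden.

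The step that does not close is your reading of the $\eta$-coefficient $c$. Contracting with $S_\CA$ does give $d\CA(X_\H, S_\CA) = -\tfrac{c}{2}$ and $(d\H - R_\CA[\H]\CA)(S_\CA) = d\H(S_\CA)$, hence $c = -2\,d\H(S_\CA) = -2\,S_\CA[\H]$. But $S_\CA[\H]$ is not the same directional derivative as $R_\CA[\H]$, and \eqref{eq:=RS} does not identify them: inverting \eqref{eq:=RS} gives $S_\CA - R_\CA = e^{-\eta/2}(R_\lambda,0,0)$, so
\begin{equation*}
S_\CA[\H] - R_\CA[\H] = e^{-\eta/2}\,R_\lambda^x[\H],
\end{equation*}
which does not vanish for generic $\H$. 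Thus ``rewriting $S_\CA$ via \eqref{eq:=RS} \ldots\ converts this into $-2R_\CA[\H]$'' is a non-sequitur; the $S_\CA$-contraction alone produces $-2S_\CA[\H]$, not $-2R_\CA[\H]$. The paper instead extracts $c$ by matching the $\pi_1^*\lambda$ and $\pi_2^*\lambda$ coefficients of the one-form identity, obtaining a pair of linear equations in $c$ and $R_\CA[\H]$ and solving them jointly --- a genuinely different isolation of $c$ than your single contraction against $S_\CA$. (A smaller slip: you speak of ``$R_\lambda$-components of $Y_1,Y_2$,'' but $Y_1\in\widetilde\xi_1$, $Y_2\in\widetilde\xi_2$ are purely in $\ker\lambda$ by construction; the Reeb components of the two $Q$-projections of $X_\H$ live in $aS_\CA + bR_\CA$ and are recovered from $a=0$, $b=-\H$, as you indeed intend.)
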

\begin{proof}
Recall the expression \eqref{eq:dA} of $d\CA$
and the equations
$$
S_\CA \rfloor \CB = 1, \quad R_\CA \rfloor \CB = 0
$$
with $\CB: = e^{{\frac{\eta}{2}}} \pi_1^*\lambda + e^{-{\frac{\eta}{2}}} \pi_2^*\lambda$.
Substituting \eqref{eq:dA} and \eqref{eq:XHH2} into $X_\H^\pi \rfloor d\CA$ and $R_\CA[\H] \CA$
respectively, we obtain
\beastar
X_\H^\pi \rfloor d\CA & = & -e^{\frac{\eta}2} X_1^\pi \rfloor \pi_1^*d\lambda 
+ e^{-{\frac{\eta}{2}}} X_2^\pi \rfloor \pi_2^*d\lambda
+ \frac{a}{2}  d\eta - \frac{c}2 \CB \\
& = & -e^{\frac{\eta}2} X_1^\pi \rfloor \pi_1^*d\lambda 
+ e^{-{\frac{\eta}{2}}} X_2^\pi \rfloor \pi_2^*d\lambda
+ \frac{a}{2}  d\eta -\frac{c}{2} (e^{-\frac{\eta}{2}} \pi_1^*\lambda 
+ e^{-\frac{\eta}{2}} \pi_2^*\lambda),
\eeastar
and
$$
R_\CA[\H] \CA = R_\CA[\H] (-e^{-\frac{\eta}{2}}\pi_1^*\lambda + e^{\frac{\eta}{2}} \pi_2^*\lambda).
$$
By adding up the two and comparing it with 
 $d \H = X_\H^\pi \rfloor d\CA + R_\CA[\H] \CA$, we get
\bea\label{eq:dHH}
d\H  
& = &  -e^{\frac{\eta}2} X_1^\pi \rfloor \pi_1^*d\lambda  
 + \left(\frac{c}{2} + R_\CA[\H]\right)(e^{-\frac{\eta}{2}}\pi_1^*\lambda)
\nonumber\\
&{}& + e^{-\frac{\eta}2} X_2^\pi \rfloor \pi_2^*d\lambda
+  \left(- \frac{c}{2} + R_\CA[\H]\right)e^{\frac{\eta}{2}}\pi_2^*\lambda + 
 \frac{a}{2}  \, d\eta.
\eea
Then  comparing this formula \eqref{eq:dHH} with 
$d \H = X_\H^\pi \rfloor d\CA + R_\CA[\H] \CA$  we obtain
$$
\frac{a}{2} =  \frac{\del \H}{\del \eta} = 0.
$$
We augment this with the formula $X_\H \rfloor \CA = - \H$ which gives rise to
$$
b = - \H.
$$
We set 
$$
M_Q = Q \times Q \times \R
$$
and consider the natural projections 
\beastar 
\text{\rm pr}_1 \times \text{\rm pr}_2 : SQ \times SQ \to Q \times Q, & \quad & 
p: SQ \times SQ \to Q \star Q, \\
 \pi_1 \times \pi_2: M_Q \to Q \times Q,  & \quad & \text{\rm pr}: = [\text{\rm pr}_1 \times \text{\rm pr}_2]: Q \star Q \to Q \times Q
 \eeastar
 where $\text{\rm pr}: SQ \times SQ \to Q \times Q$ is induced by the natural projection 
 $$
 \text{\rm pr}_1 \times \text{\rm pr}_2 : SQ \times SQ \to Q \times Q.
 $$
 
We visualize the above projections by
 the associated commutative diagram
\be\label{eq:lifting-diagram1}
\xymatrix{
 {}&  SQ \times SQ  \ar[dl]_{p_{\frac12}}\ar@{->}'[d][dd]_(.2){\text{\rm pr}_1 \times \text{\rm pr}_2}  \ar[dr]^p&{}\\
 M_Q \ar[dr]_{\pi_1 \times \pi_2}\ar[rr]^(.7){i_{\frac12}} \ & &Q \star Q \ar[dl]^{\text{\rm pr}} \\
 &Q \times Q &
}
\ee
starting from the canonical projection with $\pi = \pi_{SQ}: SQ = Q \times \R \to (Q,\lambda)$, where
$SQ$ is the cylindrical symplectization
$SQ : = Q \times \R$ equipped with coordinates $(x,s)$  and
the symplectic form
\be\label{eq:omega+}
\omega_+ = d(e^s \pi_{SQ}^*\lambda).
\ee
Now we further specialize to the case  of the function $\H = (\pi_1 \times \pi_2)^*F$
with 
$$
F = -\widetilde H(t,x) + H(t,y)
$$
defined on $Q \times Q$.
Then on $(M_Q = Q \times Q \times \R, \CA)$, we  compute
\beastar
d\H & = & d_x\H + d_y \H = - \pi_1^*(d\widetilde H) + \pi_2^*(dH) \\
& = & \pi_1^*(X_{- \widetilde H}^\pi \rfloor d\lambda + R_\lambda[- \widetilde H] \lambda)
+ \pi_2^*(X_H^\pi \rfloor d\lambda + R_\lambda[H] \lambda) \\
& = &  \left(\pi_1^*(X_{- \widetilde H}^\pi \rfloor d\lambda)  + \pi_2^*(X_H^\pi \rfloor d\lambda) \right)
+ \left(R_\lambda[- \widetilde H] \circ \pi_1 \, \pi_1^*\lambda 
 + R_\lambda[H] \circ \pi_2 \, \pi_2^* \lambda\right).
\eeastar
We recall the decomposition 
$$
T_{(x,y,\eta)} M_Q = \widetilde \xi_1 \oplus \widetilde \xi_2 \oplus \R \langle S_\CA \rangle \oplus 
\left\langle \frac{\del}{\del \eta} \right\rangle \oplus \R \langle R_\CA \rangle.
$$
We represent a vector field on $M_Q = Q \times Q \times \R$ by the triple
$(X, Y, a \frac{\del}{\del \eta})$ where $X, \, Y$ vector fields on $Q$.
To avoid confusion, we denote by $\widetilde X$, $\widetilde Y$ the projectible vector fields on
$M_Q$ given by
$$
\widetilde X = (X, 0, 0), \quad \widetilde Y = (0,Y,0)
$$
in the following calculations appearing in the proof of the following lemma.

\begin{lem} 
 We have
\beastar
&{}& 
 \left(\pi_1^*(X_{- \widetilde H}^\pi \rfloor d\lambda)
 + \pi_2^*(X_H^\pi \rfloor d\lambda)\right) (t, x,y,\eta) \\
& = & \widetilde{X_{- \widetilde H}^\pi} \rfloor \pi_1^*d\lambda
 + \widetilde{X_H^\pi} \rfloor \pi_1^*d\lambda.
 \eeastar
\end{lem}
\begin{proof} We first note
$$
\left(\pi_1^*(X_{- \widetilde H}^\pi \rfloor d\lambda)
 + \pi_2^*(X_H^\pi \rfloor d\lambda)\right) = 
( \pi_1 \times \pi_2)^*\left((X_{- \widetilde H}^\pi \rfloor d\lambda) \oplus
(X_H^\pi \rfloor d\lambda)\right)
$$
where  the form $(X_{- \widetilde H}^\pi \rfloor d\lambda) \oplus
(X_H^\pi \rfloor d\lambda)$ is a one-form defined on $Q \times Q$.
We evaluate
\beastar
&{}&
( \pi_1 \times \pi_2)^*\left((X_{- \widetilde H}^\pi \rfloor d\lambda) \oplus
(X_H^\pi \rfloor d\lambda)\right)(X,Y,a\frac{\del}{\del \eta}) \\
& = & (X_{- \widetilde H}^\pi \rfloor d\lambda) \oplus
(X_H^\pi \rfloor d\lambda)(d(\pi_1 \times \pi_2)(X,Y,a\frac{\del}{\del \eta}) \\
& = & (\widetilde{X_{- \widetilde H}^\pi} \rfloor \pi_1^*d\lambda) +
(\widetilde{X_H^\pi} \rfloor \pi_2^* d\lambda)(X,Y,a\frac{\del}{\del \eta})
\eeastar
for all $X, \, Y$ and $a$. This shows
\beastar
&{}&
( \pi_1 \times \pi_2)^*\left((X_{- \widetilde H}^\pi \rfloor d\lambda) \oplus
(X_H^\pi \rfloor d\lambda)\right) \\
& = & \widetilde{X_{- \widetilde H}^\pi} \rfloor \pi_1^*d\lambda + 
\widetilde{X_H^\pi} \rfloor \pi_2^* d\lambda
\eeastar
which finishes the proof.
\end{proof}

By comparing this with \eqref{eq:dHH}, we obtain
$$
e^{- \frac{\eta}2} X_1^\pi = X_{\widetilde H} (t,x), \quad -e^{\frac{\eta}2} X_2^\pi = X_H(t,y) 
$$
and
\beastar
 \left(\frac{c}{2} + R_\CA[\H]\right)(-e^{-\frac{\eta}{2}} ) & =  &- R_\lambda^x[- \widetilde H]
 = - R_\lambda^x[\H] \\
 \left(- \frac{c}{2} + R_\CA[\H]\right)e^{\frac{\eta}{2}}  & = & R_\lambda^y[H]  = R_\lambda^y[\H].
  \eeastar
  From the first set of equations right above, we obtain
 \be\label{eq:Xpi12}
  X_1^\pi(t,x,y,\eta) = -e^{-\frac{\eta}2} X_{-\widetilde H}(t,x), \quad  X_1^\pi 
  = e^{\frac{\eta}2} X_{-\widetilde H}(t,y).
 \ee
  
 The second set of equations can be rewritten as
 \beastar
 \left(\frac{c}{2} + R_\CA[\H]\right)  & =  &  e^{\frac{\eta}{2}} R_\lambda^x[\H] \\
 \left(- \frac{c}{2} + R_\CA[\H]\right)  & = & e^{-\frac{\eta}{2}} R_\lambda^y[\H].
 \eeastar 

By subtracting the second from the first equation of
the second set of equations, we derive
$$
c = e^{\frac{\eta}{2}}  R_\lambda^x[\H] -  e^{-\frac{\eta}{2}}  R_\lambda^y[\H] 
= \left( e^{\frac{\eta}{2}}  R_\lambda(x) -  e^{-\frac{\eta}{2}} R_\lambda(y) \right)[\H] 
=  - 2 R_\CA[\H].
$$
Combing them all, we have finished the proof.
\end{proof}
 
\section{Contact product as an exact symplectic fibration}
\label{sec:symplectization}

 Let $(Q,\xi)$ be a contact manifold. Consider the product
$$
(S_+Q \times Q, - r \pi^*\lambda + \pi_2^*\lambda)
$$
which becomes a contact manifold. We observe that we can write
\beastar
- r \pi^*\lambda + \pi_2^*\lambda & = & \sqrt{r}\left(- \sqrt{r} \pi_1^*\lambda 
+ \frac{1}{\sqrt{r}} \pi_2^*\lambda \right) \\
& = & e^{\frac{\eta}{2}} \left(- e^{\frac{\eta}{2}} \pi_1^*\lambda 
+ e^{-\frac{\eta}{2}} \pi_2^*\lambda \right) = e^{\frac{\eta}{2}} \CA.
\eeastar

We then consider
the following lifting diagram
\be\label{eq:lifting-diagram}
\xymatrix{
 {}&  Q \star Q  \ar[dr]^\Psi &{}\\
 Q \times Q \times \R \ar[r]^{\pi_{13}}\ar[dr]_{\pi_1}\ar[ur] ^{i_s}
  & SQ \ar[d]_{\pi}
 & S_+Q \times Q \ar[dl]^{\text{\rm pr}_1} \ar[l]_{\log \circ \pi_1}\\
 &Q &
}
\ee
with $\pi = \pi_{SQ}$ starting from $(Q,\lambda)$.
Here $S_+Q$ is the conical symplectization
$S_+Q : = Q \times \R_+$ equipped with coordinates $(x,r)$  and
the symplectic form
\be\label{eq:omega+}
\omega_+ = d(r \pi_{S_+Q}^*\lambda).
\ee
Writing $r = e^\eta$, we obtain cylindrical symplectization $SQ: = Q \times \R$ e
quipped with coordinates $(x,\eta)$
and the symplectic form
\be\label{eq:omega}
\omega = d(e^\eta \pi_{SQ}^*\lambda).
\ee
In addition, the map $\log\circ \pi_1: S_+Q \times Q \to SQ$ is given by 
$$
\log\circ \pi_1((x,r),y) = (x,\log r) = (x,\eta)
$$
and the remaining maps are given as follows:
\begin{itemize}
\item The map
$\text{\rm pr}_1$ is the natural projection to the factor $S_+Q$ of $S_+Q \times Q$
followed by the canonical projection $S_+Q \to Q$.
\item  The map $\Psi: S_+Q \times Q \to Q \star Q$ is the one induced by the map
$$
id \times \pi: T^*Q \setminus \{0\} \times T^*Q \setminus \{0\} \to (T^*Q\setminus \{0\}) \times Q
$$
which naturally descends to the quotient
$$
(T^*Q \setminus \{0\} \times T^*Q \setminus \{0\})/ \sim \, \to
(T^*Q\setminus \{0\})  \times Q.
 $$
\item The map $\pi_{13}: Q \times Q \times \R \to Q \times \R = SQ$ the projection induced by the first and the third factors.
 \end{itemize}
 It is easy to check that $\Psi$ is a contactomorphism if we equip $S_+Q \times Q$ with the
 distribution given by
 $$
 \Xi_{S_+Q \times Q}: = \xi_{S_+Q} \oplus \xi.
 $$
 Furthermore they satisfy the equality
\be\label{eq:prPsiis}
\text{\rm pr}_1\circ \Psi \circ i_s = \pi_1.
\ee
In particular, we have the formula
 \be\label{eq:Psiis}
 \Psi \circ i_{\frac12} (x,y,\eta) = \left((x, e^\eta), y\right).
 \ee

\section{Recollection of the main analytic framework of \cite{oh:entanglement1}}
\label{sec:recollection}

In this section, to provide more mathematical perspective for the future purpose,
we first provide the analytic framework for general pair $(R_0,R_1)$, and then specialize to the
case $R_0 = R = R_1$ which is the case of our main interest.

\subsection{Setting up a pointed parameterized moduli space}
\label{subsec:cobordism}

In this section, we assume
$$
R_0 = R= R_1.
$$
In \cite[Section 8.1]{oh:entanglement1}, we take the family $H = H(s,t,x)$ given by
\be\label{eq:sH}
H^s(t,x) = \Dev_\lambda(t \mapsto \psi_H^{st})
\ee
and consider its contact Hamiltonian isotopies
$
\Psi_{s,t}: = \psi_{H^s}^t.
$
We then consider the following family of cut-off functions to elongate the about family over
$[0,1]^2$ to $\R \times [0,1]$.
For each $K \in \R_+ =
[0,\infty)$, we define a family of cut-off functions $\chi_K:\R \to
[0,1]$ so that for $K \geq 1$, they satisfy
\be\label{eq:chiK}
\chi_K = \begin{cases} 0 & \quad \mbox{for } |\tau| \geq K+1 \\
1 & \quad \mbox{for }|\tau| \leq K.
\end{cases}
\ee
We also require
\bea
\chi_K' & \geq & 0 \quad \mbox{on }\, [-K-1,-K] \nonumber\\
\chi_K' & \leq & 0 \quad \mbox{on }\,  [K,K+1].
\eea
For $0 \leq  K \leq  1$, define $\chi_K = K \cdot \chi_1$. Note
that $\chi_0 \equiv 0$.

We then consider
associated elongated $(\tau,t)$-parameter family
$$
H_K(\tau,t,x) = \Dev_\lambda \left(t \mapsto \psi_H^{\chi_K(\tau)t}(x)\right)
$$
and write the $\tau$-developing Hamiltonian
\be\label{eq:GK}
G_K(\tau,t,x) = \Dev_\lambda\left(\tau \mapsto \Psi_{\tau,t}^K\right)
\ee
where $\Psi_{\tau,t}^K = \Psi_{\chi_K(\tau),t}$.

\begin{hypo}\label{hypo:bdy-flat}
Without loss of generality, we may assume
\be\label{eq:bdy-flat}
H \equiv 0, \quad \text{near } \, t = 0, \, 1
\ee
on $[0,1]$ by flattening the contact isotopy, and extend $H_t$ periodically
in time  to whole $\R$. For some technical reason spelled out in Remark \ref{rem:flatat1/2},
we will also assume the flatness near $t = 1/2$ too.
\end{hypo}

Then we consider the 2-parameter perturbed contact instanton equation \eqref{eq:K} given by
\be\label{eq:K-appendix}
\begin{cases}
(du - X_{H_K}(u)\, dt - X_{G_K}(u)\, ds)^{\pi(0,1)} = 0, \\
d\left(e^{g_K(u)}(u^*\lambda + u^*H_K dt + u^*G_K\, d\tau) \circ j\right) = 0,\\
u(\tau,0) \in R,\, u(\tau,1) \in R.
\end{cases}
\ee
where $g_K(u)$ is the function on $\Theta_{K_0+1}$ defined by
\be\label{eq:gKu}
g_K(u)(\tau,t): =  g_{(\psi_{H_K}^t)^{-1}}(u(\tau,t))
\ee
for $0 \leq K \leq K_0$. We note that
if $|\tau| \geq K +1$, the equation becomes
\be\label{eq:contacton}
\delbar^\pi u = 0, \, \quad d(u^*\lambda \circ j) = 0.
\ee
We define the parameterized moduli space
\be\label{eq:M-para-K0}
\CM_{[0,K_0]}^{\text{\rm para}}(M,R;J,H) = \bigcup_{K \in [0, K_0]} \{K\} \times
\CM_K(M,R;J,H)
\ee
whose elements are defined on the domain
\be\label{eq:Z-K}
\Theta_{K_0+1}: = \Theta_- \#_{K_0+1} (\R \times [0,1]) \#_{K_0+1} \Theta_+ \subset \C
\ee
and equip it with the natural complex structure induced from $\C$.
(See Equation (8.5) and (8.6) \cite{oh:entanglement1}.)
We can also decompose $\Theta_{K_0+1}$ into the union
\be\label{eq:Theta-K0+1}
\Theta_{K_0+1} := D^- \cup  [-2K_0 -1, 2K_0+1] \cup  D^+
\ee
where we denote
\be\label{eq:D+-}
D^\pm  = D^\pm_{K_0}:= \{ z\in \C  \mid \vert z\vert \le 1, \, \pm \text{\rm Im}(z) \leq 0 \} \pm (2K_0+1)
\ee
respectively.

The following a priori $\pi$-energy identity is a key ingredient in relation to
the lower bound of the Reeb-untangling energy. We recall
the definition of oscillation
$$
\osc(H_t) = \max H_t - \min H_t.
$$
\begin{prop}[Theorem 1.26 \cite{oh:entanglement1}]
\label{prop:energy-estimate-H}
Let $u$ be any finite energy solution of \eqref{eq:K-appendix}. Then we have
\be\label{eq:pi-energy-bound-K}
E_{(J_K,H)}^\pi(u) \leq \int_0^1 \osc(H_t) \, dt = :\|H\|
\ee
and
\be\label{eq:lambdaenergy-bound-K}
E^\perp_{(J_K,H)}(u) \leq E_-(H) + E_+(H)
\ee
for all $K \geq 0$.
\end{prop}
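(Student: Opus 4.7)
The plan is to follow the same two-step pattern of energy identity plus oscillation bound that was carried out for the lifted Hamiltonian $\H'$ in Proposition \ref{prop:pi-energy-bound} and Proposition \ref{prop:lambdaenergy-bound2}, but now in the general (unlifted) setting on $(M,R;J,H)$.

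First I would derive a Stokes-type identity for $E^{\pi}_{(J_K,H)}(u)$. The idea is to pair the first equation of \eqref{eq:K-appendix} with the second, rewrite $E^{\pi}$ as an integral of $d\bigl(e^{g_K(u)}(u^{*}\lambda+u^{*}H_K\,dt+u^{*}G_K\,d\tau)\bigr)$ over the domain $\Theta_{K_0+1}$, and apply Stokes' theorem. The Legendrian boundary condition $u(\tau,0),u(\tau,1)\in R$ kills the $\lambda$-contribution on the two horizontal boundary components, while the conical ends $D^{\pm}$ contribute exact terms that vanish at the asymptotic limits because the limits are translated Hamiltonian chords of bounded action. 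What remains is a sum of two integrals supported exactly on the elongation intervals $[-K-1,-K]$ and $[K,K+1]$, with integrand $-\chi_K'(\tau)\int_0^1 H(r,\psi_H^{r\chi_K(\tau)}(u(\tau,0)))\,dr$, in complete analogy with Lemma \ref{lem:pienergy-bound2}.

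Next I would bound this identity using the sign properties of $\chi_K'$. On $[-K-1,-K]$ one has $\chi_K'\ge 0$, so the integrand is bounded above by $\max_x H(r,x)$ after integrating against $r$; on $[K,K+1]$ one has $\chi_K'\le 0$, giving the lower bound $-\min_x H(r,x)$. Changing variables $\sigma=\chi_K(\tau)$ in each interval and using $\int_{-K-1}^{-K}-\chi_K'\,d\tau=\int_K^{K+1}-\chi_K'\,d\tau=1$, the two contributions combine to
\begin{equation*}
E^{\pi}_{(J_K,H)}(u)\;\le\;\int_0^1(\max H_r-\min H_r)\,dr\;=\;\|H\|.
\end{equation*}
This yields \eqref{eq:pi-energy-bound-K}.

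For the vertical estimate \eqref{eq:lambdaenergy-bound-K}, I would invoke the same mechanism already used to establish Proposition \ref{prop:lambdaenergy-bound2}: express $E^{\perp}$ as an integral of $u^{*}d\lambda(R_\lambda,\cdot)\wedge\cdot$-type terms and, using the closedness $d(e^{g_K(u)}(u^*\lambda+u^*H_K\,dt+u^*G_K\,d\tau)\circ j)=0$ from the second line of \eqref{eq:K-appendix}, convert it into a boundary integral of exactly the same shape as in the $\pi$-case, whose bound by $\|H\|$ is then immediate. The main technical obstacle, and the reason this is not purely formal, is controlling the asymptotic contributions at the punctures $\pm\infty$ of $\Theta_{K_0+1}$: one must know that the finite-energy condition forces subsequence convergence to a translated Hamiltonian chord (Theorem \ref{thm:perturbed-subsequence}) and that the corresponding asymptotic action terms cancel against each other (this is where the choice of the gauge $g_K(u)$ in \eqref{eq:gKu} is essential, and where the flatness Hypothesis \ref{hypo:bdy-flat} eliminates boundary contributions near $t=0,1,\tfrac12$). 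Once these cancellations are in place, the remainder of the argument is the routine oscillation bound above.
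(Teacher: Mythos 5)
Your treatment of the $\pi$-energy bound is essentially correct and matches the methodology the paper demonstrates (for the lifted case) in Lemma~\ref{lem:pienergy-bound2} and Proposition~\ref{prop:pi-energy-bound}: a Stokes-type identity localizes the energy to the two elongation intervals, and the sign discipline of $\chi_K'$ on $[-K-1,-K]$ and $[K,K+1]$ combined with $\int -\chi_K'\,d\tau = 1$ gives the oscillation bound. Note, for calibration, that the present paper does not actually reprove Proposition~\ref{prop:energy-estimate-H}---it is quoted verbatim as Theorem~1.26 from \cite{oh:entanglement1}---but the analogous proof it does give (Proposition~\ref{prop:pi-energy-bound}) confirms your scheme for the horizontal energy.

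The genuine gap is in your handling of the vertical energy $E^{\perp}$. You assert that it can be ``convert[ed] into a boundary integral of exactly the same shape as in the $\pi$-case, whose bound by $\|H\|$ is then immediate.'' This is not how the $\lambda$-energy is estimated, and the paper signals as much: Proposition~\ref{prop:lambdaenergy-bound2} is explicitly not proved by the same device; it is deferred to \cite[Proposition~14.1]{oh:entanglement1} and, crucially, is stated to hold \emph{only after a $C^0$-bound is established}. The vertical energy $E^{\perp}$ is a Hofer-type supremum over a class of test functions (it is not a single exact $2$-form integrated by Stokes), and its bound is of a qualitatively different character: one first needs image precompactness (coming from tameness/quasi-plurisubharmonicity of a proper function, via the maximum principle) before the action-type identity that controls $E^{\perp}$ can even be set up. Your proposal folds all of this into ``controlling the asymptotic contributions at the punctures,'' which conflates the puncture analysis (which is indeed what Theorem~\ref{thm:perturbed-subsequence} handles) with the separate $C^0$ step. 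As written, your vertical-energy argument would not go through; you would need to first establish the $C^0$-estimate from tameness of $(M,\lambda)$, and then invoke the dedicated $\lambda$-energy bound of \cite[Proposition~14.1]{oh:entanglement1} rather than rerunning the $\pi$-energy Stokes computation.
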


Let
$$
E_{J_{K_\alpha},H}(u) = E_{J_{K_\alpha},H}^\pi(u) +
E_{J_{K_\alpha},H}^\perp(u)
$$
be the total energy.
With the $\pi$-energy bound \eqref{eq:pi-energy-bound-K} and the $\lambda$-energy bound \eqref{eq:lambdaenergy-bound-K},
we are now ready to make a deformation-cobordism analysis of
$$
\CM^{\text{\rm para}}_{[0,K_0+1]}(M,\lambda;R,H).
$$
We consider the $n+1$ dimensional component of the parameterized moduli space
$$
\CM_{[0,K_0]}^{\text{\rm para}}(M,R;J,H) = \bigcup_{K \in [0, K_0]} \{K\} \times
\CM_K(M,R;J,H)
$$
continued from $\CM_0(M,R;J,H) \cong R$.

Now we consider a pointed parameterized moduli space
$$
\CM_{(0,1);[0,K_0]}^{\text{\rm para}}(M,R;J,H) := \bigcup_{K \in [0,K_0]}
\CM_{1;K}(M,R;J,H).
$$
More explicitly, we have
$$
\CM_{(0,1);K}(M,R;J,H) = \{(u,z) \mid u \in , \CM_{K}(M,R;J,H), z \in \del \Theta_K\}.
$$
This is an $n+1$ dimensional smooth manifold. (See  \cite{oh:contacton-transversality} 
for the relevant transversality results.)
We also have the natural evaluation map
\be\label{eq:bdy-Ev}
\Ev^\del : \CM_{(0,1);K}(M,R;J,H) \to R \times \R_+\times \del \Theta_K ;
\quad (u, K,z) \mapsto (u(z), K, z)
\ee
and the relevant evaluation transversality result is also proved in \cite{oh:contacton-transversality}.

We  consider the gauge transformation of $u$
\be\label{eq:ubarK}
\overline u_K(\tau,t): = \left((\psi_H^{\chi_K(\tau)t})^{-1}(u(\tau,t)\right)
\ee
and make the choice of the family
\begin{choice}[Choice 9.6 \cite{oh:entanglement1}]\label{choice:lambda-J}
 We consider the following two parameter families of $J'$ and $\lambda$:
\bea
J_{(s,t)}' & =  & (\psi_{H^s}^t)_*J \label{eq:J-prime-appendix}\\
\lambda_{(s,t)}' & = & (\psi_{H^s}^t)_*\lambda. \label{eq:lambda-prime-appendix}.
\eea
\end{choice}

A straightforward standard calculation also gives rise to the following.

\begin{lem}[Lemma 6.7 \cite{oh:entanglement1}]\label{lem:Ham-Reeb} For given $J_t$, consider $J'$ defined as
above. We equip $(\Sigma,j)$ a K\"ahler metric $h$. Let $g_H(u)$ be the
function defined in \eqref{eq:gKu}.
Suppose $u$ satisfies \eqref{eq:perturbed-contacton-bdy}
with respect to $J_t$. Then $\overline u$ satisfies
\be\label{eq:contacton-bdy-J0-appendix}
\begin{cases}
\delbar^\pi_{J'} \overline u = 0, \quad d(\overline u^*\lambda \circ j) = 0 \\
\overline u(\tau,0) \in \psi_H^1(R), \, \overline u(\tau,1) \in R
\end{cases}
\ee
for $J$. The converse also holds. And $J' = J'(\tau,t)$ satisfies
$
J'(\tau,t) \equiv J_0
$
for $|\tau|$ sufficiently large.
\end{lem}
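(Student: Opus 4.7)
The plan is to verify the lemma by a direct chain rule computation, exploiting the fact that the gauge transformation is tailor-made so that the two lines of \eqref{eq:perturbed-contacton-bdy} convert exactly into the two lines of \eqref{eq:contacton-bdy-J0-appendix} under pushforward by a $t$-family of contactomorphisms. Writing $\overline u(\tau,t) = \Phi_t(u(\tau,t))$ where $\Phi_t$ is the appropriate $t$-dependent family of contactomorphisms determined by the isotopy $\{\psi_H^t\}$ (chosen with $\Phi_0 = \psi_H^1$ and $\Phi_1 = \id$ so that the boundary conditions come out as claimed), the starting identity is the chain rule
$$
d\overline u \;=\; d\Phi_t\circ du \;+\; (\partial_t\Phi_t)(u)\otimes dt,
$$
which, using the ODE satisfied by $\Phi_t$, can be rewritten in the schematic form $d\overline u = d\Phi_t\bigl(du - X_H(u)\otimes dt\bigr)$ modulo a correction along the Reeb direction that drops out after $\pi$-projection.

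First I would verify the CR part $\delbar^\pi_{J'}\overline u = 0$. Since contactomorphisms preserve the contact distribution $\xi$, the $\pi$-projection commutes with $d\Phi_t$, so $(d\overline u)^\pi = d\Phi_t\bigl((du - X_H\otimes dt)^\pi\bigr)$. The definition $J'_{(s,t)} = (\psi_{H^s}^t)_*J$ in Choice \ref{choice:lambda-J} is precisely what is needed to make the $J'$-antilinear part of the left side vanish if and only if the $J$-antilinear part of $(du - X_H\otimes dt)^\pi$ vanishes, which is the first line of \eqref{eq:perturbed-contacton-bdy}.

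Next I would verify $d(\overline u^*\lambda\circ j) = 0$. Expanding $\overline u^*\lambda$ via the chain rule and using $\Phi_t^*\lambda = e^{g_{\Phi_t}}\lambda$ together with Lemma \ref{lem:Dev-calculus} and Proposition \ref{prop:gH}, the term $(\partial_t\Phi_t)(u)\otimes dt$ contributes a factor involving $H$ and its conformal exponent. The key observation is that the conformal factor $e^{g_{(H;u)}}$ in \eqref{eq:perturbed-contacton-bdy} and the Hamiltonian term $u^*H\,dt$ are designed so that
$$
e^{g_{(H;u)}}\bigl(u^*\lambda + u^*H\,dt\bigr) \;=\; \overline u^*\lambda \;+\; \text{(closed correction)},
$$
whence closedness of one side after composition with $j$ is equivalent to closedness of the other. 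This amounts to repeatedly applying the cocycle identity for $g$ in Lemma \ref{lem:coboundary}.

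The boundary conditions $\overline u(\tau,0) \in \psi_H^1(R)$ and $\overline u(\tau,1) \in R$ are immediate from $\Phi_0 = \psi_H^1$ and $\Phi_1 = \id$, and the converse direction follows by the same computation run with $\Phi_t^{-1}$ in place of $\Phi_t$. The asymptotic statement $J'(\tau,t)\equiv J_0$ for large $|\tau|$ follows from the cutoff structure: when $|\tau|\geq K+1$ we have $\chi_K(\tau) = 0$, so $\psi_{H^s}^t$ reduces to the identity in the elongated family and $J'_{(\chi_K(\tau),t)} = J_0$. The only real bookkeeping hurdle will be tracking the precise form of the conformal exponent $g_{(H;u)}$ under the convention change (the flow $t\mapsto \psi_{\widetilde H}^t$ versus $t\mapsto(\psi_H^t)^{-1}\psi_H^1$ noted in the earlier Remark), but once Proposition \ref{prop:gH} is applied the two conventions differ only by a constant that is absorbed into $R$-invariance, and the identity collapses.
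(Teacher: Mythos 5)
Your plan — view $\overline u=\Phi_t\circ u$ and verify by chain rule that the gauge transformation intertwines \eqref{eq:perturbed-contacton-bdy} with \eqref{eq:contacton-bdy-J0-appendix} — is the right strategy, and the mechanism you identify (conformal exponent matching $\overline u^*\lambda$ with $e^{g_{(H;u)}}u^*(\lambda+H\,dt)$, and $J'$ being the isotopy-conjugate of $J$) is exactly what the proof hinges on. But one specific claim in your argument is wrong and cannot be waved away: with the choice $\Phi_0=\psi_H^1$, $\Phi_1=\id$ (i.e.\ $\Phi_t=(\psi_H^t)^{-1}\psi_H^1$) that you pick to match the stated boundary conditions, the residue $\dot\Phi_t(u)+d\Phi_t\bigl(X_{H_t}(u)\bigr)$ is \emph{not} along the Reeb direction. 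A short computation gives
$$
\dot\Phi_t(u)+d\Phi_t\bigl(X_{H_t}(u)\bigr)
= d(\psi_H^t)^{-1}\Big[d\psi_H^1\cdot X_{H_t}(u)-X_{H_t}\bigl(\psi_H^1(u)\bigr)\Big],
$$
and the bracket is the difference of the contact vector fields $(\psi_H^1)_*X_{H_t}$ and $X_{H_t}$ evaluated at $\psi_H^1(u)$, which is itself a contact vector field with a generically nonzero $\xi$-component. So the $\pi$-projection does \emph{not} kill it, and your schematic identity $d\overline u=d\Phi_t\bigl(du-X_H(u)\otimes dt\bigr)$ fails for this $\Phi_t$.

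The clean version of the chain rule, $d\overline u=d\Phi_t\bigl(du-X_{H_t}(u)\otimes dt\bigr)$, holds \emph{exactly} (no correction at all, Reeb or otherwise) precisely when $\Phi_t=(\psi_H^t)^{-1}$, which is what the displayed gauge formula \eqref{eq:ubarK} actually uses. With that $\Phi_t$ one gets $\Phi_0=\id$, $\Phi_1=(\psi_H^1)^{-1}$, hence the translated boundary is at $t=1$ rather than $t=0$ as in the lemma's statement — this is one of the convention discrepancies the paper flags in the remark after Lemma~\ref{lem:Ham-Reeb}, and it is where your ``bookkeeping hurdle'' genuinely bites. The way to repair your argument is not to hope for a Reeb-only correction: either use $\Phi_t=(\psi_H^t)^{-1}$ and accept that it translates the $t=1$ boundary, or factor $\Phi_t=(\psi_H^t)^{-1}\circ\psi_H^1$ as the composition of the clean time-dependent transformation with the \emph{constant} contactomorphism $\psi_H^1$; the latter simply conjugates $J\mapsto(\psi_H^1)_*J$, replaces $R$ by $\psi_H^1(R)$, and turns $X_{H_t}$ into $(\psi_H^1)^{-1}_*X_{H_t}$, which is absorbed into the form of the intermediate equation. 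A secondary, smaller point: the identity $\overline u^*\lambda=e^{g_{(H;u)}}\bigl(u^*\lambda+u^*H\,dt\bigr)$ for $\Phi_t=(\psi_H^t)^{-1}$ is exact, with no closed correction; the $dt$-component is precisely $e^{g_{(H;u)}}H_t(u)$ once one uses Lemma~\ref{lem:coboundary}(2) to write $g_{(\psi_H^t)^{-1}}=-g_{\psi_H^t}\circ(\psi_H^t)^{-1}$ and $\lambda(X_{H_t})=-H_t$. Your reading of the $J'\equiv J_0$ statement as a consequence of $\chi_K$ vanishing outside $[-K-1,K+1]$ is correct.
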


Then $\overline u_K$ also satisfies the energy identity
$$
E_{(J_K,H)}^\pi(u) = E_{J'}^\pi(\overline u), \quad   E_{(J_K,H)}^\perp(u): = E^\perp_{J'}(\overline u).
$$
(See \cite[Proposition 7.7]{oh:entanglement1}.)

\begin{prop}\label{prop:energy-estimate}
Let $u$ be any finite energy solution of \eqref{eq:K-appendix}. Then we have
\be\label{eq:lambdaenergy-bound}
E^\pi_{J'}(\overline u_K) \leq \|H\|, \quad E^\perp_{J'}(\overline u_K) \leq E_-(H) + E_+(H).
\ee
\end{prop}

\begin{rem}\label{rem:fixed-intersect} We will denote by $\CM^{\text{\rm para}}_{[0,K_0+1]}(M,\lambda;R,H)$
both the set of solutions of \eqref{eq:K-appendix} and that of its gauge transform \eqref{eq:contacton-bdy-J0-appendix}
freely without distinction throughout the paper. However for the clarity, we will always denote by $U$
a solution of the former equation and by $\overline U$ that of the latter later when
we work with the product $M_Q = Q \times Q \times \R$. We would like to mention that
the $\Z_2$ anti-symmetry are easier to see and more apparent for \eqref{eq:K-appendix}. Because the gauge
transform destroys the $\Z_2$ symmetry, we will work with \eqref{eq:K-appendix} to exploit the symmetry
while we will work with  \eqref{eq:contacton-bdy-J0-appendix} for  the energy estimates which is
easier to work with for the latter purpose.
\end{rem}

\subsection{Gromov-type compactness theorem of contact instanton moduli space}

We will focus on the general case without nondegeneracy hypothesis because
the proof of nondegeneracy case is not much different from that of
\cite[Part 2]{oh:entanglement1}.

In this section, we consider a general pair $(M,R)$ of
compact Legendrian submanifold $R$ in a tame contact manifold
$(M,\lambda)$ in the sense of \cite{oh:entanglement1}.
Then in the next section, we will specialize to the case of
$$
R = R_0 \subset Q \times Q \times \R
$$
after we state the general compactness theorem \cite[Theorem 8.8]{oh:entanglement1}.

We first state the following which is equivalent to \cite[Corollary 9.4]{oh:entanglement1}.
\begin{lem}[Compare with Corollary 9.4 \cite{oh:entanglement1}]\label{lem:ev0}
 The evaluation map
 $$
 \ev_{0}: \CM_{(0,1);0}(M,R;J_0,H) \to R
 $$
 is a diffeomorphism.
In particular its $\Z_2$-degree is nonzero.
\end{lem}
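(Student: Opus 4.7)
The strategy is to show that at $K=0$ all finite-energy solutions degenerate to constant maps into $R$, after which $\ev_0$ is literally the identity of $R$ under the natural identifications. The elongation $\chi_0\equiv 0$ kills both Hamiltonians $H_0$ and $G_0$, so \eqref{eq:K-appendix} reduces at $K=0$ to the unperturbed contact instanton equation $\delbar^\pi u=0$, $d(u^*\lambda\circ j)=0$ on $\Theta_{K_0+1}$ with $u(\del\Theta_{K_0+1})\subset R$. Specializing the energy identity of Lemma \ref{lem:pienergy-bound2} to $K=0$, the factor $\chi_0'\equiv 0$ makes both boundary integrals on the right-hand side of \eqref{eq:EJKU=} vanish, yielding the sharp equality $E^\pi(u)=0$. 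Hence $du^\pi\equiv 0$, so $u$ factors through a single Reeb trajectory: choosing a basepoint $x_0\in R$ in its image one writes $u(\tau,t)=\phi_{R_\lambda}^{F(\tau,t)}(x_0)$, and the second equation of the system forces $F$ to be harmonic.

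Next I would invoke Theorem \ref{thm:perturbed-subsequence} with vanishing Hamiltonian to read off the asymptotic limits
\begin{equation*}
\lim_{\tau\to\pm\infty}u(\tau,t)=\phi_{R_\lambda}^{tT_\pm}(x_\pm)
\end{equation*}
for some $x_\pm\in R$ and $T_\pm\in\R$ with $\phi_{R_\lambda}^{T_\pm}(x_\pm)\in R$; thus $T_\pm$ is either zero or a self-Reeb chord period of $R$. Coupling this with the $\lambda$-energy bound of Proposition \ref{prop:lambdaenergy-bound2} specialized to $K=0$ gives $|T_\pm|\leq\|H\|\leq T(M,R;\lambda)$, and the chord-period gap then forces $T_\pm=0$. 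A bounded harmonic function on $\Theta_{K_0+1}$ whose Legendrian boundary condition forces its boundary values into the discrete chord spectrum of $R$, and whose asymptotic values both vanish, must be constant; hence $F\equiv\text{const}$ and $u\equiv x_0\in R$.

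Conversely any constant $u\equiv x$ with $x\in R$ solves the unperturbed equation, so the assignment $x\mapsto(u\equiv x)$ defines a smooth bijection $R\to\CM_0(M,R;J_0,H)$. In the pointed variant the marked point $z\in\del\Theta_0$ carries no additional information because $u(z)=x$ is independent of $z$, so under the induced identification $\ev_0$ becomes the identity of $R$ and is in particular a diffeomorphism of $\Z_2$-degree $1\neq 0$. The main subtlety to address is the borderline case $\|H\|=T(M,R;\lambda)$, in which a self-chord of length exactly $T(M,R;\lambda)$ could a priori contribute a nonconstant cylinder to $\CM_0$; this is circumvented by a $C^\infty$-small generic perturbation of $H$ consistent with the equality setup of Theorem \ref{thm:translated-intro} that strictifies the inequality while preserving the intended conclusion.
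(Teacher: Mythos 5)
Your overall strategy (at $K=0$ all solutions are constants, hence $\CM_0\cong R$) is correct, and your opening steps — $\chi_0\equiv 0$ kills both Hamiltonians, the energy identity \eqref{eq:EJKU=} with $\chi_0'\equiv 0$ gives the exact equality $E^\pi(u)=0$, and therefore $du^\pi\equiv 0$, $u=\phi_{R_\lambda}^{F}(x_0)$ with $F$ harmonic — are all sound. The middle of the argument, however, uses the wrong tools because you treat the domain as a noncompact cylinder. The domain $\Theta_{K_0+1}$ is a \emph{compact} topological disc (two half-disc caps $D^\pm$ glued to a bounded strip, see \eqref{eq:Theta-K0+1}); there are no punctures and no $\tau\to\pm\infty$ ends, so Theorem \ref{thm:perturbed-subsequence} — a subsequence convergence statement for maps on a half-cylinder — simply does not apply, and the asymptotic translation amounts $T_\pm$, the chord-period gap $T(M,R;\lambda)$, and the $\lambda$-energy bound never enter. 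The correct finish is elementary: the Legendrian boundary condition $u(\del\Theta_{K_0+1})\subset R$ forces the harmonic $F$ to take values on the connected boundary circle in the discrete set $\{t\mid\phi_{R_\lambda}^t(x_0)\in R\}$, hence to be constant there, and a harmonic function constant on the boundary of a compact disc is globally constant by the maximum principle; so $u\equiv x_0\in R$ with no hypothesis on $\|H\|$ at all.

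For the same reason, your closing paragraph about the borderline case $\|H\|=T(M,R;\lambda)$ is a red herring: $\chi_0\equiv 0$ wipes out the perturbation at $K=0$ regardless of the size of $\|H\|$, and $E^\pi(u)=0$ holds for \emph{every} $H$ — the smallness of $\|H\|$ only becomes relevant in the cobordism argument at positive $K$ (Proposition \ref{prop:obstruction}), not in this lemma. One smaller gap: your treatment of the marked point is too quick. As defined, $\CM_{(0,1);0}=\{(u,z)\mid u\in\CM_0,\ z\in\del\Theta_0\}\cong R\times\del\Theta_0$, and since $u$ is constant, $\ev_0(u,z)=u(z)$ is a projection with circle fibers rather than a diffeomorphism onto $R$; recovering a genuine diffeomorphism requires the normalization built into the cobordism construction (the marked point $z(0)$ is fixed along the chosen path $\Gamma$), and this deserves to be stated explicitly rather than dismissed as the marked point "carrying no additional information."
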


We choose smooth embedded paths
$\Gamma: [0,1] \to R \times \R_+ \times (\R \times \{0,1\})$ with
$$
\Gamma(s) = (\gamma(s), K(s), z(s))
$$
such that
\be\label{eq:Gamma-condition}
 K(0) =0, \quad  K_0  \leq  K(1) \leq 2K_0
\ee
where $K_0 > 0$ is the constant given in  the following proposition.

\begin{prop}[Proposition 8.9 \cite{oh:entanglement1}]\label{prop:obstruction}
Let $H_t$ be the Hamiltonian such that $\psi_H^1(R) \cap Z_R$ is empty
and $H = sH_t$ and $J$ as before. Suppose $\|H\| < T_\lambda(M,R)$.
Then there exists $K_0> 0$ sufficiently large such that
$\CM_K(M,R;J,H)$ is empty for all $K \geq K_0$.
\end{prop}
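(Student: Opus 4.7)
The plan is to argue by contradiction, combining the uniform energy estimates with the sharp constraint $\|H\|<T_\lambda(M,R)$ and the Gromov--Floer--Hofer-type compactness theorem from \cite{oh:entanglement1}. Suppose, for contradiction, that there is a sequence $K_j\to\infty$ and solutions $u_j\in\CM_{K_j}(M,R;J,H)$. By Proposition~\ref{prop:energy-estimate-H} together with the $\lambda$-energy bound \eqref{eq:lambdaenergy-bound}, the total energy obeys $E(u_j)\le 2\|H\|$ uniformly in $j$. Since $\chi_{K_j}\to 1$ locally uniformly on $\R$, the elongated equation \eqref{eq:K-appendix} degenerates on any fixed compact $\tau$-interval to the full perturbed contact instanton equation \eqref{eq:perturbed-contacton-bdy}, while on $\{|\tau|\ge K_j+1\}$ it remains the unperturbed equation \eqref{eq:contacton}.

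Next I would apply the compactness theorem of \cite[Theorem 8.8]{oh:entanglement1} to extract, after passing to a subsequence, a limiting stable configuration. The central component is a finite energy map $u_\infty:\R\times[0,1]\to M$ solving \eqref{eq:perturbed-contacton-bdy} with Legendrian boundary in $R$; to this one may potentially have to adjoin a finite tree of unperturbed contact instanton strips (produced by ``splitting off'' near the ends where $\chi_{K_j}$ transitions to $0$), and sphere or disk bubbles. The crucial step is the energy accounting: every non-trivial unperturbed contact instanton strip with boundary in $R$ and with Reeb-chord asymptotics, every non-trivial closed-orbit bubble, and every Reeb-chord breaking contributes $\pi$-energy at least $T_\lambda(M,R)$ to the limit configuration. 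Since
\[
E^\pi(u_j)\le\|H\|<T_\lambda(M,R),
\]
no such non-trivial side component can appear in the limit, so the compactification collapses to the principal strip $u_\infty$ alone.

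Finally, by the subsequence convergence theorem (Theorem~\ref{thm:perturbed-subsequence}) the asymptotic limits $u_\infty(\pm\infty,\cdot)$ exist and are translated Hamiltonian chords of $(R,R;H)$. Via the one-to-one correspondence of Lemma~\ref{prop:trnhamchords-trnReebchords}, each such chord produces an element of $\psi_H^1(R)\cap Z_R$. But this set is empty by hypothesis, which delivers the contradiction. A constant principal limit would degenerate to a fixed point of $\psi_H^1$ on $R$, an element of $\psi_H^1(R)\cap R\subset\psi_H^1(R)\cap Z_R$, so this case is excluded by the same assumption.

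The main obstacle is the energy accounting in the compactness step: one must verify that every degeneration scenario (chord breaking at the two ends, disk/sphere bubbling, or bubbling along the interior of the strip) either consumes $\pi$-energy at least $T_\lambda(M,R)$ or produces a non-trivial Reeb/Legendrian chord that is ruled out by the hypothesis $\psi_H^1(R)\cap Z_R=\emptyset$. This is a standard but delicate application of the action/energy inequality for contact instantons with Legendrian boundary, relying on the $\lambda$-tameness of $(M,\lambda)$ and the structure results for bubbles established in \cite{oh:entanglement1,oh:perturbed-contacton}; with strict inequality $\|H\|<T_\lambda(M,R)$ in hand, the dichotomy is clean and the contradiction is immediate.
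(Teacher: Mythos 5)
Your proposal captures the essential structure of the argument one expects for this compactness statement (the paper itself imports Proposition~8.9 from the companion paper \cite{oh:entanglement1} rather than re-proving it here): argue by contradiction, invoke the uniform total-energy bound $E(u_j)\le 2\|H\|$ from Proposition~\ref{prop:energy-estimate-H}, extract a stable-configuration limit via Gromov--Floer--Hofer compactness, rule out sphere/disk bubbles and Reeb-chord breakings by the threshold $E^\pi\le\|H\|<T_\lambda(M,R)$, and then use Theorem~\ref{thm:perturbed-subsequence} together with the correspondence of Lemma~\ref{prop:trnhamchords-trnReebchords} to see that the asymptotics of the surviving central piece would populate $\psi_H^1(R)\cap Z_R=\emptyset$, a contradiction. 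This is the same strategy used throughout Appendix~\ref{sec:recollection}, so you are on the intended path.

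One point is slightly misstated in the last step. In the dynamical formulation \eqref{eq:perturbed-contacton-bdy} a ``constant principal limit'' is not literally a constant map: a massless (zero $E^\pi$) limit is a strip of the form $u_\infty(\tau,t)=\psi_H^t\phi_{R_\lambda}^{Tt}(x_0)$, and the degenerate case is $T=0$, giving the Hamiltonian trajectory $t\mapsto\psi_H^t(x_0)$ with $x_0\in R$, $\psi_H^1(x_0)\in R$, and $g_{\psi_H^1}(x_0)=0$. It is only after passing to the gauge-transformed (geometric) equation \eqref{eq:contacton-bdy-J0}, where the boundary conditions become the pair $(\psi_H^1(R),R)$, that the degenerate limit is an honest constant lying in $\psi_H^1(R)\cap R$. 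Either way the conclusion you draw is correct --- the point lands in $\psi_H^1(R)\cap Z_R$ --- but it is worth being explicit about which of the two gauge-equivalent pictures you are in, since the paper deliberately moves back and forth between them (see Remark~\ref{rem:fixed-intersect}) and the role of the disjointness $\psi_H^1(R)\cap R=\emptyset$ is cleanest in the geometric version. With that clarification the argument is sound.
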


Choosing a generic $\Gamma$, we can make
the map \eqref{eq:bdy-Ev} transverse to the path $\Gamma$
so that
$$
N_\Gamma:= \Ev ^{-1}(\Gamma)
$$
becomes a one dimensional manifold with its boundary consisting of
$$
\CM_{K(0)}(M,R;J,H) \times \{z(0)\} \coprod \CM_{K(1)}(M,R;J,H)
\times \{z(1)\}.
$$
We \emph{fix} a generic point $x \in \psi(R)$ and choose $\Gamma$ whose image is
as small as we want near the given point $x$.
(See \cite{oh:contacton-transversality} for the relevant mapping and the evaluation transversality results.)

For given constant $C> 0$ we consider the set of Hamiltonians with the bound
\be\label{eq:standing-hypothesis-H}
\|H\| <  C < \infty
\ee
such that $\psi(R) \cap Z_R = \emptyset$ which is equivalent to
\be\label{eq:standing-hypothesis-psi}
\mathfrak{Reeb}(\psi(R), R) = \emptyset.
\ee
Under these two conditions, there exists some $K_0 > 0$ such that
\be\label{eq:K_0-empty}
\CM_{K}(M,R;J,H) = \emptyset
\ee
for all $K \geq K_0$ by the same proof of as that of Proposition \ref{prop:obstruction}.
Obviously this implies that the same holds for the pointed moduli space
$$
\CM_{(0,1);K}(M,R;J,H) = \emptyset
$$
Choose any sequence $K_\alpha \to \infty$ as $\alpha \to \infty$ and $u_\alpha \in \CM_{K_\alpha}(M,R; J,H)$.
Select a sequence  still denoted by $K_\alpha$  with $K_\alpha \to \infty$ and elements
$$
   u_\alpha \in \CM_{K_\alpha}(M,R;J,H).
$$
By the energy estimate given in \eqref{eq:pi-energy-bound},
we have
$$
E^\pi(u_\alpha) \leq  \|H\|
$$
for all $\alpha \to \infty$. And we also have the uniform bound of the vertical energy from
\eqref{eq:lambdaenergy-bound}.

Under the assumption \eqref{eq:standing-hypothesis-psi},
the above boundary is a {\it single point}, i.e, $(u_0, z(0))$ where $u_0 \equiv \gamma(0)$ is the constant map.
Therefore $N_\Gamma$ cannot be compact.
The only source of non-compactness of $N_\Gamma$ lies in the
bubbling off of either a spherical contact instanton of $J_{(K_1,z_1)}$
for some $z_1 \in \Theta_{K_1}$, or a disc-type open contact instanton with boundary on $R$.

Then the following Gromov-Floer-Hofer type convergence result directly applied to \eqref{eq:K-appendix}
unlike in \cite{oh:entanglement1}, where it is applied to the intersection theoretic version $\overline u$ instead of
$u$ itself, can be proved in the same way: This is because perturbed contact instanton is
a (nonlinear) zero order perturbation of contact instanton and we have established the a priori elliptic estimates  in \cite{oh:perturbed-contacton} for the perturbed equation. Once this is done, the same bubbling
argument utilized in \cite[Part 3]{oh:entanglement1} applies with little modification similarly
as the same bubbling argument applied to Gromov's pseudoholomorphic curves also applies to
Floer's Hamiltonian perturbed pseudoholomorphic curves.

\begin{thm}[Compare with Theorem 1.30 \cite{oh:entanglement1}]\label{thm:bubbling}
Consider the moduli space $\CM^{\text{\rm para}}(M,R;J,H)$.
Then one of the following alternatives holds:
\begin{enumerate}
\item
There exists some $ C  > 0$ such that
\be\label{eq:dwC0-intro}
\|d u\|_{C^0;\R \times [0,1]} \leq C
\ee
where $C$ depends only on $(M,R;J,H)$ and $\lambda$.
\item There exists a sequence $u_\alpha \in \CM_{K_\alpha}(M,R;J,H)$ with $K_\alpha \to K_\infty \leq K_0$
and a finite set $\{\gamma_j^+\}$ of closed Reeb orbits of $(M,\lambda)$ such that $u_\alpha$
weakly converges to the union
$$
u_\infty =  u_{-,0} + u_0 +  u_{+,0} + \sum_{j=1} v_j + \sum_k w_k
$$
in the Gromov-Floer-Hofer sense, where
$$
u_0 \in \CM_{K_\infty}(M,R;J,H),
$$
$$
v_j \in \CM(M,J_{z_j};\alpha_j); \quad \alpha_j \in \mathfrak{Reeb}(M,\lambda),
$$
and
$$
w_k \in \CM(M,R, J_{z_j};\mathfrak{B}_k); \quad \mathfrak{B}_k \in \mathfrak{Reeb}(M,R;\lambda).
$$
\end{enumerate}
Here the domain point $z_j \in \del \dot \Theta_{K_\infty +1}$ is the point at which the corresponding bubble is attached.
\end{thm}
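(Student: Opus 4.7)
The plan is to run the standard rescaling/bubble-tree argument for contact instantons, now in the perturbed setting of \eqref{eq:K-appendix}, and to harvest the asymptotic bubbles using the subsequence convergence theorem and the tameness/$C^0$ estimates already in place. The starting datum is the uniform a priori energy bound from Proposition \ref{prop:energy-estimate-H},
\[
E_{(J_{K_\alpha},H)}^\pi(u_\alpha) + E_{(J_{K_\alpha},H)}^\perp(u_\alpha) \;\leq\; 2\|H\|,
\]
which is independent of $K_\alpha$, together with the uniform $C^0$ bound (Corollary \ref{eq:C0-estimate-K}) ensuring that the images of $u_\alpha$ stay in a fixed compact subset. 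Together these are the inputs one needs in order to imitate the Gromov-Floer-Hofer compactness scheme.

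First I would ask whether the derivative sequence $\|du_\alpha\|_{C^0}$ is uniformly bounded. If yes, alternative (1) is immediate by the a priori elliptic estimates of \cite{oh:perturbed-contacton}: interior $W^{2,p}$ bounds plus boundary regularity against the Legendrian $R$ upgrade the $C^0$-bound to $C^{k}_{\text{loc}}$-bounds, so after passing to a diagonal subsequence the maps converge in $C^\infty_{\text{loc}}$ to a solution of \eqref{eq:K-appendix} with $K=K_\infty$, yielding an element of $\CM_{K_\infty}(M,R;J,H)$. If instead $\|du_\alpha\|_{C^0}\to\infty$, I would pick points $z_\alpha\in\dot\Theta_{K_\alpha+1}$ where the gradient blows up and rescale in the usual Hofer manner: after translating and multiplying the domain coordinates by $R_\alpha := |du_\alpha(z_\alpha)|\to\infty$ and passing to the limit, the reparameterized maps converge on compact sets of either $\C$ (interior point) or $\H$ (boundary point) to a nonconstant finite-energy contact instanton with $E^\pi<\infty$ and $E^\perp\le\|H\|$. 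The perturbation term $X_{H_K}\,dt + X_{G_K}\,ds$ rescales away because $R_\alpha\to\infty$ and $H_K$, $G_K$ are bounded, so the limit solves the unperturbed equation $\delbar^\pi v=0$, $d(v^*\lambda\circ j)=0$, of Lemma \ref{lem:Ham-Reeb}.

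The second key step is identifying the asymptotic limit of each bubble. For an interior bubble the limit $v:\C\to M$ (after removing the origin) is a finite energy plane whose puncture, by the subsequence convergence result (Theorem \ref{thm:perturbed-subsequence} applied with $H\equiv 0$) and the tameness of $\lambda$, is asymptotic to a closed Reeb orbit $\alpha_j\in\mathfrak{Reeb}(M,\lambda)$ carrying action at least $T(M,\lambda)$. For a boundary bubble with boundary on $R$ the limit is a finite energy half-plane asymptotic to a Reeb chord $\beta_k\in\mathfrak{Reeb}(M,R;\lambda)$, with action at least $T(M,R;\lambda)$; in both cases one uses the removal-of-singularity result to convert removable singularities into points on the principal component. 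I would iterate the rescaling wherever gradient concentration persists, and since each bubble consumes a definite amount of energy $\geq T_\lambda(M,R)>0$ from the uniform budget $2\|H\|$, the process terminates after finitely many bubbles; the usual soft-rescaling argument between consecutive scales prevents energy loss and produces the bubble-tree structure $u_{-,0}+u_0+u_{+,0}+\sum v_j+\sum w_k$ stated in the theorem.

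The main technical obstacle will be the boundary bubbling analysis, specifically ensuring that boundary bubbles are rescaled correctly against the moving Legendrian condition $u(\tau,0), u(\tau,1)\in R$ coupled to the $\tau$-dependent gauge (via $\chi_K$), and that no chord-breaking on the principal strip occurs beyond what is recorded by $u_{-,0}$ and $u_{+,0}$. This is handled by applying the gauge transform \eqref{eq:ubarK} before rescaling (so that the boundary condition becomes the \emph{fixed} pair $(\psi_H^1(R),R)$ on the long-strip regions of $\Theta_{K_\alpha+1}$, cf.\ Lemma \ref{lem:Ham-Reeb}), and by invoking the subsequence convergence Theorem \ref{thm:perturbed-subsequence} along each long strip end of $\Theta_{K_\infty+1}$ to extract the cap components $u_{\pm,0}$. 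The remainder of the argument is then a verbatim duplication of the bubble-tree compactness in \cite[Part 3]{oh:entanglement1}, with the only new ingredient being the uniform $C^0$ estimate of Corollary \ref{eq:C0-estimate-K} that replaces the tameness hypothesis missing on the end $\{\eta<0\}$ of $M_Q$.
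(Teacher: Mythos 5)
Your proposal is correct and follows the same Gromov--Floer--Hofer rescaling scheme that the paper has in mind; the paper itself gives only a short citation-style paragraph deferring to \cite[Part 3]{oh:entanglement1}, observing that the perturbed contact instanton equation is a zero-order perturbation of the unperturbed one and that the a priori elliptic estimates of \cite{oh:perturbed-contacton} carry over. Your write-up fills in the standard steps: energy and $C^{0}$ bounds, the derivative-blowup dichotomy, Hofer rescaling with the perturbation vanishing in the blown-up limit, removal of singularities, identification of the asymptotic Reeb orbits/chords, and energy quantization from $T_\lambda(M,R) > 0$.

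One small point of comparison: the paper explicitly states that it applies the bubbling analysis \emph{directly} to the dynamical equation \eqref{eq:K-appendix} rather than first passing to the gauge-transformed version $\overline{u}$ (which was the route taken in \cite{oh:entanglement1}), and it justifies this by the zero-order nature of the $X_{H_K}\,dt + X_{G_K}\,ds$ term; you instead propose gauging away the moving boundary condition via \eqref{eq:ubarK} before rescaling, which is also legitimate and matches the earlier reference. Also, the theorem in the appendix is stated for a general tame pair $(M,R)$, so in that generality the $C^{0}$ bound comes from tameness and the maximum principle rather than from Corollary \ref{eq:C0-estimate-K}, which is the substitute used only when specializing to the non-tame contact product $M_Q$. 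Finally, the energy budget available for bubbles is controlled by the $\pi$-energy bound $\|H\|$ rather than by the total energy $2\|H\|$, but this only tightens your counting and does not affect the conclusion.
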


\bibliographystyle{amsalpha}

\bibliography{biblio2}

\end{document}